\newtheorem{theorem}{Theorem}[section]
\newtheorem{lemma}[theorem]{Lemma}
\newtheorem{proposition}[theorem]{Proposition}
\newtheorem{corollary}[theorem]{Corollary}
\newtheorem{definition}[theorem]{Definition}
\newtheorem{remark}[theorem]{Remark}
\newtheorem{example}[theorem]{Example}
\newcommand{\R}{\mathbb{R}}
\newcommand{\F}{\mathcal{F}}
\newcommand{\D}{\mathcal{D}}
\newcommand{\Id}{\mathrm{id}}
\renewcommand{\d}{\mathrm{d}}
\renewcommand{\epsilon}{\varepsilon}
\newcommand*{\mailto}[1]{\href{mailto:#1}{\nolinkurl{#1}}}
\numberwithin{equation}{section}
\begin{document}
\title[Lipschitz stability for $\alpha$-dissipative Hunter--Saxton]{Existence and Lipschitz stability for $\alpha$-dissipative solutions of the two-component Hunter--Saxton system }
\allowdisplaybreaks

\author[K. Grunert]{Katrin Grunert}
\address{Department of Mathematical Sciences\\
  NTNU\\Norwegian University of Science and Technology\\
  7491 Trondheim\\ Norway}
\email{\mailto{katring@math.ntnu.no}}
\urladdr{\url{http://www.math.ntnu.no/~katring/}}

\author[A. Nordli]{Anders Nordli}
\address{Department of Mathematical Sciences\\ NTNU\\ Norwegian University of Science and Technology\\ NO-7491 Trondheim\\ Norway}
\email{\mailto{andenors@math.ntnu.no}}

\subjclass[2010]{Primary: 35Q53, 35B35; Secondary: 37L05, 37L15}
\keywords{two-component Hunter--Saxton system, $\alpha$-dissipative solutions, Lipschitz stability}
\thanks{Research supported by the grant {\it Waves and Nonlinear Phenomena (WaNP)} from the Research Council of Norway.}

\begin{abstract}
We establish the concept of $\alpha$-dissipative solutions for the two-component Hunter--Saxton system under the assumption that ei\-ther $\alpha(x)=1$ or $0\leq \alpha(x)<1$ for all $x\in \R$. Furthermore, we investigate the Lipschitz stability of solutions with respect to time by introducing a suitable parametrized family of metrics in Lagrangian coordinates. This is necessary due to the fact that the solution space is not invariant with respect to time.
\end{abstract}

\maketitle

\section{Introduction}\label{section intro}

In this paper we investigate the existence and Lipschitz stability of solutions of the initial value problem of the two-component Hunter--Saxton (2HS) system on the line, which is given by
\begin{subequations}
\label{hunter-saxton-system}
\begin{align}
u_t(x,t)+uu_x(x,t) &= \frac 14\bigg(\int_{-\infty}^x(u_x(z,t)^2+\rho(z,t)^2)\:\d z\nonumber\\ &\qquad\qquad\qquad  -\int_x^{\infty}(u_x(z,t)^2+\rho(z,t)^2)\:\d z\bigg),\\
\rho_t(x,t) +(u\rho)_x(x,t) &= 0.
\end{align}
\end{subequations}
It has been derived by Pavlov as a model of non-dissipative dark matter \cite{P}, but can also be viewed as a high frequency limit of the two-component Camassa--Holm system describing water waves \cite{DP98, W10}.
Moreover, it is a generalization of the well known Hunter--Saxton equation
\begin{equation}
\label{hunter-saxton}
u_t(x,t)+uu_x(x,t) = \frac 14\bigg(\int_{-\infty}^x u_x(z,t)^2\:\d z-\int_x^{\infty}u_x(z,t)^2\:\d z\bigg),
\end{equation}
which has been introduced by Hunter and Saxton as a model of the director field of a nematic liquid crystal \cite{HS}. 

Solutions of the 2HS system develop singularities in finite time, even for smooth initial data \cite{N,W10}. The appearance of singularities, known as wave breaking, means that $u_x$ tends pointwise to $-\infty$ while $u$ remains bounded and continuous. The phenomenon is illustrated in the following example.
\begin{example}
\label{example solution}
Let $t\in [0,2)$ and let the functions $u$ and $\rho$ be defined by
\begin{align*}
u(x,t) &=
	\begin{cases}
	-\frac 12t+1,& x\leq -(1-\frac12 t)^2,\\
	-\frac{1}{-\frac 12t+1}x,& -(1-\frac12 t)^2\leq x \leq 0,\\
	\frac{t}{\frac 12t^2+2}x, &	0\leq x \leq \frac 14 t^2+1,\\
	\frac 12 t, & \frac 14t^2+1 \leq x,
	\end{cases}\\
\rho(x,t) &=
	\begin{cases}
	0,& x\leq 0,\\
	\frac{1}{\frac 14t^2+1}, &0< x \leq \frac 14 t^2+1,\\
	0, & \frac 14 t^2+1<x.
	\end{cases}
\end{align*}
Then $(u,\rho)$ is a weak solution of \eqref{hunter-saxton-system} for $t\in[0,2)$. Note that $u_{x}(0,t)\rightarrow -\infty$ as $t\rightarrow 2^-$, which in particular means that wave breaking occurs. We can define the energy of the system at time $t$ to be given by
\begin{equation}
\int_{\R} \big(u_x^2(x,t)+\rho^2(x,t)\big)\:\d x = 2,
\end{equation}
which is constant in time, even up to the point $t=2$. The energy contained in the interval $-\frac 14 t^2 + t -1\leq x\leq 0$, given by $\int_{-\frac 14 t^2 + t -1}^0 (u_x^2+\rho^2)\:\d x = 1$, is also conserved. Thus a finite amount of energy is being concentrated in a single point as $t\rightarrow 2^-$.
\begin{figure}
\includegraphics[width=8cm]{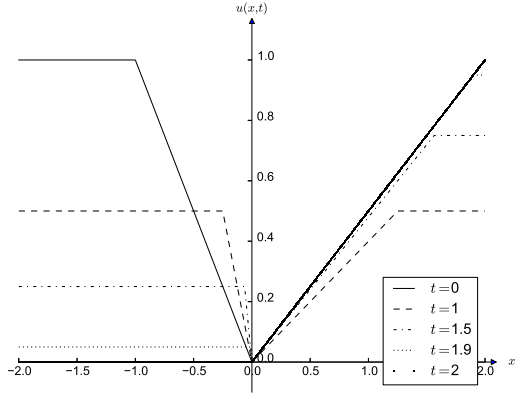}
\caption{A plot of $u$ in Example \ref{example solution} for $t=0,1,1.5,1.9,2$.}
\label{fig example intro}
\end{figure}
\end{example}
As seen in Example \ref{example solution} part of the energy $\int_\R(u_x^2+\rho^2)\:\d x$ is concentrated at a single point at wave breaking. This illustrates that the energy density in general is not absolutely continuous, but a positive, finite Radon measure. Nevertheless, the total energy remains constant in time as $t\rightarrow 2^-$. Hence $u_x$ and $\rho$ remain in $L^2(\R)$ even if $u_x$ tends to minus infinity and the energy can be described by the cumulative distribution function of a positive, finite Radon measure.

To continue solutions past wave breaking is a delicate issue since weak solutions are not unique afterwards. A property that the 2HS system shares with the Hunter--Saxton equation \cite{BC,BHR,BZZ,HS,HZ,ZZ}, the Camassa--Holm equation \cite{BC07cons,BC07diss,CH,HR07}, and the two-component Camassa--Holm system \cite{GHR}. To be more precise, there are infinitely many ways to extend a local solution to a global one past wave breaking by manipulating the concentrated energy at breaking time. In particular, there has been shown much interest in two classes of weak solutions, namely dissipative solutions and conservative solutions. On the one hand one could ignore the part of the energy that concentrates on a set of measure zero in the continuation, which yields dissipative solutions. On the other hand one could continue by letting the concentrated energy back into the system, which would give conservative solutions. In practice that would amount to defining $u$ and $\rho$ by the formulas in Example \ref{example solution} even for $t>2$. Thus it is essential to include the energy in our sets of variables, when constructing global conservative solutions. Existence of dissipative solutions has been proven in \cite{W10}, while existence and stability of conservative solutions has been shown in \cite{N}. Here we generalize and provide a unified approach to the notions of dissipative and conservative weak solutions.

We will work within the novel concept of $\alpha$-dissipative solutions, which has been introduced in \cite{GHR15} in the context of the two-component Camassa--Holm system for the case $\alpha$ being constant. Here we consider a more general case. Given a Lipschitz continuous function of space, $\alpha:\R\to [0,1]$, an $\alpha$-dissipative solution will dissipate an $\alpha$-fraction of the energy concentrated on a set of measure zero at wave breaking. Dissipative and conservative solutions are recovered as special cases with $\alpha = 1$ and $\alpha = 0$, respectively. Here we will construct $\alpha$-dissipative solutions and prove Lipschitz stability of the constructed solutions. 

We will solve the 2HS system by a generalized method of characteristics. The method will be similar to the one for conservative solutions \cite{N} and dissipative solutions \cite{W10}. As long as solutions $(u,\rho)$ stay smooth we can define the corresponding Lagrangian variables $(y,U,r,V)$ by
\begin{equation}
y_t(\xi,t) = u(y(\xi,t),t),\\
\end{equation}
and
\begin{align}
U(\xi,t) &= u(y(\xi,t),t),\\
V(\xi,t) &= \int_{-\infty}^{y(\xi,t)}u_x(x,t)^2+ \rho(x,t)^2\:dx = 0,\\
r(\xi,t) &= \rho(y(\xi,t),t)y_\xi(\xi,t),
\end{align}
where $\xi$ is a parameter that determines the initial value of the characteristics. Then
\begin{align}
U_t(\xi,t) &= \frac 12 V(\xi,t) - \frac 14\lim_{\xi\rightarrow\infty}V(\xi,t),\\
V_t(\xi,t) &= 0,\\
r_t(\xi,t) &= 0,
\end{align}
Wave breaking happens precisely where different characteristic curves $y(\xi,t),y(\xi',t)$ meet, and one can choose which solution to obtain by manipulating the energy density $V_\xi(\xi,t)$ at wave breaking. The time of wave breaking $\tau(\xi)$ can be determined initially and is given by
\begin{equation}
\tau(\xi) = -2\frac{y_{0,\xi}(\xi)}{U_{0,\xi}(\xi)}.
\end{equation}
 
To obtain dissipative solutions one sets $V_\xi(\xi,t) = 0$ after wave breaking, which implies that the characteristics stick together after they meet. One can solve the resulting system explicitly as shown by Wunsch \cite{W10}. If one on the other hand keeps $V_\xi(\xi,t)$ constant across wave breaking one gets conservative solutions \cite{N}. For dissipative solutions one can always choose $y(\xi,0) = \xi$, while for conservative solutions this choice is not possible since energy can then be concentrated on a set of measure zero initially. The main difficulty with $\alpha$-dissipative solutions compared to conservative or dissipative solutions is that it is not known \textit{a priori} how the energy density is manipulated at wave breaking. After wave breaking we have $V_\xi(\xi,t) = (1-\alpha(y(\xi,\tau(\xi)))) V_{0,\xi}(\xi)$. Since the time evolution of $y(\xi,t)$ depends heavily on the total amount of energy in the system, the points $y(\xi,\tau(\xi))$ where wave breaking happens will depend heavily on all other points where wave breaking has previously happened and cannot be computed explicitly. In Section \ref{section:exist lagrange} we reformulate the 2HS system in Lagrangian coordinates and prove the existence of $\alpha$-dissipative solutions in Lagrangian coordinates.

Similar to the case of conservative solutions, for $\alpha$-dissipative solutions parts of the energy can concentrate on a set of measure zero and later be given back into the system, hence one has to be careful when going from the Eulerian to the Lagrangian description of the system. Since we cannot choose $y(\xi,0) = \xi$ we need a systematic way of going from Eulerian coordinates $(u,\rho,\mu)$ to Lagrangian variables $(y,U,r,V)$, and vice versa. We will use the mappings developed for the Camassa--Holm equation in \cite{GHR,GHR15,HR07}, and modify them to the solution spaces for the 2HS system \cite{BHR,N}. Even though we cannot use $y(\xi,0) = \xi$, there is no unique way to define $y(\xi,0) = y_0(\xi)$. Thus there is some redundancy in the Lagrangian variables, and we can identify a group of homeomorphisms $\xi\mapsto f(\xi)$ that acts on the Lagrangian variables and identifies equivalence classes which are in one-to-one correspondence to the Eulerian coordinates. We will develop the mappings between Eulerian and Lagrangian coordinates, and use those mappings to prove existence of $\alpha$-dissipative solutions in Eulerian coordinates in Section \ref{section:exist euler}.

In Section \ref{section lipschitz metric}, we turn to the Lipschitz stability of solutions. Here the redundancy will cause some problems since two solutions may differ in Lagrangian coordinates, but coincide in Eulerian coordinates. To overcome this obstacle we must create a metric in Lagrangian coordinates that gives zero distance between solutions that map to the same Eulerian solution. Such a metric was created in \cite{BHR} for conservative solutions of \eqref{hunter-saxton}. Moreover, for metrics in Lagrangian coordinates to induce metrics in Eulerian coordinates we need that there is a bijection from Lagrangian coordinates to Eulerian coordinates. For this to be the case we must restrict $\alpha$ to either $\alpha\equiv 1$, that is dissipative solutions, or $\alpha:\R\rightarrow [0,1)$.

Since the size of the discontinuity in the energy at wave breaking is unknown initially, it is more difficult to establish stability of solutions in this case than in the conservative one. To do so we will need to keep track of the amount of energy initially. Thus in addition to the energy variable $\mu$ we will need an energy variable $\nu$ such that $\mu\leq\nu$, and $\mu_0 = \nu_0$. In Lagrangian coordinates this corresponds to a variable $H$ such that $V_\xi\leq H_\xi$ and $V_{0,\xi} = H_{0,\xi}$ and $H_t(\xi,t) = 0$. Hence the solution space will not be invariant with respect to time and therefore we will introduce a parametrized family of metrics $d_{\D_0^{\alpha,M}}(t,\cdot,\cdot)$ on sets with the total energy bounded by $M$. The main result of Section \ref{section lipschitz metric} is that the $\alpha$-dissipative solutions constructed in Section~\ref{section:exist euler} are Lipschitz continuous in time with respect to the initial data.

\section{Existence of solutions in Lagrangian coordinates}\label{section:exist lagrange}

In \cite{N} the conservative solutions to \eqref{hunter-saxton-system} have been constructed by rewriting the 2HS system as a system of differential equations in a suitable Banach space. Here we are aiming at constructing so-called $\alpha$-dissipative solutions where one takes out an $\alpha$-fraction of the concentrated energy every time wave breaking occurs. The description of these solutions is also based on a reformulation of \eqref{hunter-saxton-system} in Lagrangian coordinates via a generalized method of characteristics. In contrast to \cite{GHR15}, where the concept of $\alpha$-dissipative solutions has been introduced for $\alpha$ being a constant in $[0,1]$ in the context of the two-component Camassa--Holm system, we are considering functions
\begin{subequations}
\label{cond:alpha}
\begin{equation}
\alpha(x) \in W^{1, \infty}(\R)
\end{equation}
such that 
\begin{equation}
\text{ either }\quad 0\leq \alpha(x)<1 \quad \text{or} \quad \alpha(x)=1 \quad \text{ for all }x\in\R.
\end{equation}
\end{subequations}
In this section we will first introduce the concept of $\alpha$-dissipative solutions in Lagrangian coordinates and then establish their existence in this setting. 

Let $(u_0,\rho_0)$ be some smooth initial data for \eqref{hunter-saxton-system}, such that $u_0(x)\in L^\infty(\R)$ and $u_{0,x}(x)$, $\rho_0(x)\in L^2(\R)$. In addition, in this case the initial characteristic $y(\xi,0)$ can be chosen to be equal to the identity or more general to any strictly increasing function $y_0(\xi)$ belonging to the set of relabeling functions $G$, which will be introduced in Definition~\ref{definition_G}. Applying the method of characteristics, we obtain a local in time solution, by solving the initial value problem
\begin{subequations}
\begin{align}\label{lag:sm}
y_t(\xi,t)&=U(\xi,t),\\
U_t(\xi,t)& = \frac12 H(\xi,t)-\frac14 \lim_{\xi\to\infty}H(\xi,t),\\
H_t(\xi,t)& =0,\\
r_t(\xi,t)&=0
\end{align}
\end{subequations}
with initial data 
\begin{subequations}\label{ini:sm}
\begin{align}
y(\xi,0)& =y_0(\xi),\\
U(\xi,0)& =u_0(y(\xi,0))=u_0(y_0(\xi)),\\
H(\xi,0)&=\int_{-\infty}^{y(\xi,0)}\left(u_{0,x}^2(z)+\rho_0^2(z)\right)\:\d z=\int_{-\infty}^{y_0(\xi)} \left(u_{0,x}^2(z)+\rho_0^2(z)\right)\:\d z,\\
r(\xi,0)& = \rho_0(y(\xi,0))y_{\xi}(\xi,0)=\rho_0(y_0(\xi))y_{0,\xi}(\xi).
\end{align}
\end{subequations}
This system coincides with the one for conservative solutions and is valid until wave breaking occurs for the first time and energy concentrates on sets of measure zero. This happens when $y_\xi(\xi,t)=0$ for some $\xi$. To study this phenomenon in detail we differentiate the above system with respect to $\xi$ and get for each $\xi\in \R$ the following closed system of linear ordinary differential equations
\begin{subequations}
\label{eq:sm diff}
\begin{align}
y_{\xi,t}(\xi,t)& = U_\xi(\xi,t)\\
U_{\xi,t}(\xi,t)& = \frac12 H_\xi(\xi,t)\\
H_{\xi,t}(\xi,t)& = 0,
\end{align}
\end{subequations}
which can be solved explicitly. In particular, one has 
\begin{equation}
y_\xi(\xi,t)=y_{0,\xi}(\xi)+U_{0,\xi}(\xi)t+\frac14 H_{0,\xi}(\xi)t^2.
\end{equation}
Since $y_{0}(\xi)$ is strictly increasing and both $y_{0,\xi}(\xi)$ and $H_{0,\xi}(\xi)$ are positive for all $\xi\in\R$, wave breaking can only occur when $U_{0,\xi}(\xi)<0$. A closer look reveals that, if wave breaking occurs the breaking time $\tau(\xi)$ is given by 
\begin{equation}
\tau(\xi)=2\frac{-U_{0,\xi}(\xi)\pm \sqrt{U_{0,\xi}(\xi)^2-y_{0,\xi}(\xi)H_{0,\xi}(\xi)}}{H_{0,\xi}(\xi)}.
\end{equation}
and since wave breaking can occur only once, we must have 
\begin{equation}\label{comb:yUHr}
y_{0,\xi}(\xi)H_{0,\xi}(\xi)=U_{0,\xi}(\xi)^2.
\end{equation}
Comparing \eqref{comb:yUHr} with \eqref{ini:sm} yields, in addition, that wave breaking will occur at all points $\xi\in \R$ such that $U_{0,\xi}(\xi)<0$ and $r_0(\xi)=0$. The corresponding wave breaking time $\tau(\xi)$ is given by 
\begin{equation}
\tau(\xi)=-2\frac{U_{0,\xi}(\xi)}{H_{0,\xi}(\xi)}=-2\frac{y_{0,\xi}(\xi)}{U_{0,\xi}(\xi)}.
\end{equation}
Furthermore, it should be noted that 
\begin{equation}
\label{eq:lim U_xi y_xi}
U_\xi(\xi,t)\uparrow 0 \quad \text{ and } \quad y_\xi(\xi,t)\downarrow 0 \quad \text{as }t\to \tau(\xi)-.
\end{equation}

We can now turn to the continuation of the solution past the first time wave breaking occurs. If wave breaking occurs at a point $(\xi,t)$ in Lagrangian coordinates then it occurs in Eulerian coordinates at the point $(y(\xi,t),t)$, Moreover, the energy density at any point $(\xi,t)$ in the conservative case is given by $H_\xi(\xi,t)$. Thus taking out an $\alpha$-fraction of the energy concentrated at a point $(\xi, \tau(\xi))$ corresponds to replacing $H_\xi(\xi,t)$ by 
\begin{equation}
V_\xi(\xi,t)=\begin{cases}H_{0,\xi}(\xi), & \quad \text{ if }t<\tau(\xi),\\
(1-\alpha(y(\xi, \tau(\xi)))H_{0,\xi}(\xi), & \quad \text{ if } t\geq \tau(\xi),
\end{cases}
\end{equation}
on the right hand side of \eqref{lag:sm}

Before defining $\alpha$-dissipative solutions in Lagrangian coordinates rigorously, it is important to note that our introduction of the solution concept is heuristic, in that sense that we assumed that the initial data is smooth and no wave breaking occurs at time $t=0$. This limitations will be overcome in the next section, where we focus on the interplay between Eulerian and Lagrangian coordinates.

\vspace{0.2cm}
Let $E_1$ be the Banach space defined by
\begin{equation}
E_1 = \{ f\in L^{\infty}(\R) \mid f'\in L^2(\R) \text{ and }\lim_{x\rightarrow -\infty}f(x) = 0\},
\end{equation}
equipped with the norm $\|f\|_{E_1} = \|f\|_{\infty}+\|f'\|_{2}$, and $E_2$ the Banach space given by
\begin{equation}
E_2 = \{ f\in L^{\infty}(\R) \mid f'\in L^2(\R)\},
\end{equation}
equipped with the norm $\|f\|_{E_2} = \|f\|_{\infty}+\|f'\|_{2}$. 

In addition, let $B$ be the Banach space $B = E_2\times E_2\times E_1\times L^2(\R)\times E_1$ with the norm
\begin{equation}
\|(f_1,f_2,f_3,f_4,f_5)\|_B = \|f_1\|_{E_2} + \|f_2\|_{E_2} + \|f_3\|_{E_1} + \|f_4\|_2 + \|f_5\|_{E_1}.
\end{equation}
Then the set of Lagrangian coordinates $\mathcal{F}^\alpha$ is given as follows.

\begin{definition}[Lagrangian coordinates]
\label{def_F}
The set $\mathcal{F}^\alpha$ consists of all \hspace{2cm} \phantom{XX} $X=(y,U,H,r,V)$, such that $(y-\Id,U,H,r,V) \in B$, and 
\begin{align*}
(i)&\;\;\; y-\Id,U,H,V \in W^{1,\infty}(\R), r \in L^{\infty}(\R), \\
(ii)&\;\;\; 0\leq y_{\xi}, 0\leq H_{\xi}, 0<c< H_{\xi}+y_{\xi} \;\;\;\text{ almost everwhere},\\
(iii)&\;\;\; y_{\xi}V_{\xi} = U_{\xi}^2 + r^2\;\;\;\text{ almost everywhere},\\
(iv)&\;\;\; 0\leq V_{\xi} \leq H_{\xi} \;\;\;\text{ almost everywhere},\\
(v)& \;\;\; \text{If } 0\leq\alpha(x)<1 \text{ for all } x\in\R, \nonumber\\
	&\;\;\;\text{then there exists }\kappa:\R\rightarrow(0,1] \text{ such that } V_\xi(\xi)=\kappa(y(\xi))H_\xi(\xi) \text{ almost}\nonumber\\ &\;\;\; \text{everywhere}, \text{ and } \kappa(y(\xi)) = 1 \text{ whenever } U_\xi(\xi)<0 \text{ or } r(\xi) \neq 0,\\
(vi)& \;\;\; \text{If } \alpha(x)=1 \text{ for all } x\in \R,  \text{ then } y_\xi(\xi)=0 \text{ implies  } V_\xi(\xi)=0,\\
	& \;\;\; \text{and } y_\xi(\xi)>0 \text{ implies that } V_\xi(\xi) = H_\xi(\xi) \text{ almost everywhere}.
\end{align*}
\end{definition}

The $\alpha$-dissipative solution $X(t)\in \F^\alpha$ with initial data $X(0)=X_0\in \F^\alpha$ to \eqref{hunter-saxton-system} in Lagrangian coordinates is then given by
\begin{subequations}
\label{characteristic_system_a_dissipative}
\begin{align}
y_t(\xi,t) &= U(\xi,t),\\
U_t(\xi,t) &= \frac 12V(\xi,t)-\frac 14\lim_{\xi\to\infty}V(\xi,t),\\
H_t(\xi,t) &= 0,\\
r_t(\xi,t) &= 0,
\end{align}
\end{subequations}
where 
\begin{equation}\label{def:V}
V(\xi,t) = \int_{-\infty}^{\xi}H_{0,\xi}(\eta)\big(1-\mathbf{1}_{\{t\geq\tau(\eta)\}}\alpha(y(\eta,\tau(\eta)))\big)\:d\eta.
\end{equation}
Here the blow-up time $\tau(\xi)$ is given by 
\begin{equation}\label{def:tau}
\tau(\xi)=\begin{cases}
0, &\quad  \text{if } y_{0,\xi}(\xi)=0,\\
-\frac{2U_{0,\xi}(\xi)}{H_{0,\xi}(\xi)}, &\quad  \text {if } r_0(\xi)=0 \text{ and } U_{0,\xi}(\xi)< 0,\\
\infty, & \quad \text{otherwise.}
\end{cases}
\end{equation}
Note that $\tau(\xi)=0$ for some $\xi\in \R$, implies that $y_{0,\xi} (\xi)= 0$ for some $\xi\in\R$ and hence wave breaking occurs at time $t=0$. 

In \eqref{def:V} it is implicitly assumed that $V_{0,\xi}(\xi) = H_{0,\xi}(\xi)$ for a.e. $\xi\in\R$. This is a stricter condition than $(iv)$ in Definition \ref{def_F}. Thus we define the set of admissible initial data, $\F_i^\alpha$ as follows.
\begin{definition}
\label{def:F_init}
Let $\F_i^\alpha$ be the set of all $X\in\F^\alpha$ such that
\begin{equation}
V(\xi) = H(\xi) \quad \text{ for all }\xi\in\R.
\end{equation}
\end{definition}

Here, it should be pointed out that all of Section \ref{section:exist lagrange} and \ref{section:exist euler} remain true for initial data in $\F^\alpha$. The reason for this restriction will be clarified in Section \ref{section lipschitz metric}. 

We now turn our attention to the existence and uniqueness of solutions to the system \eqref{characteristic_system_a_dissipative} with initial data in $\F_i^\alpha$.

\begin{lemma}
\label{solution_operator_lagrangian}
To any initial data $X_0\in \F_i^\alpha$ there exists a unique solution $X\in C(\R_+, \F^\alpha)$ of \eqref{characteristic_system_a_dissipative} with $X(0)=X_0$.
\end{lemma}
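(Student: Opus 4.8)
The plan is to prove existence and uniqueness by showing that the right-hand side of the characteristic system \eqref{characteristic_system_a_dissipative} defines a globally Lipschitz vector field on a suitable subset of the Banach space $B$, so that the standard Picard--Lindel\"of theory applies. The main structural observation is that the equations for $H$ and $r$ are trivial ($H_t=r_t=0$), so $H(\xi,t)=H_0(\xi)$ and $r(\xi,t)=r_0(\xi)$ are frozen at their initial values. Consequently the blow-up time $\tau(\xi)$ in \eqref{def:tau} is determined entirely by the initial data $X_0$ and does not evolve; this is the key simplification that makes the system tractable despite the nonlocal coupling through $\lim_{\xi\to\infty}V(\xi,t)$.

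The first step I would carry out is to observe that, once $\tau$ is fixed by the initial data, the function $V(\xi,t)$ defined in \eqref{def:V} is an \emph{explicit} function of $t$ and of the values $y(\eta,\tau(\eta))$; in particular on the open intervals between successive (but here essentially single) breaking times the indicator $\mathbf{1}_{\{t\geq\tau(\eta)\}}$ is locally constant in $t$. I would then set up the solution iteratively in time: on each time interval where the set $\{\eta : \tau(\eta)\leq t\}$ has not changed, the system \eqref{characteristic_system_a_dissipative} reduces to
\begin{align*}
y_t &= U,\\
U_t &= \tfrac12 V - \tfrac14\lim_{\xi\to\infty}V,
\end{align*}
with $V$ a fixed element of $E_1$ (depending on which characteristics have already broken). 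This is a linear, in fact affine, inhomogeneous ODE in $(y-\Id,U)$ with values in $E_2\times E_2$, which can be solved explicitly and whose solution operator is bounded on bounded time intervals. The delicate point is the coupling: the breaking times $\tau(\eta)$ depend on $U_{0,\xi}$ and $H_{0,\xi}$ (fixed), but the \emph{amount} of energy removed involves $\alpha(y(\eta,\tau(\eta)))$, so $V$ depends on the trajectory $y$ up to its own breaking time. I would resolve this by noting that at $t=\tau(\eta)$ one has $y_\xi(\eta,\tau(\eta))=0$ and the position $y(\eta,\tau(\eta))$ is already determined by the solution on $[0,\tau(\eta)]$, so the feedback is causal and can be closed by a fixed-point argument on each interval.

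The main obstacle, and the step I expect to require the most care, is verifying that the constructed $X(t)$ remains in $\F^\alpha$ for all $t\geq 0$, i.e.\ that the constraints $(i)$--$(vi)$ of Definition~\ref{def_F} are propagated by the flow. Preserving $(ii)$ requires showing $y_\xi\geq0$ and $H_\xi+y_\xi\geq c>0$ are maintained; since $H_\xi$ is constant and $y_\xi$ can only touch zero at $t=\tau(\xi)$ before increasing again (by the convexity $y_\xi(\xi,t)=y_{0,\xi}+U_{0,\xi}t+\tfrac14 H_{0,\xi}t^2$ when no energy is removed, adjusted after breaking), the lower bound survives. The relation $(iii)$, $y_\xi V_\xi = U_\xi^2 + r^2$, must be checked to be consistent with the jump in $V_\xi$ at $\tau(\xi)$: one uses that at breaking $U_\xi\uparrow0$ and $r=0$ (from \eqref{comb:yUHr} and the analysis following it), so the right-hand side vanishes precisely where $y_\xi=0$, and after breaking the reduced energy density $V_\xi=(1-\alpha)H_\xi$ balances the regrown $U_\xi^2$. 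Finally I would confirm $(v)$ or $(vi)$ according to the dichotomy on $\alpha$, reading off $\kappa(y(\xi))=1-\mathbf{1}_{\{t\geq\tau(\xi)\}}\alpha(y(\xi,\tau(\xi)))$ and checking the stated conditions at points where $U_\xi<0$ or $r\neq0$. Global existence in $C(\R_+,\F^\alpha)$ then follows because only finitely much can happen: each characteristic breaks at most once, the estimates are uniform on bounded time intervals, and the Banach-space norms stay finite, so the local solutions patch together into a global one.
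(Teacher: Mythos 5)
Your overall strategy---freeze $H$, $r$ and hence $\tau$ from the initial data, then close the remaining coupling through $\alpha(y(\eta,\tau(\eta)))$ by a fixed point---is the right one and is essentially what the paper does. However, the specific route you propose has a genuine gap: you organize the construction as a time-stepping over ``intervals between successive breaking times,'' on each of which $V(\cdot,t)$ is claimed to be a fixed element of $E_1$ so that \eqref{characteristic_system_a_dissipative} reduces to an affine ODE. For general initial data in $\F_i^\alpha$ the breaking time $\tau(\xi)=-2U_{0,\xi}(\xi)/H_{0,\xi}(\xi)$ varies continuously with $\xi$, so the set $\{\eta:\tau(\eta)\leq t\}$ grows continuously in $t$ and there are no nondegenerate time intervals on which $V(\cdot,t)$ is constant in $t$. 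Your decomposition only works for special data (e.g.\ the multipeakons of Example~\ref{example iteration}, where $\tau$ takes finitely many values); it cannot be the backbone of the general existence proof. Relatedly, ``only finitely much can happen'' is not a valid justification for globality here, since infinitely many (a continuum of) characteristics break on any time interval in general.

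The second missing ingredient is the actual contraction estimate. You invoke ``a fixed-point argument on each interval'' and ``causality,'' but causality alone does not produce existence: the right-hand side of \eqref{characteristic_system_a_dissipative} depends on the history of $y$ through $\alpha(y(\eta,\tau(\eta)))$, and one must quantify how $V$ changes when $y$ changes. The paper's proof does this by iterating \eqref{approx alpha}--\eqref{approx X} and using
\begin{equation*}
|\beta_{n+1}(\xi,t)-\beta_n(\xi,t)|\leq\|\alpha'\|_\infty\sup_{t\in[0,T]}\|y_n(t)-y_{n-1}(t)\|_\infty,
\end{equation*}
which after two time integrations gives a contraction on $C([0,T],L^\infty(\R))$ for $T^2<8/(\|\alpha'\|_\infty\|H_0\|_\infty)$; this is precisely where the standing hypothesis $\alpha\in W^{1,\infty}(\R)$ from \eqref{cond:alpha} enters, and your proposal never uses it. Without this estimate neither existence nor uniqueness follows, since the vector field is not Lipschitz in any standard pointwise sense (the indicators $\mathbf{1}_{\{t\geq\tau(\eta)\}}$ are discontinuous). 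You would also need to track convergence of the derivatives $y_\xi,U_\xi$ separately (as the paper does, using $H_{0,\xi}\in L^2\cap L^\infty$) to conclude that the limit lies in $B$ and to verify properties $(ii)$--$(vi)$ of Definition~\ref{def_F}; your verification of those properties is otherwise sketched along the same lines as the paper's.
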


\begin{proof} Let $X_0\in \F_i^\alpha$.
First we prove existence and uniqueness of solutions to \eqref{characteristic_system_a_dissipative} in $B$. We are going to construct a sequence of approximate solutions $X_n(t)$ that converge to the $\alpha$-dissipative solution $X(t)$ which satisfies $X(0)=X_0$. To that end define $X_1(t)=X_0$ for all $t\geq 0$. For $n\geq 2$, we take advantage of the fact that for each $\xi\in\R$ the breaking time $\tau(\xi)$ can be computed from the initial data, see \eqref{def:tau}. Indeed we define, for $n+1\geq 2$, 
\begin{equation}
\label{approx alpha}
\beta_{n+1}(\xi,t) =
	\begin{cases}
	0, &\tau(\xi) = \infty,\\
	\alpha\big(y_n(\xi,t)\big), &t<\tau(\xi)<\infty,\\
	\alpha\big(y_n(\xi,\tau(\xi))\big), &\tau(\xi)\leq t,
	\end{cases}
\end{equation}
and $X_{n+1}(t)$ as the solution to 
\begin{subequations}
\label{approx X}
\begin{align}
y_{{n+1},t}(\xi,t) &= U_{n+1}(\xi,t),\\
U_{n+1,t}(\xi,t) &= \frac 12V_{n+1}(\xi,t)-\frac 14V_{n+1,\infty}(t),\\
H_{n+1,t}(\xi,t) &= 0,\\
r_{n+1,t}(\xi,t) &= 0,\\
V_{n+1}(\xi,t) &= \int_{-\infty}^{\xi}H_{0,\xi}(\eta)(1-\mathbf{1}_{\{t\geq\tau(\eta)\}}\beta_{n+1}(\eta,t))\:d\eta,
\end{align}
\end{subequations}
where $V_{n+1,\infty}(t)=\lim_{\xi\to\infty}V_{n+1}(\xi,t)$, with initial data $X_{n+1}(0)=X_0$. That $X_n(t)\in B$ for all $t\geq 0$ and $n\geq1$ follows inductively by integrating the system.

To establish the existence of a limiting function $X(t)$ we use a fix point argument. First, estimate the difference between $y_n(t)$ and $y_{n+1}(t)$ for $0\leq t\leq T$. From \eqref{approx alpha} we get that
\begin{equation}\label{diff:beta}
|\beta_{n+1}(\xi,t)-\beta_n(\xi,t)| \leq \|\alpha'\|_{\infty}\sup_{t\in[0,T]}\|y_n(t)-y_{n-1}(t)\|_\infty,
\end{equation}
 which implies together with \eqref{approx X} that
\begin{subequations}
\begin{align}
\label{diff:V}
|V_{n+1}(\xi,t)-V_n(\xi,t)| & \leq \int_{-\infty}^{\infty}H_{0,\xi}(\eta)|\beta_{n+1}(\eta,t)-\beta_n(\eta,t)|\mathbf{1}_{\{t\geq\tau(\eta)\}}\:d\eta\\ \nonumber
	&\leq  \|H_0\|_{\infty}\|\alpha'\|_{\infty}\sup_{t\in[0,T]}\|y_n(t)-y_{n-1}(t)\|_\infty,\\ 
\label{diff:U}
|U_{n+1}(\xi,t)-U_n(\xi,t)| 
	&\leq \frac 14 \|H_0\|_{\infty}\|\alpha'\|_{\infty}\sup_{t\in[0,T]}\|y_n(t)-y_{n-1}(t)\|_\infty t,\\ 
	\label{diff:y}
|y_{n+1}(\xi,t)-y_n(\xi,t)| &\leq \frac 18 \|H_{0}\|_{\infty}\|\alpha'\|_{\infty}\sup_{t\in[0,T]}\|y_n(t)-y_{n-1}(t)\|_\infty t^2.
\end{align}
\end{subequations}
Thus,
\begin{equation*}
\sup_{t\in[0,T]}\|y_{n+1}(t)-y_{n}(t)\|_\infty \leq \frac 18 \|H_0\|_{\infty}\|\alpha'\|_{\infty}\sup_{t\in[0,T]}\|y_{n}(t)-y_{n-1}(t)\|_\infty T^2,
\end{equation*}
and if $T$ is chosen such that $T <\sqrt{\frac{8}{\|\alpha'\|_{\infty}\|H_0\|_{\infty}}}$, we have a contraction on the \mbox{Banach} space $C([0,T], L^\infty(\R))$. In particular, there exists a unique function $y\in C([0,T], L^\infty(\R))$ such that $y_n(\cdot, t)\rightarrow y(\cdot, t)$ in $L^{\infty}(\R)$ for all $t\in [0,T]$. Moreover, \eqref{diff:beta}, \eqref{diff:V}, and \eqref{diff:U} imply that there exist unique $\beta$, $V$, and $U$  in $C([0,T], L^\infty(\R))$, such that $\beta_n(\cdot,t)\rightarrow \beta(\cdot,t)$, $V_n(\cdot,t)\to V(\cdot,t)$, and $U_n(\cdot, t)\rightarrow U(\cdot,t)$ in $L^{\infty}(\R)$ for all $t\in [0,T]$ as well. As far as $r_n(\xi,t)$ and $H_n(\xi,t)$ are concerned, observe that  $r_n(\xi,t)=r_0(\xi)$ and $H_n(\xi,t)=H_0(\xi)$ for all $n\geq 1$ and hence $r_n(t,\cdot)\to r_0(\cdot)$ in $L^2(\R)\cap L^\infty(\R)$ and $H_n(\cdot,t)\to H_0(\cdot)$ in $L^\infty(\R)$ for all $t\in \R$.

The system for the derivatives is given by
\begin{subequations}
\begin{align}
y_{n+1,\xi, t}(\xi,t) &= U_{n+1,\xi}(\xi,t),\\
U_{n+1,\xi, t}(\xi,t) &= \frac 12(1-\beta_{n+1}(\xi,t)\mathbf{1}_{\{t\geq\tau(\xi)\}})H_{0,\xi}(\xi),
\end{align}
\end{subequations}
and hence
\begin{subequations}
\begin{align}
|U_{n+1,\xi}(\xi,t)-U_{n,\xi}(\xi,t)|& \leq \frac12 t |\beta_{n+1}(\xi,t)-\beta_n(\xi,t)|H_{0,\xi}(\xi)\\
& \leq \frac12 t H_{0,\xi} \|\alpha'\|_\infty \sup_{t\in[0,T]} \|y_n(t)-y_{n-1}(t)\|_\infty,\nonumber \\
|y_{n+1,\xi}(\xi,t)-y_{n,\xi}(\xi,t)| & \leq \frac 14H_{0,\xi}t^2\|\alpha'\|_{\infty}\sup_{t\in[0,T]}\|y_{n}(t)-y_{n-1}(t)\|_\infty.
\end{align}
\end{subequations}
Since $H_{0,\xi}(\cdot)\in L^2(\R)\cap L^\infty(\R)$ it follows that $U_{n,\xi}(\cdot,t)\to U_{\xi}(\cdot,t)$ and $(y_{n,\xi}-1)(\cdot,t)\to (y_\xi-1)(\cdot,t)$ in $L^2(\R)\cap L^\infty(\R)$ for all $t\in [0,T]$. Thus we showed that to each $X_0\in \F^\alpha_i$ there exists a local solution $X(t)$ to \eqref{characteristic_system_a_dissipative} in $B$. In addition, we obtained that both $(y_\xi-1)(\cdot, t)$, $U_\xi(\cdot,t)$, and $r(\cdot,t)$ belong to $L^\infty(\R)$ for all $t\in [0,T]$.

The uniqueness of the solution $X(t)$ to \eqref{characteristic_system_a_dissipative} with $X(0)=X_0\in \F^\alpha_i$ on the interval $[0,T]$ follows by a standard contraction argument, based on estimates similar to the ones established so far. 

To extend $X(t)$ from a local to a global solution observe that the upper bound on $T$ only depends on $\| H_0\|_\infty$ and $\|\alpha '\|_\infty$, which are both independent of time. Thus one can repeat the above construction, with slight modifications, with initial data $X(\frac12 T)$ to obtain the unique solution on $[0, \frac32 T]$.  Continuing inductively finally yields the unique, global solution $X(t)\in B$ to \eqref{characteristic_system_a_dissipative}. 

The method is illustrated in Example \ref{example iteration} where the iteration \eqref{approx X} is applied to some multipeakon initial data.

It is left to show that $X(t)$ satisfies properties $(ii)$--$(vi)$ in Definition~\ref{def_F}.
By definition, see \eqref{def:V}, we have that 
\begin{equation}
V_\xi(\xi,t) =
	\begin{cases}
	H_{0,\xi}(\xi), &\text{if }t<\tau(\xi),\\
	\left(1-\alpha(y(\xi,\tau(\xi)))\right)H_{0,\xi}(\xi), &\text{if } t\geq \tau(\xi),
	\end{cases}
\end{equation}
and hence properties $(iv)$--$(vi)$ in Definition~\ref{def_F} hold since wave breaking happens when $U_\xi(\xi,t) = 0$ and $r(\xi,t) = 0$. It should be pointed out that in the case $\alpha=1$ we have that if $y_\xi(\xi,t) = 0$, then $y_\xi(\xi,t')= 0$ for all $t'>t$.

Direct calculations yield for a.e. $\xi\in\R$, that 
\begin{align}
\frac{d}{d t} \frac{1}{y_{\xi}(\xi,t)+H_{0,\xi}(\xi)} &= -\frac{U_\xi(\xi,t)}{(y_{\xi}(\xi,t)+H_{0,\xi}(\xi))^2}\nonumber \\ 
& \leq \frac{\sqrt{y_{\xi}V_{\xi}(\xi,t)}}{(y_{\xi}(\xi,t)+H_{0,\xi}(\xi))^2}
	\leq \frac 12\frac{1}{y_{\xi}(\xi,t)+H_{0,\xi}(\xi)},
\end{align}
Applying Gronwall's inequality, we obtain 
\begin{equation}
\label{eq:gronwall}
(y_\xi(\xi,0)+H_{0,\xi}(\xi))e^{-\frac12 t}\leq (y_\xi(\xi,t)+H_{0,\xi}(\xi))
\end{equation}
for a.e. $\xi\in\R$, i.e. property $(ii)$ in Definition~\ref{def_F} .

Thus it is left to show that $y_\xi(\xi,t) V_\xi(\xi,t)=U_\xi^2(\xi,t)+r^2(\xi,t)$ for a.e. $\xi\in\R$ and $t>0$. For a.e. $\xi\in\R$ we have for $t<\tau(\xi)$,
\begin{align*}
U_{\xi}(\xi, t)^2 =&\: \frac{1}{4}H_{0,\xi}^2(\xi)t^2 +U_{0,\xi}(\xi)H_{0,\xi}(\xi)t + U_{0,\xi}(\xi)^2, \\
r(\xi,t)^2 =& r_0^2(\xi), \\
y_{\xi}V_{\xi}(\xi,t) =&\: \frac{1}{4}H_{0,\xi}^2(\xi)t^2 +U_{0,\xi}H_{0,\xi}(\xi)t + U_{0,\xi}^2 (\xi)+ r_0^2(\xi),
\end{align*}
where we used that $V_{0,\xi}y_{0,\xi}(\xi)=U_{0,\xi}^2(\xi)+r_0^2(\xi)$ for a.e. $\xi\in\R$. For $t\geq \tau(\xi)$, we have 
\begin{align*}
r_t(t,\xi)& =0,\\
U_\xi(\xi,t)& =\frac12 V_\xi(\xi,\tau(\xi))(t-\tau(\xi)),\\
y_\xi(\xi,t)& = \frac 14 V_\xi(\xi,\tau(\xi))(t-\tau(\xi))^2,
\end{align*}
where we used that $U_\xi(\xi,\tau(\xi))=0=y_\xi(\xi,\tau(\xi))$. Thus, $y_\xi V_\xi(\xi,t)=U_\xi^2(\xi,t)+r^2(\xi,t)$ for a.e $\xi\in\R$ also in this case and Definition~\ref{def_F} $(iii)$ holds.
\end{proof}

A natural question, that is going to play a major role when establishing the Lipschitz stability, is if one can estimate the size of the set where wave breaking has already taken place in Lagrangian coordinates.  

\begin{corollary}
\label{cor:size of broken set}
Let $X_0\in\F_i^\alpha$, and denote by $X(t)$ the solution to \eqref{characteristic_system_a_dissipative} with initial data $X_0$. Then the set of all points where wave breaking has taken place within the time interval $[0,t]$ satisfies
\begin{equation}
\label{size of broken set F}
\mathrm m\left(\{\xi\mid \tau(\xi)\leq t\}\right) \leq \frac{\left(1+\frac 14t^2\right)}{c}\|H_{0}\|_\infty,
\end{equation}
where $c$ is given by $0<c\leq y_{0,\xi}(\xi)+H_{0,\xi}(\xi)$ for a.e. $\xi\in\R$ (cf. Definition \ref{def_F}).
\end{corollary}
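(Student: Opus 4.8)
The plan is to reduce \eqref{size of broken set F} to a pointwise lower bound for the initial energy density $H_{0,\xi}$ on the broken set $A_t:=\{\xi\mid\tau(\xi)\le t\}$ and then to integrate it. The guiding idea is that belonging to $A_t$ forces $H_{0,\xi}(\xi)$ to be bounded below in terms of $c$ and $t$, so the energy cannot be concentrated over too large a set.

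First I would unpack, via \eqref{def:tau}, what it means for $\xi$ to lie in $A_t$. Apart from the degenerate case $y_{0,\xi}(\xi)=0$ (where $\tau(\xi)=0$), a point breaks by time $t$ exactly when $r_0(\xi)=0$, $U_{0,\xi}(\xi)<0$ and $\tau(\xi)=-2U_{0,\xi}(\xi)/H_{0,\xi}(\xi)\le t$; in particular $H_{0,\xi}(\xi)>0$ there, and the breaking condition rearranges to $|U_{0,\xi}(\xi)|\le\tfrac{t}{2}H_{0,\xi}(\xi)$.

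Next I would bring in the initial constraints. Since $X_0\in\F_i^\alpha$ we have $V_{0,\xi}=H_{0,\xi}$, so property $(iii)$ of Definition~\ref{def_F} reads $y_{0,\xi}H_{0,\xi}=U_{0,\xi}^2+r_0^2$. On the breaking part of $A_t$, where $r_0=0$, this gives $y_{0,\xi}=U_{0,\xi}^2/H_{0,\xi}\le\tfrac{t^2}{4}H_{0,\xi}$, and the bound $y_{0,\xi}\le\tfrac{t^2}{4}H_{0,\xi}$ holds trivially when $y_{0,\xi}=0$. Inserting this into property $(ii)$ at $t=0$, namely $c\le y_{0,\xi}(\xi)+H_{0,\xi}(\xi)$, yields for a.e.\ $\xi\in A_t$
\begin{equation*}
c\le y_{0,\xi}(\xi)+H_{0,\xi}(\xi)\le\Bigl(1+\tfrac14 t^2\Bigr)H_{0,\xi}(\xi),
\end{equation*}
i.e.\ $H_{0,\xi}(\xi)\ge c/(1+\tfrac14 t^2)$ on $A_t$. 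Integrating over $A_t$ and using that $H_0\in E_1$ is nondecreasing with $H_0(-\infty)=0$, so that $\int_{\R}H_{0,\xi}\,\d\xi=\|H_0\|_\infty$, gives
\begin{equation*}
\frac{c}{1+\tfrac14 t^2}\,\mathrm m(A_t)\le\int_{A_t}H_{0,\xi}(\xi)\,\d\xi\le\|H_0\|_\infty,
\end{equation*}
which is \eqref{size of broken set F} after rearranging.

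The main subtlety, in my view, is anchoring the pointwise lower bound at the correct time. It is tempting to argue directly at time $t$ using the post-breaking formula $y_\xi(\xi,t)=\tfrac14 V_\xi(\xi,\tau(\xi))(t-\tau(\xi))^2$ from the proof of Lemma~\ref{solution_operator_lagrangian} together with $c\le y_\xi+H_\xi$; this fails, however, because the uniform constant $c$ in property $(ii)$ is not preserved in time---only up to the Gronwall factor $e^{-t/2}$ of \eqref{eq:gronwall}---so one cannot use the initial $c$ at time $t$. Working instead at $t=0$ and converting $\tau(\xi)\le t$ into $|U_{0,\xi}|\le\tfrac{t}{2}H_{0,\xi}$ is what keeps the initial constant $c$ available and produces the factor $1+\tfrac14 t^2$.
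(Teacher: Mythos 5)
Your proof is correct and follows essentially the same route as the paper: both arguments reduce to the pointwise lower bound $H_{0,\xi}(\xi)\geq c/(1+\tfrac14 t^2)$ on the broken set and then integrate against $\int_\R H_{0,\xi}\,\d\xi=\|H_0\|_\infty$. The only cosmetic difference is that the paper derives $y_{0,\xi}=\tfrac14 H_{0,\xi}\tau^2$ from the simultaneous vanishing of $y_\xi$ and $U_\xi$ at time $\tau(\xi)$, whereas you obtain the same identity from property $(iii)$ of Definition~\ref{def_F} at $t=0$ combined with the explicit formula for $\tau$; these are the same computation.
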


\begin{proof}
Given $\xi\in\R$ such that $\tau(\xi)\leq t$, then \eqref{characteristic_system_a_dissipative} implies that 
\begin{equation}
0=y_\xi(\xi,\tau(\xi))=y_{0,\xi}(\xi)+U_{0,\xi}(\xi)\tau(\xi)+\frac14 H_{0,\xi}(\xi)\tau(\xi)^2
\end{equation}
and 
\begin{equation}
0=U_\xi(\xi, \tau(\xi))=U_{0,\xi}(\xi)+\frac12 H_{0,\xi}(\xi)\tau(\xi).
\end{equation}
Combining these two equations finally yields $0=y_{0,\xi}(\xi)-\frac14 H_{0,\xi}(\xi)\tau(\xi)^2$ and hence  
\begin{equation}
\label{eq:H_0xi tau}
1= \frac{H_{0,\xi}(\xi)(1+\frac14 \tau(\xi)^2)}{y_{0,\xi}(\xi)+H_{0,\xi}(\xi)}\leq\frac{H_{0,\xi}(\xi)(1+\frac14 \tau(\xi)^2)}{c} ,
\end{equation}
where we used that there exists $c>0$ such that $y_{0,\xi}(\xi)+H_{0,\xi}(\xi)\geq c>0$ for a.e. $\xi\in\R$.
Thus one obtains
\begin{align}
\mathrm m\left(\{\xi\mid \tau(\xi)=t\}\right)& \leq \mathrm m\left(\{\xi\mid  \tau(\xi)\leq t \}\right) 
          = \int_\R\mathbf{1}_{\{\tau(\xi)\leq t\}}\:d\xi\nonumber \\
	&\leq \frac{1+\frac 14t^2}{c}\int_\R H_{0,\xi}(\xi)\:d\xi=\frac{1+\frac14 t^2}{c}\|H_0\|_\infty.
\end{align}
\end{proof}

Note, that if $y_{0\xi}(\xi)+H_{0,\xi} (\xi)= 1$ for all $\xi\in\R$, the above estimate reads
\begin{equation}
\label{size of broken set F_0^M}
\mathrm m\left(\{\xi\mid \tau(\xi)\leq t\}\right) \leq \left(1+\frac 14t^2\right)\|H_{0}\|_{\infty}.
\end{equation}

\section{Existence of solutions in Eulerian coordinates}\label{section:exist euler}

In Section \ref{section:exist lagrange} $\alpha$-dissipative solutions of the reformulation of the 2HS system in Lagrangian coordinates, \eqref{characteristic_system_a_dissipative}, have been constructed. The goal of this section is to establish the connection between Eulerian and Lagrangian coordinates, which will enable us to define $\alpha$-dissipative solutions in Eulerian coordinates. The strategy is to map the initial data in Eulerian coordinates to the space $\F^\alpha$, then solve \eqref{characteristic_system_a_dissipative}, and map the solution back to Eulerian coordinates. In order to do so the space $\D^\alpha$, of solutions in Eulerian coordinates, will be defined, and mappings between $\F^\alpha$ and $\D^\alpha$ will be established. A definition of $\alpha$-dissipative solutions of \eqref{hunter-saxton-system} will be given and the proposed solutions will be shown to satisfy this definition. 

The solution space in Eulerian coordinates is given in the next definition.
\begin{definition}[Eulerian coordinates]
\label{def D}
Let $\D^\alpha$ be the set of all $(u,\rho,\nu,\mu)$ such that
\begin{align*}
(i)&\;\;\; u\in L^{\infty}(\R), u_x\in L^2(\R),\\
(ii)&\;\;\; \rho\in L^2(\R),\\
(iii)&\;\;\; \mu\leq\nu \in \mathcal{M}^+(\R),\\
(iv)&\;\;\; \mu_{ac} = (u_x^2+\rho^2)\:d x,\\
(v)&\;\;\; \text{if } 0\leq\alpha(x)<1 \text{ for all } x\in\R, \text{ then } \frac{d\mu}{d\nu}>0,\\
	&\;\;\; \text{and } \frac{d\mu}{d\nu}(x) = 1 \text{ whenever } u_x(x)<0 \text{ or }\rho(x)\neq 0,\\
(vi)& \;\;\; \text{if } \alpha(x)=1 \text{ for all } x\in \R, \text{ then } \mu=\nu_{ac},
\end{align*}
where $\mathcal{M}^+(\R)$ denotes the set of positive, finite Radon measures on $\R$.
\end{definition}

Next, the mappings between the elements in $\D^\alpha$ and the elements of $\F^\alpha$ are introduced.  
When the mapping $L:\D^\alpha\rightarrow\F^\alpha$ is to be defined, the critical part is how to define the characteristics $y$ in nice way, since all the other variables will follow naturally.

\begin{definition}
Let the mapping $L:\D^\alpha\rightarrow\F^\alpha$ be defined by $L((u,\rho,\nu,\mu)) = (y,U,H,r,V)$ where
\begin{subequations}
\label{definition L}
\begin{align}
y(\xi) &= \sup\{x\mid\nu((-\infty,x))+x < \xi\},\\
H(\xi) &= \xi - y(\xi),\\
U(\xi) &= u \circ y(\xi),\\
r(\xi) &= (\rho\circ y(\xi))y_{\xi}(\xi),\\
V(\xi) &= \int_{-\infty}^{\xi}\frac{d\mu}{d\nu}\circ y(\eta)H_{\xi}(\eta)\:d\eta.
\end{align}
\end{subequations}
\end{definition}

When going from $\F^\alpha$ to $\D^\alpha$ one has to be careful, since the presence of wave breaking implies that there can be $\xi$ such that $y_\xi(\xi) = 0$. However, $y_\xi(\xi) = 0$ implies that $U_\xi(\xi) = 0$ and $r(\xi) = 0$ by property $(iii)$ in Definition \ref{def_F} and hence one can define the mapping $M:\F^\alpha\rightarrow\D^\alpha$ as follows. 

\begin{definition}
\label{definition M}
Let the mapping $M:\F^\alpha\rightarrow\D^\alpha$ be defined by $M((y,U,H,r,V)) = (u,\rho,\nu,\mu)$ where
\begin{subequations}
\begin{align}
u(x) &= U(\xi), \text{ for any }\xi \text{ such that } x = y(\xi),\\
\rho\:d x &= y_{\#}(r\:d\xi),\\
\nu &= y_{\#}(H_{\xi}\:d\xi),\\
\mu &= y_{\#}(V_{\xi}\:d\xi).
\end{align}
\end{subequations}
\end{definition}

That $L$ and $M$ are well-defined follows from combining the proofs of \cite[Proposition 2.6 and 2.9]{N}, \cite[Definitions 3.9 and 3.10, Theorem 3.11]{GHR15}, and \cite[Theorem 3.8 and 3.11]{HR07}, where slightly different mappings have been considered. We therefore leave the details to the interested reader.

There are five variables in Lagrangian coordinates, while in Eulerian coordinates there are only four. 
Hence there cannot be a one-to-one correspondence. 
Indeed, there is an equivalence relation $\sim$ on $\F^\alpha$ such that $X\sim\bar X$ implies that $M(X) = M(\bar X)$. This equivalence relation is induced by a group action $\bullet$ by a group $G$ given in the next definition.

\begin{definition}
\label{definition_G}
Let the group $G$ and the group action $\bullet$ of $G$ on $\F^\alpha$ be defined as follows.
\begin{itemize}
\item[(i)] Define $G$ as the group of homeomorphisms $f:\R\rightarrow\R$ such that $$f - \Id \in W^{1,\infty}(\R)\text{, } f^{-1}-\Id \in W^{1,\infty}(\R)\text{, and } f_{\xi}-1\in L^2(\R).$$
\item[(ii)] Define a group action $\bullet:\F^\alpha\times G \rightarrow \F^\alpha$ by $$(X,f)\mapsto (y\circ f, U\circ f, H\circ f, (r\circ f) \cdot f_\xi, V\circ f)=X\bullet f.$$
\end{itemize}
We will call the action of $f\in G$ on $X\in\F^\alpha$ the relabeling of $X$ by $f$.
\end{definition}

That the group action $\bullet$ is well-defined follows the same lines as \cite[Proposition 3.4]{HR07} and is left to the interested reader.

As indicated above $M$ respects equivalence classes identified via relabeling. 
\begin{proposition}
\label{M and f}
We have for any $X\in\F^\alpha$ and $f\in G$ that
\begin{equation}
M(X\bullet f) = M(X).
\end{equation}
\end{proposition}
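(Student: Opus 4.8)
The plan is to prove the identity componentwise: writing $M(X) = (u,\rho,\nu,\mu)$ and $M(X\bullet f) = (\bar u,\bar\rho,\bar\nu,\bar\mu)$, I would establish separately that $\bar u = u$, $\bar\rho\,dx = \rho\,dx$, $\bar\nu = \nu$, and $\bar\mu = \mu$. The only structural facts about $f$ needed are those packaged in Definition~\ref{definition_G}: $f$ is a strictly increasing homeomorphism of $\R$ with $f-\Id\in W^{1,\infty}(\R)$, so in particular $f$ is absolutely continuous, a bijection, and has derivative $f_\xi$ defined almost everywhere.

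For the $u$-component I would argue by bijectivity. By Definition~\ref{definition M}, $\bar u(x) = \bar U(\xi) = U(f(\xi))$ for any $\xi$ with $x = \bar y(\xi) = y(f(\xi))$. Setting $\zeta = f(\xi)$ and using that $f$ runs bijectively over $\R$, this says $\bar u(x) = U(\zeta)$ for any $\zeta$ with $x = y(\zeta)$, which is exactly the defining relation for $u(x)$; hence $\bar u = u$. The same substitution shows that $\bar U$ is constant on the level sets of $\bar y$, so $\bar u$ is genuinely well-defined.

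For the three measure components the key tool is the composition rule for pushforwards, $(y\circ f)_\# = y_\#\circ f_\#$, together with a single change-of-variables identity. Writing the relabeled densities via the chain rule, $\bar H_\xi = (H_\xi\circ f)\,f_\xi$ and $\bar V_\xi = (V_\xi\circ f)\,f_\xi$, while $\bar r = (r\circ f)\,f_\xi$ holds by definition of $\bullet$, I would test against an arbitrary bounded measurable $\psi$ and substitute $\zeta = f(\xi)$ to obtain the pushforward identities
\begin{equation*}
f_\#\big(\bar H_\xi\,d\xi\big) = H_\zeta\,d\zeta, \qquad f_\#\big(\bar V_\xi\,d\xi\big) = V_\zeta\,d\zeta, \qquad f_\#\big(\bar r\,d\xi\big) = r\,d\zeta.
\end{equation*}
Pushing these forward once more under $y$ and using the composition rule then gives $\bar\nu = (y\circ f)_\#(\bar H_\xi\,d\xi) = y_\#(H_\zeta\,d\zeta) = \nu$, and identically $\bar\mu = \mu$ and $\bar\rho\,dx = \rho\,dx$, which completes the proof.

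The step requiring the most care, and which I expect to be the main obstacle, is the change-of-variables identity and, underlying it, the chain rule $\bar H_\xi = (H_\xi\circ f)\,f_\xi$ almost everywhere. Since $H\in W^{1,\infty}(\R)$ is only Lipschitz and $f$ is merely absolutely continuous, this is not entirely automatic: one must verify that $f$ does not map a set of positive measure into the null set where $H$ fails to be differentiable. This is guaranteed because an absolutely continuous function has the Luzin $(N)$ property, so $H\circ f$ is absolutely continuous and the chain rule holds a.e.; combined with the substitution $\zeta = f(\xi)$, $d\zeta = f_\xi\,d\xi$ valid for a monotone absolutely continuous $f$, this legitimizes every change of variables above. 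I anticipate this measure-theoretic bookkeeping, rather than any conceptual difficulty, to be where the real work lies.
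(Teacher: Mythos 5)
Your proof is correct and follows essentially the same route as the paper, which simply defers to the standard arguments in the cited references (relabeling invariance via the pushforward composition rule $(y\circ f)_\#=y_\#\circ f_\#$ and the change of variables $\zeta=f(\xi)$). Note only that $f\in G$ is in fact bi-Lipschitz (both $f-\Id$ and $f^{-1}-\Id$ lie in $W^{1,\infty}(\R)$), so the chain-rule and Luzin $(N)$ issues you flag are even more benign than you suggest.
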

\begin{proof}
The proof is a combination of the ones of \cite[Theorem 3.11]{HR07}, \cite[Proposition 2.12]{N}, and \cite[Definition 3.19]{GHR15}.
\end{proof}

Let us look if the time evolution respects relabeling. To that end define $S_t$ the solution operators that maps any initial data $X_0\in\F_i^\alpha$ to its unique $\alpha$-dissipative solution $X(t)$ at time $t$.
\begin{definition}
\label{def:S_t}
For any $t\geq 0$ and $X_0\in\F_i^\alpha$ define $S_t(X_0)$ to be the unique $\alpha$-dissipative solution $X(t)$ at time $t$ that satisfies $X(0)=X_0$.
\end{definition}
For our solution concept to be well-defined we have to prove that $S_t$ commutes with relabeling.

\begin{proposition}
\label{Semigroup commutes with relabeling}
For any $X_0\in\F_i^\alpha$, $f\in G$ it holds that $S_t(X_0\bullet f) = S_t(X_0)\bullet f$.
\end{proposition}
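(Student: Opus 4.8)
The plan is to show that relabeling commutes with the time evolution by verifying that if $X(t) = S_t(X_0)$ solves the system \eqref{characteristic_system_a_dissipative} with initial data $X_0$, then $X(t)\bullet f$ solves the same system with initial data $X_0\bullet f$. Since Lemma~\ref{solution_operator_lagrangian} guarantees uniqueness of solutions, this identification forces $S_t(X_0\bullet f) = S_t(X_0)\bullet f$. The key observation is that the group action $\bullet f$ acts by precomposing $\xi\mapsto f(\xi)$ (and multiplying $r$ by $f_\xi$), which is a purely spatial operation and therefore commutes with the time derivatives appearing on the left-hand side of \eqref{characteristic_system_a_dissipative}.

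First I would record how each component of $X\bullet f = (y\circ f, U\circ f, H\circ f, (r\circ f)f_\xi, V\circ f)$ evolves. Writing $\tilde X(t) := X(t)\bullet f$ with components $\tilde y = y(f(\cdot),t)$, etc., the chain rule in $t$ gives $\tilde y_t = y_t\circ f = U\circ f = \tilde U$, and likewise $\tilde H_t = 0$ and $\tilde r_t = 0$ since $H_t = r_t = 0$ and $f$ is $t$-independent. For the $U$-equation one must check that $\tilde U_t(\xi,t) = \tfrac12 \tilde V(\xi,t) - \tfrac14\lim_{\eta\to\infty}\tilde V(\eta,t)$; here $\tilde U_t = U_t\circ f = \tfrac12 V\circ f - \tfrac14\lim_{\eta\to\infty}V(\eta,t)$, and since $f$ is a homeomorphism of $\R$ onto itself one has $\lim_{\eta\to\infty}V(f(\eta),t) = \lim_{\eta\to\infty}V(\eta,t)$, so the limit term is preserved. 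The remaining task is to confirm that $\tilde V$, defined as the fifth component of the solution started from $X_0\bullet f$, actually equals $V\circ f$.

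The main obstacle, and the step requiring real care, is the coupling through the breaking-time function $\tau$ and the definition \eqref{def:V} of $V$. One must show that the breaking time transforms as $\tilde\tau(\xi) = \tau(f(\xi))$, which follows from \eqref{def:tau} because $\tau$ is built pointwise out of the ratios $y_{0,\xi}$, $U_{0,\xi}$, $H_{0,\xi}$, $r_0$, and under relabeling these transform consistently (for instance $\tilde y_{0,\xi} = (y_{0,\xi}\circ f)f_\xi$ and $\tilde U_{0,\xi} = (U_{0,\xi}\circ f)f_\xi$, so their ratio is invariant and likewise $r_0\circ f = 0$ exactly when $r_0 = 0$ at the relabeled point). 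Consequently the indicator $\mathbf 1_{\{t\geq\tilde\tau(\xi)\}} = \mathbf 1_{\{t\geq\tau(f(\xi))\}}$ and the argument $\tilde y(\xi,\tilde\tau(\xi)) = y(f(\xi),\tau(f(\xi)))$ agree with the relabeled versions. Substituting into \eqref{def:V} for the solution started at $X_0\bullet f$ and performing the change of variables $\eta = f(\zeta)$ in the integral (using $\tilde H_{0,\xi} = (H_{0,\xi}\circ f)f_\xi$ and $d\eta = f_\xi\,d\zeta$) yields exactly $\int_{-\infty}^{f(\xi)}H_{0,\xi}(\eta)(1 - \mathbf 1_{\{t\geq\tau(\eta)\}}\alpha(y(\eta,\tau(\eta))))\,d\eta = V(f(\xi),t)$, i.e. $\tilde V = V\circ f$.

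Having verified all five equations, I conclude that $X(t)\bullet f$ is a solution of \eqref{characteristic_system_a_dissipative} with initial data $(X(t)\bullet f)|_{t=0} = X_0\bullet f$, which lies in $\F_i^\alpha$ since the group action preserves the admissibility conditions (this uses that $\bullet$ is well-defined on $\F^\alpha$, as noted after Definition~\ref{definition_G}, together with the identity $\tilde V_0 = V_0\circ f = H_0\circ f = \tilde H_0$). By the uniqueness part of Lemma~\ref{solution_operator_lagrangian}, this solution coincides with $S_t(X_0\bullet f)$, giving the claimed identity $S_t(X_0\bullet f) = S_t(X_0)\bullet f$. I expect the change-of-variables computation for $V$ and the invariance of $\tau$ to be the technical heart of the argument, while the remaining four equations transform essentially immediately under the chain rule.
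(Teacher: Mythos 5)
Your argument is correct, but it follows a genuinely different route from the paper. You verify directly that the candidate $X(t)\bullet f$ satisfies the self-consistent system \eqref{characteristic_system_a_dissipative}--\eqref{def:V} with initial data $X_0\bullet f$ (the heart being the identity $\tilde\tau=\tau\circ f$ from \eqref{def:tau} and the change of variables $\eta=f(\zeta)$ in the $V$-integral), and then invoke the uniqueness part of Lemma~\ref{solution_operator_lagrangian}. The paper instead works with the approximating sequence $X_n(t)$ from the iteration \eqref{approx X} and proves by induction that $\widehat{X}_n(t)=X_n(t)\bullet f$ for every $n$, passing to the limit; the same two ingredients ($\hat\tau=\tau\circ f$, cf.\ \eqref{ind:vor2}, and the substitution in the integral defining $V_{n+1}$) appear there at each induction step. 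Your version is shorter and conceptually cleaner, but it leans on the uniqueness assertion of Lemma~\ref{solution_operator_lagrangian}, which the paper only sketches (``by a standard contraction argument''); the paper's induction avoids any appeal to uniqueness and uses only the constructive convergence of the iterates. Two points worth making explicit in your write-up: the invariance of the breaking time uses that $f_\xi$ is bounded above and below away from zero almost everywhere (which follows from $f,f^{-1}\in G$), so that $\tilde y_{0,\xi}(\xi)=y_{0,\xi}(f(\xi))f_\xi(\xi)$ vanishes exactly when $y_{0,\xi}(f(\xi))$ does and the sign of $\tilde U_{0,\xi}$ matches that of $U_{0,\xi}\circ f$; and the preservation of the asymptotic term uses $f(\eta)\to\infty$ as $\eta\to\infty$, which holds since $f-\Id\in W^{1,\infty}(\R)$.
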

\begin{proof}
Let $f\in G$ and $X_0\in \F_i^\alpha$ be given and denote by $X(t)$ the corresponding $\alpha$-dissipative solution, i.e. $X(t)=S_t(X_0)$. According to Definition~\ref{definition_G}, $\hat X_0=X_0\bullet f\in \F_i^\alpha$ and the corresponding solution is given by $\hat X(t)=S_t(\hat X_0)=S_t(X_0\bullet f)$. Thus we have to show that $\hat X(t)=X(t)\bullet f$ for all $t\geq 0$. At initial time $t=0$ we have $\hat X(0)=X_0\bullet f=X(0)\bullet f$. For $t>0$ recall that both $X(t)$ and $\hat X(t)$ can be seen as the limit of sequences $X_n(t)$ and $\hat X_n(t)$, respectively, as defined in the proof of Lemma~\ref{solution_operator_lagrangian}. In particular one has that
\begin{equation}\label{ind:aim}
\hat X(t)=\lim_{n\to\infty}\hat X_n(t) \quad \text{ and } \quad X(t)\bullet f=\lim_{n\to\infty} (X_n(t)\bullet f).
\end{equation}
Thus it suffices to show by induction that $\hat X_n(t)=X_n(t)\bullet f$ for all $n\in \mathbb{N}$ and all $t\geq 0$. First of all note that
\begin{equation}\label{ind:vor2}
\hat \tau(\xi)=\tau(f(\xi))\quad \text{ for all }\xi
\end{equation}
 and
by definition $\hat X_1(t)=\hat X(0)=X_0\bullet f= X_1(t)\bullet f$ for all $t\geq 0$. For $n=2$ direct calculations yield that $\hat X_2(t)=X_2(t)\bullet f$. To conclude the argument, assume that
\begin{equation}\label{ind:vor}
\hat X_n(t)=X_n(t)\bullet f \quad \text{for all } t\geq 0.
\end{equation}
 Combining \eqref{approx alpha}, \eqref{ind:vor2} and \eqref{ind:vor} yields
\begin{equation}
\hat \beta_{n+1}(\xi,t)=\beta_{n+1}(f(\xi), t) \quad \text{ for all }t \geq 0,
\end{equation}
and subsequently
\begin{align}
\hat V_{n+1}(\xi,t)& =\int_{-\infty}^\xi (1-\hat \beta_{n+1}(\eta,t)\mathbf{1}_{\{t\geq\hat\tau(\eta)\}})\hat H_{0,\xi}(\eta)d\eta\\
& = \int_{-\infty}^\xi (1-\beta_{n+1}(f(\eta),t)\mathbf{1}_{\{t\geq \tau(f(\eta))\}}) H_{0,\xi}(f(\eta))f_\xi(\eta)d\eta\\
& = \int_{-\infty}^{f(\xi)} (1-\beta_{n+1}(\eta,t)\mathbf{1}_{\{t\geq \tau(\eta)\}})H_{0,\xi}(\eta)d\eta= V_{n+1}(f(\xi),t).
\end{align}
Thus \eqref{approx X} implies that $\hat X_n(t)=X_n(t)\bullet f$ for all $n\in\mathbb{N}$ and all $t\geq 0$.
\end{proof}

To show that there is a one-to-one correspondence between $\D^\alpha$ and equivalence classes in $\F^\alpha$, we need to identify a subset of $\F^\alpha$ containing one representative of each equivalence class.  
To that end define $\F_0^\alpha$ by
\begin{equation}
\label{eq:def F_0}
\F_0^\alpha =\left\{X\in\F^\alpha\mid y + H = \Id\right\}.
\end{equation}
The following proposition states that the mappings $M$ and $L$ are inverses of each other when $M$ is restricted to $\F_0^\alpha$.

\begin{proposition}
\label{prop: M and L}
The mappings $M$ and $L$ satisfy
\begin{align}
M\circ L &= \Id_{\D^\alpha},\\
L\circ M &= \Id_{\F_0^\alpha}.
\end{align}
\end{proposition}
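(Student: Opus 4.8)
The plan is to prove the two identities separately by unwinding the definitions of $L$ and $M$ and checking that each of the five components is reproduced. The central object in both directions is the characteristic $y$ together with the function $H$; once these are pinned down the remaining variables ($U$, $r$, $V$ and correspondingly $u$, $\rho$, $\mu$) follow by composition and pushforward, so the whole argument reduces to a careful bookkeeping of how $y$ and the measure $\nu$ encode one another.

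For the identity $M\circ L = \Id_{\D^\alpha}$, I would start from $(u,\rho,\nu,\mu)\in\D^\alpha$, set $(y,U,H,r,V)=L((u,\rho,\nu,\mu))$, and show $M((y,U,H,r,V))=(u,\rho,\nu,\mu)$. The key step is to analyze $y(\xi)=\sup\{x\mid \nu((-\infty,x))+x<\xi\}$: this is the generalized inverse of the strictly increasing function $x\mapsto \nu((-\infty,x))+x$, so $y$ is a nondecreasing Lipschitz function whose ``flat pieces'' correspond exactly to the atoms of $\nu$. The crucial computation is that $y_\#(H_\xi\,d\xi)=\nu$, i.e.\ that pushing forward $H_\xi\,d\xi=(1-y_\xi)\,d\xi$ recovers $\nu$; this is the standard change-of-variables identity relating a measure to its cumulative distribution function, and it simultaneously yields $y_\#(V_\xi\,d\xi)=\mu$ once one substitutes $V_\xi=\frac{d\mu}{d\nu}\circ y\cdot H_\xi$ and uses $\mu\ll\nu$. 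For $u$ one checks that $U=u\circ y$ composed back via $u(x)=U(\xi)$ at $x=y(\xi)$ returns $u$ (using that $u$ is continuous, being in $L^\infty$ with $u_x\in L^2$, so the value is unambiguous even where $y$ is constant), and for $\rho$ that $y_\#(r\,d\xi)=\rho\,dx$ by the same pushforward mechanism with $r=(\rho\circ y)y_\xi$.

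For the reverse identity $L\circ M=\Id_{\F_0^\alpha}$, I would take $X=(y,U,H,r,V)\in\F_0^\alpha$, so that $y+H=\Id$, form $(u,\rho,\nu,\mu)=M(X)$, and verify that $L((u,\rho,\nu,\mu))=X$. Here the constraint $y+H=\Id$ is exactly what fixes the representative: since $\nu=y_\#(H_\xi\,d\xi)$, one computes $\nu((-\infty,y(\xi)))+y(\xi)=\xi$ for the normalized labelling, which forces the $\sup$ defining $L$'s characteristic to return the original $y$, and then $H=\Id-y$ is immediate. The remaining components $U$, $r$, $V$ are recovered by reversing the composition/pushforward steps from the forward direction.

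The main obstacle I anticipate is not any single estimate but the handling of the ``flat regions'' of $y$ and the atoms of $\nu$, where the pointwise identities must be upgraded to almost-everywhere statements and where well-definedness of $u(x)=U(\xi)$ across an interval of constant $y$ must be justified (this is precisely where property $(iii)$ in Definition~\ref{def_F}, $y_\xi V_\xi=U_\xi^2+r^2$, is used to guarantee $U_\xi=0$ and $r=0$ there). Because the mappings $L$ and $M$ here differ only cosmetically from those in \cite{N,GHR15,HR07}, the cleanest route is to reduce each of these delicate points to the corresponding verified statement in those references rather than redoing the measure-theoretic arguments from scratch, and to present only the modifications forced by the extra variable $\nu$ (equivalently $H$) and by the solution space of the 2HS system.
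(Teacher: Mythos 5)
Your proposal is correct and follows essentially the same route as the paper, which simply defers this proposition to \cite[Theorem 3.11]{GHR15} and \cite[Theorem 3.12]{HR07}; your sketch just spells out the standard pushforward/generalized-inverse bookkeeping that those references carry out, including the correct use of property $(iii)$ of Definition~\ref{def_F} on flat regions of $y$. The only slight imprecision is the claim that $\nu((-\infty,y(\xi)))+y(\xi)=\xi$ pointwise: on an interval where $y$ is constant this sum equals the left endpoint of that interval rather than $\xi$, but the supremum defining $L$ still returns $y(\xi)$, so the conclusion stands.
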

\begin{proof}
See \cite[Theorem 3.11]{GHR15} and \cite[Theorem 3.12]{HR07}.
\end{proof}

Observe that $L$ maps $\D^\alpha$ to $\F_0^\alpha$. In addition, any initial data  $(u_0,\rho_0, \mu_0,\mu_0)$ is mapped to an element in $\F_{i,0}^\alpha=\F_0^\alpha\cap \F_i^\alpha$, which is advantageous because, among other things, \eqref{size of broken set F_0^M} applies in this setting.  

If $\bullet$ is to induce the desired equivalence relation, then to each $X\in\F^\alpha$ there must exist $f\in G$ such that $X\bullet f\in\F_0^\alpha$.

\begin{definition}
Define the map $\Pi:\F^\alpha\rightarrow\F_0^\alpha$ by 
\begin{equation}
\Pi X = X\bullet (y+H)^{-1}, \quad X\in\F^\alpha.
\end{equation}
To ease the notation we write $\Pi X$, despite the fact that $\Pi$ is not a linear operator.
\end{definition}

For the map $\Pi$ to be well-defined we need that $(y+H)^{-1}\in G$.

\begin{proposition}
\label{y plus H in G}
Let $X\in\F^\alpha$, then $y+H\in G$ and $(y+H)^{-1}\in G$. Moreover if $X_0\in\F_{i,0}^\alpha$, and $X(t)=S_t(X_0)$, then for all $t\geq 0$ we have that $e^{-\frac 12t}\leq y_{\xi}(\xi,t) + H_{\xi}(\xi,t)\leq \frac 14t^2+t+1$ for almost every $\xi\in\R$.
\end{proposition}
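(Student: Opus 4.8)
The plan is to treat the two assertions separately, since the first is a purely structural statement about any $X\in\F^\alpha$ while the second is a dynamical estimate along the flow $S_t$. For the claim $y+H\in G$, set $f:=y+H$. First I would verify that $f$ is an orientation-preserving homeomorphism of $\R$: it is Lipschitz because $f-\Id=(y-\Id)+H\in W^{1,\infty}(\R)$ by Definition~\ref{def_F}, it is strictly increasing because $f_\xi=y_\xi+H_\xi\geq c>0$ almost everywhere by property $(ii)$, and it is proper because $f-\Id$ is bounded, so $f(\xi)\to\pm\infty$ as $\xi\to\pm\infty$. The membership conditions for $G$ then split cleanly: $f-\Id\in W^{1,\infty}(\R)$ is immediate, and $f_\xi-1=(y_\xi-1)+H_\xi\in L^2(\R)$ since $y-\Id\in E_2$ and $H\in E_1$. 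For the inverse, the bounds $c\leq f_\xi\in L^\infty(\R)$ give that $f^{-1}$ is Lipschitz with $(f^{-1})_\eta=1/(f_\xi\circ f^{-1})$ lying between $1/\|f_\xi\|_\infty$ and $1/c$, whence $f^{-1}-\Id\in W^{1,\infty}(\R)$. The only point requiring a genuine computation is $(f^{-1})_\eta-1\in L^2(\R)$, which I would obtain by the change of variables $\eta=f(\xi)$:
\begin{equation*}
\int_\R\big|(f^{-1})_\eta(\eta)-1\big|^2\,\d\eta=\int_\R\frac{|1-f_\xi(\xi)|^2}{f_\xi(\xi)}\,\d\xi\leq\frac1c\int_\R|f_\xi(\xi)-1|^2\,\d\xi<\infty,
\end{equation*}
using $f_\xi\geq c$ and $f_\xi-1\in L^2(\R)$. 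This parallels the arguments in the cited works and establishes $y+H,(y+H)^{-1}\in G$.

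For the quantitative bounds I would first record that $H_t=0$ in \eqref{characteristic_system_a_dissipative} forces $H_\xi(\xi,t)=H_{0,\xi}(\xi)$, so $y_\xi(\xi,t)+H_\xi(\xi,t)=y_\xi(\xi,t)+H_{0,\xi}(\xi)$, and that $X_0\in\F_{i,0}^\alpha\subset\F_0^\alpha$ gives $y_{0,\xi}(\xi)+H_{0,\xi}(\xi)=1$ almost everywhere. The lower bound is then immediate from the Gronwall estimate \eqref{eq:gronwall} already proved in Lemma~\ref{solution_operator_lagrangian}, evaluated at $y_\xi(\xi,0)+H_{0,\xi}(\xi)=1$, giving $e^{-\frac12 t}\leq y_\xi(\xi,t)+H_{0,\xi}(\xi)=y_\xi(\xi,t)+H_\xi(\xi,t)$.

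For the upper bound I would split according to the blow-up time $\tau(\xi)$ of \eqref{def:tau}. For $t<\tau(\xi)$ the explicit solution of \eqref{characteristic_system_a_dissipative} gives $y_\xi(\xi,t)=y_{0,\xi}+U_{0,\xi}t+\tfrac14 H_{0,\xi}t^2$, so using $y_{0,\xi}+H_{0,\xi}=1$ one has
\begin{equation*}
y_\xi(\xi,t)+H_\xi(\xi,t)=1+U_{0,\xi}(\xi)t+\tfrac14 H_{0,\xi}(\xi)t^2.
\end{equation*}
Here property $(iii)$ of Definition~\ref{def_F} together with $V_{0,\xi}=H_{0,\xi}$ on $\F_i^\alpha$ gives $U_{0,\xi}^2\leq y_{0,\xi}H_{0,\xi}\leq\tfrac14(y_{0,\xi}+H_{0,\xi})^2=\tfrac14$, hence $U_{0,\xi}\leq\tfrac12\leq1$; combined with $H_{0,\xi}\leq1$ this gives the claimed bound $\tfrac14 t^2+t+1$. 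For $t\geq\tau(\xi)$ I would use $U_\xi(\xi,\tau(\xi))=0=y_\xi(\xi,\tau(\xi))$ and the post-breaking value $V_\xi=(1-\alpha(y(\xi,\tau(\xi))))H_{0,\xi}$ to integrate, obtaining
\begin{equation*}
y_\xi(\xi,t)+H_\xi(\xi,t)=H_{0,\xi}(\xi)\Big(1+\tfrac14\big(1-\alpha(y(\xi,\tau(\xi)))\big)(t-\tau(\xi))^2\Big),
\end{equation*}
which is at most $1+\tfrac14 t^2\leq\tfrac14 t^2+t+1$ since $H_{0,\xi}\leq1$, $1-\alpha\in[0,1]$, and $0\leq t-\tau(\xi)\leq t$.

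The structural verifications for $G$ are routine, the only genuinely computational step being the $L^2$ bound on $(f^{-1})_\eta-1$ handled by the change of variables above. The main point to get right in the second part is the two-regime analysis of the upper bound, and in particular that constraint $(iii)$ of Definition~\ref{def_F} — which on $\F_i^\alpha$ reads $y_{0,\xi}H_{0,\xi}=U_{0,\xi}^2+r_0^2$ — is exactly what forces $|U_{0,\xi}|\leq\tfrac12$, so that the linear-in-$t$ term never exceeds $t$ and the two regimes both fit under the single envelope $\tfrac14 t^2+t+1$.
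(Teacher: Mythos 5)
Your proposal is correct and follows essentially the same route as the paper: membership of $y+H$ and its inverse in $G$ from properties $(i)$--$(ii)$ of Definition~\ref{def_F} (with the change-of-variables computation for $(f^{-1})_\xi-1\in L^2(\R)$ that the paper leaves implicit), the lower bound from the Gronwall estimate \eqref{eq:gronwall}, and the upper bound from integrating \eqref{characteristic_system_a_dissipative} using $y_{0,\xi}+H_{0,\xi}=1$ and the bound on $U_{0,\xi}$ coming from property $(iii)$. Your two-regime analysis across $\tau(\xi)$ and the sharper bound $|U_{0,\xi}|\leq\tfrac12$ are fleshed-out versions of what the paper asserts in one line.
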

\begin{proof}
Let $y+H = f$, we show that both $f$ and $f^{-1}$ belong to $G$. From the definition of $\F^\alpha$ we have that $f-\Id\in W^{1,\infty}(\R)$ with $f_{\xi}-1\in L^2(\R)$, with $c\leq f_{\xi}\leq C$ for some positive numbers $c$ and $C$. Thus there exists a Lipschitz continuous inverse $f^{-1}$ such that $f^{-1}-\Id\in W^{1,\infty}(\R)$ and $(f^{-1})_\xi-1\in L^2(\R)$. Hence $f$, $f^{-1}\in G$. 

The lower bound on $y_{\xi}(\xi,t)+H_{\xi}(\xi,t)$ is given by \eqref{eq:gronwall}, while the upper bound is a result of \eqref{characteristic_system_a_dissipative} combined with $|y_{0,\xi}(\xi)|$, $|H_{0,\xi}(\xi)|$, and $|U_{0,\xi}(\xi)|$ all being less than or equal to $1$.
\end{proof}

The set of admissible initial data in Lagrangian coordinates $\F_i^\alpha$ corresponds to a set of admissible initial data $\D_0^\alpha = M(\F_i^\alpha)$ in Eulerian coordinates as already hinted before. We want to characterize the set $\D_0^\alpha$ in terms of conditions on the measures $\mu$ and $\nu$.

\begin{proposition}
Let $\D_0^\alpha = M(\F_i^\alpha)$. Then $(u,\rho,\nu,\mu)\in\D_0^\alpha$ if and only if
\begin{equation}
\nu = \mu.
\end{equation}
\end{proposition}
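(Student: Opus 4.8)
The plan is to prove the equivalence $\nu = \mu$ characterizes $\D_0^\alpha = M(\F_i^\alpha)$ by exploiting the explicit formulas for $M$ in Definition~\ref{definition M} together with the defining condition $V = H$ of $\F_i^\alpha$ from Definition~\ref{def:F_init}, and the fact (Proposition~\ref{prop: M and L}) that $L$ provides an inverse to $M$ on $\F_0^\alpha$.

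\medskip
\noindent\textbf{Forward direction.} First I would take $(u,\rho,\nu,\mu) = M(X)$ for some $X = (y,U,H,r,V)\in\F_i^\alpha$ and show $\nu=\mu$. From Definition~\ref{definition M} we have $\nu = y_\#(H_\xi\,d\xi)$ and $\mu = y_\#(V_\xi\,d\xi)$. Since $X\in\F_i^\alpha$ satisfies $V(\xi)=H(\xi)$ for all $\xi$ by Definition~\ref{def:F_init}, differentiation gives $V_\xi = H_\xi$ almost everywhere, so the two pushforward measures coincide and $\nu=\mu$. This direction is essentially immediate.

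\medskip
\noindent\textbf{Reverse direction.} Conversely, suppose $(u,\rho,\nu,\mu)\in\D^\alpha$ satisfies $\nu=\mu$; I must produce $X\in\F_i^\alpha$ with $M(X)=(u,\rho,\nu,\mu)$. The natural candidate is $X = L((u,\rho,\nu,\mu))$, which lies in $\F_0^\alpha$ by the remark following Proposition~\ref{prop: M and L}, and satisfies $M(X)=M(L((u,\rho,\nu,\mu)))=(u,\rho,\nu,\mu)$ by the identity $M\circ L=\Id_{\D^\alpha}$. It then remains to verify $X\in\F_i^\alpha$, i.e. that $V=H$ for this $X$. Using the formulas in Definition~\ref{definition L}, one has $H(\xi)=\xi-y(\xi)$ and $V(\xi)=\int_{-\infty}^\xi \frac{d\mu}{d\nu}\circ y(\eta)\,H_\xi(\eta)\,d\eta$. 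Since $\nu=\mu$, the Radon--Nikodym derivative satisfies $\frac{d\mu}{d\nu}=1$ $\nu$-almost everywhere, so $V_\xi = H_\xi$ a.e., and since both $V$ and $H$ vanish as $\xi\to-\infty$ (by the $E_1$-membership of $V$ and the structure of $H$), integration yields $V=H$ identically. Hence $X\in\F_{i,0}^\alpha\subset\F_i^\alpha$.

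\medskip
\noindent\textbf{Main obstacle.} The step requiring the most care is the reverse direction's handling of the Radon--Nikodym derivative on the set where $y$ is not injective, i.e. where $\nu$ has singular or atomic parts corresponding to intervals of $\xi$ on which $y_\xi=0$. One must check that $\frac{d\mu}{d\nu}\circ y$ is well-defined as the integrand and that the identity $\frac{d\mu}{d\nu}=1$ (forced by $\nu=\mu$) translates correctly through the composition with $y$ and the pushforward, including on atoms of $\nu$ where $H_\xi>0$ but $y_\xi=0$. The cleanest route is to lean on the already-established consistency of $L$ and $M$ in Proposition~\ref{prop: M and L} rather than re-examining the pushforward directly: once $M\circ L=\Id$ is granted, the only genuine verification is the pointwise identity $V_\xi=H_\xi$, which reduces to $\frac{d\mu}{d\nu}=1$ and is controlled by the hypothesis $\nu=\mu$.
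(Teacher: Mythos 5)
Your proposal is correct and follows essentially the same route as the paper: the forward direction uses $V=H$ to identify the two pushforward measures, and the reverse direction sets $X=L((u,\rho,\nu,\mu))$, deduces $\frac{d\mu}{d\nu}=1$ from $\nu=\mu$ so that $V=H$, and invokes $M\circ L=\Id_{\D^\alpha}$. The extra discussion of the Radon--Nikodym derivative on the non-injectivity set of $y$ is reasonable caution, but as you note it is already absorbed into the well-definedness of $L$ and Proposition~\ref{prop: M and L}, exactly as in the paper.
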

\begin{proof}
Assume that $X\in\F_i^\alpha$ and let $(u,\rho,\nu,\mu)=M(X)$. Then combining Definition~\ref{def:F_init} and Definition~\ref{definition M} yields
\begin{equation}
\nu = y_\#(H_\xi\:d\xi) = y_\#(V_\xi\:d\xi) = \mu.
\end{equation}

Let $(u,\rho,\mu,\nu)\in\D^\alpha$ such that $\mu=\nu$. Denoting $X=L((u,\rho,\nu,\mu))$, then we have $\frac{d\mu}{d\nu} = 1$ and subsequently
\begin{equation}
V(\xi) = \int_{-\infty}^\xi H_\xi(\eta)\:d\eta = H(\xi).
\end{equation}
That is $X\in \F_i^\alpha$
and since $M\circ L = \Id_{\D^\alpha}$ we have that $(u,\rho,\nu,\mu)\in\D_0^\alpha$.
\end{proof}

The set up is complete, but before defining $\alpha$-dissipative solutions in Eulerian coordinates with the help of the mappings $L$ and $M$, a remark on the properties imposed on $\alpha$.

\begin{remark}
We assume, see \eqref{cond:alpha}, that either $\alpha:\R\to [0,1)$ or $\alpha:\R\to 1$ instead of $\alpha:\R\rightarrow [0,1]$. The reason for this restriction is that in the latter case there exist solutions in Lagrangian coordinates for which Proposition~\ref{prop: M and L} is not satisfies at all times. This is the case if there exist
$t^\star\in [0,\infty)$ and $\xi_1$, $\xi_2$, $\xi_3\in\R$ with $\xi_1<\xi_2<\xi_3$ such that
\begin{itemize}
\item $y_\xi(\xi, t^\star)=0$ for all $\xi\in(\xi_1,\xi_2)$,
\item $V_\xi(\xi, t^\star)=0$ for all $\xi\in(\xi_1,\xi_3)$,
\item $V_\xi(\xi,t^\star)\not =0$ for all $\xi\in(\xi_3,\xi_2)$,
\end{itemize}
as illustrated in Example~\ref{example degenerate nu}.
\end{remark} 

We give the definition of $\alpha$-dissipative solutions of \eqref{hunter-saxton-system}.

\begin{definition}
\label{def: weak sol}
We say that $(u,\rho,\nu,\mu)$ is a weak solution of \eqref{hunter-saxton-system} with initial data $(u_0,\rho_0,\nu_0,\mu_0)\in\D_0^\alpha$ if
\begin{subequations}
\label{eq:regularity}
\begin{align} \label{eq:regularity:6}
u &\in C^{0,\frac 12}\left(\R\times[0,T],\R\right),\qquad \text{ for any finite } T\geq 0,\\ 
\rho\:d x &\in C_{weak *}\left([0,\infty),\mathcal M(\R)\right),\\ \label{eq:regularity:5}
\nu &\in C_{weak *}\left([0,\infty),\mathcal M^+(\R)\right),\\ \label{eq:regularity:d}
\big(u(t),\rho(t),\nu(t),\mu(t)\big) &\in \D^\alpha \quad\text{ for all } t\geq 0,\\ \label{eq:regularity:d2}
\left.\big(u,\rho,\nu,\mu\big)\right|_{t=0} &=(u_0,\rho_0,\nu_0,\mu_0),\\ \label{eq:regularity:d3}
\nu(t)(\R) &= \nu_0(\R)\quad \text{ for all } t\geq 0,
\end{align}
\end{subequations}
for each compactly supported test function $\phi\in C_0^\infty(\R\times [0,\infty))$,
\begin{subequations}
\label{eq: weak sol}
\begin{align}
\int_0^{\infty}\int_{\R}\big(u\phi_t+\frac{1}{2}u^2\phi_x+\frac 14\big(\int_{-\infty}^{x} d\mu- \int_{x}^{\infty} d\mu\big)\phi\big)\:d xd t
&= -\int_{\R} u_0\phi|_{t=0}\:d x,\\
\int_0^{\infty}\int_{\R}\rho\phi_t + (\rho u)\phi_x\:d xd t &= -\int_{\R}\rho_0\phi|_{t=0}\:d x,
\end{align}
and for each non-negative compactly supported test function $\phi\in C_0^\infty(\R\times [0,\infty))$,
\begin{equation}
\int_0^{\infty}\int_{\R}(\phi_t+u\phi_x)\:d\mu(t)d t \geq - \int_{\R} \phi|_{t=0}\:d\mu_0.
\end{equation}
\end{subequations}
If in addition for each $t\geq 0$ we have
\begin{subequations}
\label{eq: alpha weak}
\begin{align}
d\mu(t) &= d\mu(t)^-_{ac}+(1-\alpha(x))\:d\mu(x,t)^-_s,\\
\mu(s) &\overset{*}{\rightharpoonup} \mu(t) \text{ as } s\downarrow t,\\
\mu(s) &\overset{*}{\rightharpoonup} \mu(t)^- \text{ as } s\uparrow t,
\end{align}
\end{subequations}
we call $t\mapsto (u(t),\rho(t),\nu(t),\mu(t))$ an $\alpha$-dissipative weak solution.
\end{definition}

Now, $\alpha$-dissipative solutions can be constructed. The solution operator $T_t:\D_0^\alpha\times [0,\infty)\rightarrow\D^\alpha$ is then given by $T_t = M\circ S_t\circ L$.  

\begin{theorem}
\label{existence theorem}
To any initial data $(u_0,\rho_0,\nu_0,\mu_0)\in\D_0^\alpha$ there exists a globally defined $\alpha$-dissipative solution,  $\big(u(t),\rho(t),\nu(t),\mu(t)\big)$, in the sense of Definition \ref{def: weak sol} such that $\big(u(0),\rho(0),\nu(0),\mu(0)\big) = (u_0,\rho_0,\nu_0,\mu_0)$.
\end{theorem}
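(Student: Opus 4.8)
The plan is to realize the solution as the composition $T_t = M \circ S_t \circ L$ and then verify each requirement of Definition~\ref{def: weak sol} in turn. Set $X_0 = L((u_0,\rho_0,\nu_0,\mu_0))$. Since $(u_0,\rho_0,\nu_0,\mu_0)\in\D_0^\alpha$ we have $\nu_0=\mu_0$, so by the characterization of $\D_0^\alpha$ as $\nu=\mu$ the image $X_0$ lies in $\F_{i,0}^\alpha\subseteq\F_i^\alpha$. Lemma~\ref{solution_operator_lagrangian} then furnishes a unique global solution $X(t)=S_t(X_0)\in\F^\alpha$ for all $t\geq0$, and I would set $(u(t),\rho(t),\nu(t),\mu(t))=M(X(t))$. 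Three of the conditions are then immediate: membership $(u(t),\rho(t),\nu(t),\mu(t))\in\D^\alpha$ for every $t$ is just the statement that $M$ maps $\F^\alpha$ into $\D^\alpha$, so \eqref{eq:regularity:d} holds; the initial condition \eqref{eq:regularity:d2} follows from $M\circ L=\Id_{\D^\alpha}$ (Proposition~\ref{prop: M and L}) together with $X(0)=X_0$; and conservation of total energy \eqref{eq:regularity:d3} follows because $\nu(t)(\R)=\int_\R H_\xi(\xi,t)\,d\xi=\int_\R H_{0,\xi}(\xi)\,d\xi=\nu_0(\R)$, using $H_t\equiv0$.

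For the regularity statements \eqref{eq:regularity:6}--\eqref{eq:regularity:5} I would read everything off the Lagrangian representation. The H\"older estimate on $u$ comes from writing $u(y(\xi,t),t)=U(\xi,t)$ and combining the uniform-in-time bounds on $U$, $U_\xi$, and $y_\xi$ obtained in Lemma~\ref{solution_operator_lagrangian} and Proposition~\ref{y plus H in G}; the $C^{0,1/2}$ exponent reflects the square-root relation $U_\xi^2\leq y_\xi V_\xi$ encoded in $(iii)$ of Definition~\ref{def_F}. Weak-$*$ continuity of $\rho\,dx$, $\nu$, and $\mu$ reduces, after pushing forward by $y$, to continuity in $t$ of $y(\cdot,t)$, $r$, $H_\xi$, and $V_\xi$ tested against continuous compactly supported functions, which is supplied by the continuity of $t\mapsto X(t)$ in $\F^\alpha$.

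The substantive part is the weak formulation \eqref{eq: weak sol}. For a test function $\phi\in C_0^\infty(\R\times[0,\infty))$ I would introduce the pullback $\psi(\xi,t)=\phi(y(\xi,t),t)$, so that $\psi_t=\phi_t+U\phi_x$ by $y_t=U$, and rewrite every Eulerian integral as a Lagrangian one via $dx=y_\xi\,d\xi$ together with $u\circ y=U$, the push-forward $\rho\circ y\,y_\xi\,d\xi\mapsto r\,d\xi$, and $d\mu\mapsto V_\xi\,d\xi$. Integrating by parts in $\xi$ and $t$ and inserting the characteristic relations $U_t=\tfrac12V-\tfrac14\lim_{\xi\to\infty}V$ and $r_t=0$ should collapse the interior terms and leave exactly the two integral identities, with the boundary contributions reproducing $-\int u_0\phi|_{t=0}\,dx$ and $-\int\rho_0\phi|_{t=0}\,dx$; the dissipation inequality for $\mu$ against nonnegative $\phi$ then follows because $t\mapsto V_\xi(\xi,t)$ is nonincreasing at each fixed $\xi$. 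These manipulations mirror the conservative computations of \cite{N} and the Camassa--Holm treatments \cite{GHR15,HR07}, so I would cite them for the routine algebra while checking that the change of variables remains valid on the set where $y_\xi=0$ has positive measure.

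The $\alpha$-dissipation conditions \eqref{eq: alpha weak} are where the present setting genuinely departs from the conservative one, and I expect them to be the main obstacle. The decomposition $d\mu(t)=d\mu(t)^-_{ac}+(1-\alpha)\,d\mu(t)^-_s$ must be extracted from the explicit jump in $V_\xi$ across $\tau(\xi)$ recorded in \eqref{def:V}: the set $\{\xi\mid\tau(\xi)\leq t\}$, whose finite measure is controlled by Corollary~\ref{cor:size of broken set}, is precisely where $V_\xi$ has been multiplied by $1-\alpha(y(\xi,\tau(\xi)))$, and pushing this set forward by $y(\cdot,t)$ yields the singular part that has been damped by the factor $1-\alpha$. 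The one-sided weak-$*$ limits amount to showing that $\mu$ is right-continuous but drops at each breaking time, which follows from dominated convergence applied to $V(\xi,t)$ as $t$ crosses $\tau(\xi)$, together with the uniform energy bound. The delicate bookkeeping specific to the variable-$\alpha$ case is verifying that $\alpha$ is evaluated at the correct Eulerian breaking point $y(\xi,\tau(\xi))$ and that this identification is consistent under relabeling, which I would justify via Propositions~\ref{M and f} and \ref{Semigroup commutes with relabeling}.
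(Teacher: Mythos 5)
Your overall architecture coincides with the paper's: define $T_t=M\circ S_t\circ L$, check \eqref{eq:regularity:d}--\eqref{eq:regularity:d3} from Proposition \ref{prop: M and L} and $H_t\equiv 0$, carry out the change of variables $x=y(\xi,t)$ to verify \eqref{eq: weak sol} (the paper resolves your worry about the set $\{y_\xi=0\}$ by noting that $U_\xi$ and $r$ vanish there, so the integrands contribute nothing on its complement's complement), and derive the H\"older and weak-$*$ continuity statements from the Lagrangian bounds essentially as you describe.

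The one genuine gap is in your verification of \eqref{eq: alpha weak}. The jump computation gives $\mu(t)^--\mu(t)=y(t)_{\#}\big(\alpha(y(\xi,t))H_{0,\xi}(\xi)\mathbf{1}_{\{\xi\mid y_\xi(\xi,t)=0\}}\,d\xi\big)$, which one rewrites as $\alpha\, d\big(\mu(t)^-|_{A(t)^c}\big)$ with $A(t)=y(\{\xi\mid y_\xi(\xi,t)>0\},t)$. To reach the required identity $d\mu(t)=d\mu(t)^-_{ac}+(1-\alpha(x))\,d\mu(t)^-_s$ you must prove that $\mu(t)^-|_{A(t)^c}$ is exactly the singular part $\mu(t)^-_s$. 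One direction is cheap: $A(t)^c$ is Lebesgue-null (its measure is bounded by $\int_{\{y_\xi=0\}}y_\xi\,d\xi=0$), so the restriction to $A(t)^c$ is singular. The substantive direction is that $\mu(t)^-|_{A(t)}$ is absolutely continuous, i.e.\ that no singular mass of $\mu(t)^-$ sits over points where $y_\xi>0$; without this, the measure you push forward from $\{y_\xi=0\}$ could be a proper subset of $\mu(t)^-_s$ and the damping factor $1-\alpha$ would be applied to the wrong measure. This is precisely what your phrase ``pushing this set forward by $y(\cdot,t)$ yields the singular part'' asserts without argument. The paper proves it by introducing the truncation sets $K_M=\{\xi\mid V_\xi(\xi,t)/y_\xi(\xi,t)\leq M\}$, estimating the mass assigned to any Lebesgue-null set $E$ by $M\,\mathrm m(A(t)\cap E)=0$, and letting $M\to\infty$ via monotone convergence. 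Note also that Corollary \ref{cor:size of broken set} is not the relevant tool here: finiteness of the Lebesgue measure of $\{\xi\mid\tau(\xi)\leq t\}$ in the $\xi$-variable plays no role in this step; what matters is that the image of $\{y_\xi(\cdot,t)=0\}$ under $y(\cdot,t)$ is null in the $x$-variable, together with the absolute continuity just described.
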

\begin{proof}
First we show that the operator $T_t = M\circ S_t\circ L$ is indeed a solution operator on $\D^\alpha$. Let $(u(t),\rho(t),\nu(t),\mu(t)) = T_t((u_0,\rho_0,\nu_0,\mu_0))$. Then we have $(u(t),\rho(t),\nu(t),\mu(t))\in\D^\alpha$, and \eqref{eq:regularity:d}--\eqref{eq:regularity:d3} are satisfied. 

To show \eqref{eq:  weak sol}, we apply a change of variables in the integrals by letting $x=y(\xi,t)$ for each $t$. Strictly speaking we have to integrate over the set $\{\xi\in\R\mid y_{\xi}(\xi,t)>0\}$, but since $y_{\xi}(\xi,t)= 0$ implies that both $r(\xi,t)=0$ and $U_{\xi}(\xi,t)=0$ the integrands vanish on the complement of this set. We have that
\begin{align}
&\int_0^{\infty}\int_{\R}\big(u\phi_t+\frac{1}{2}u^2\phi_x+\frac 14\big(\int_{-\infty}^{x} d\mu - \int_{x}^{\infty} d\mu\big)\phi\big)\:d xd t \nonumber\\
	&= \int_0^{\infty}\int_\R U(\phi_t\circ y+\frac 12 U\phi_x\circ y)y_{\xi}(\xi)\:d\xi d t \nonumber\\
	&\qquad + \frac 14\int_0^{\infty}\int_{\R}\big(\int_{-\infty}^{y(\xi)} d \mu - \int_{y(\xi)}^{\infty} d \mu\big)\phi\circ y y_{\xi}\:d\xi d t \nonumber\\
	&= \int_0^{\infty}\int_{\R}\big[ U(\xi,t)\big(\frac{d}{d t}\phi(y(\xi,t),t)-\frac 12 U\phi_x(y(\xi,t),t)\big)\nonumber\\ 
	&\qquad +\frac 12\big(\int_{-\infty}^{\xi}V_{\xi}(\eta,t)\:d\eta-\frac 12 \int_{-\infty}^{\infty}V_{\xi}(\eta,t)\:d\eta\big)\phi \circ y\big]y_{\xi}\:d\xi d t \nonumber\\
	&= \int_0^{\infty}\int_{\R}\big[ \frac{d}{d t}(U\phi\circ y) - \frac 12 U^2\phi_x\circ y\big]y_{\xi}\:d\xi d t \nonumber\\
	&= \int_0^{\infty}\int_{\R}\big[\frac{d}{d t}(U\phi\circ y y_{\xi}) - \frac 12 U^2\phi_{x}\circ y y_{\xi}-U\phi\circ y y_{t,\xi}\big]\:d\xi d t \nonumber\\
	&= \int_0^{\infty}\int_{\R}\big[\frac{d}{d t}(U\phi\circ y y_{\xi}) - (\frac 12 U^2\phi\circ y)_{\xi}\big]\:d\xi d t \nonumber\\
	&= -\int_{\R} u_0\phi|_{t=0}\:d x.
\end{align}
The equation for $\rho$ is treated in the same way,
\begin{align}
\int_0^{\infty}\int_{\R}\rho\phi_t + (\rho u)\phi_x\:d xd t &= \int_0^{\infty}\int_{\R}\big(\phi_t +  u\phi_x\big)\rho\:d xd t \nonumber\\
&= \int_0^{\infty}\int_{\R}\big(\phi_t\circ y +  U\phi_x\circ y\big)r\:d\xi d t \nonumber\\
&= \int_0^{\infty}\int_{\R}\frac{d}{d t}\phi(y(\xi,t),t)r(\xi,t)\:d\xi d t \nonumber\\
&= \int_0^{\infty}\int_{\R}\frac{d}{d t}\big(\phi(y(\xi,t),t)r(\xi,t)\big)\:d\xi d t \nonumber\\
&= -\int_{\R}\phi(y(\xi,0),0)r(\xi,0)\:d\xi\nonumber\\
&= -\int_{\R}\rho_0(x)\phi(x,0)\:d x.
\end{align}
The inequality for $\mu$ is proved following the same lines
\begin{align}
\int_0^{\infty}\int_{\R}(\phi_t+u\phi_x)\:d\mu(t)d t &= \int_0^{\infty}\int_{\R}(\phi_t+u\phi_x)\circ y V_{\xi}\:d\xi d t \nonumber\\
	&= \int_0^{\infty}\int_{\R}V_{\xi}\frac{d}{d t}(\phi\circ y)\:d\xi d t \nonumber\\
	&= \int_\R \Big(\int_0^{\tau}H_{0,\xi}\frac{d}{d t}(\phi\circ y)\:d t\nonumber\\
	&\qquad + \int_{\tau}^\infty(1-\alpha(y(\tau)))H_{0,\xi}\frac{d}{d t}(\phi\circ y)\:d t\Big)\:d\xi\nonumber\\
	&= \int_{\R}\Big(\alpha(y(\tau))H_{0,\xi}\phi\circ y|_{t=\tau}- H_{0,\xi}(\phi\circ y)|_{t=0}\Big)\:d\xi\nonumber\\
	&\geq -\int_{\R} H_{0,\xi}(\phi\circ y)|_{t=0}\:d\xi \nonumber\\
	&= - \int_{\R} \phi|_{t=0}\:d\mu_0.
\end{align}

We are going to show that \eqref{eq:regularity:6}--\eqref{eq:regularity:5} hold. First we show that $u$ is H{\"o}lder continuous. To that end let $(x_1,t_1)$, $(x_2,t_2)\in \R\times[0,\infty)$. Then there exist $\xi_1$ and $\xi_2\in \R$ such that $y(\xi_1,t_1)=x_1$ and $y(\xi_2,t_2)=x_2$. Thus
\begin{align}
|u(x_1,t_1)-u(x_2,t_2)| &= |u(y(\xi_1,t_1),t_1)-u(y(\xi_2,t_2),t_2)|\nonumber\\
	&\leq |\int_{y(\xi_1,t_1)}^{y(\xi_2,t_1)}u_x(x,t_1)\:d x| + |u(y(\xi_2,t_1),t_1)-u(y(\xi_2,t_2),t_2)|\nonumber\\
	&\leq \|u_x(\cdot,t_1)\|_2\sqrt{|y(\xi_2,t_1)-y(\xi_1,t_1)|} + |\int_{t_1}^{t_2}U_t(\xi_2,t)\:d t|\nonumber\\
	&\leq \sqrt{\nu_0(\R)}\sqrt{|x_2-x_1|+|y(\xi_2,t_2)-y(\xi_2,t_1)|}\nonumber\\
	&\quad + \frac 14\nu_0(\R)|t_2-t_1|\nonumber\\
	&\leq \sqrt{\nu_0(\R)}\sqrt{|x_2-x_1|+\|u_0\|_\infty|t_2-t_1|+\frac 18\nu_0(\R)|t_2-t_1|^2}\nonumber\\
	&\quad + \frac 14\nu_0(\R)|t_2-t_1|,
\end{align}
and thus $u$ is H{\"o}lder continuous. Moreover, if $t_1,t_2\leq T$ for some finite $T$, then 
\begin{equation}
|u(x_1,t_1)-u(x_2,t_2)|\leq C(1+\sqrt T)\sqrt{|x_2-x_1|+|t_2-t_1|},
\end{equation}
where $C$ depends on $\|u_0\|_\infty$ and $\nu_0(\R)$ only. Hence $u$ is locally H\" older continuous both with respect to time and space with H\"older exponent one half. 

Let $t\geq 0$ be given, and choose a sequence $t_n$ in $[0,\infty)$ that converges to $t$. In order to show the continuity of the measure $\nu$ let $\phi\in C_c^\infty(\R)$ be given. Then
\begin{equation}
\int_\R\phi(x)\:d\nu(t_n) = \int_\R\phi(y(\xi,t_n))H_{0,\xi}(\xi)\:d\xi.
\end{equation}
Since $\phi$ is bounded and of compact support, and $y(\xi,t_n)$ tends to $y(\xi,t)$ for almost every $\xi$ as $t_n$ tends to $t$, we have that $\phi(y(\xi,t_n))H_{0,\xi}(\xi)\leq \|\phi\|_\infty H_{0,\xi}(\xi)$. Thus by the Lebesgue dominated convergence theorem $\nu(t_n)$ converges star weakly to $\nu(t)$ in the sense of measures.

The weak star continuity of $\rho\:d x$ is proved in the same manner as for $\nu$.

Next, we show that \eqref{eq: alpha weak} holds. The proof is similar to the one of \cite[Theorem 4.3]{GHR15}. Let $t>0$ be given. Then we have that
\begin{align}
\lim_{s\uparrow t}V_\xi(\xi,s) &= 
	\begin{cases}
	H_{0,\xi}(\xi), &\text{if } t\leq\tau(\xi),\\
	V_\xi(\xi,t), &\text{if } \tau(\xi) < t.
	\end{cases}\\
\lim_{s\downarrow t}V_\xi(\xi,s) &= V_\xi(\xi,t).
\end{align}
We want to prove that
\begin{subequations}
\label{eq:nu m and nu p}
\begin{align}
\lim_{s\uparrow t} \mu(s) &= y(t)_{\#}\big(\lim_{s\uparrow t} V_\xi(\xi,s)\:d\xi\big),\\
\lim_{s\downarrow t} \mu(s) &= y(t)_{\#}\big(V_\xi(\xi,s)\:d\xi\big) = \mu(t),
\end{align}
\end{subequations}
where the limit is in the weak star sense in $\mathcal M^+(\R)$. Let $\phi\in C_c^\infty(\R)$ be given, then
\begin{equation}
\int_\R\phi(x)\:d\mu(s) = \int_\R\phi(y(\xi,s))V_\xi(\xi,s)\:d\xi.
\end{equation}
Since $\phi$ is bounded and of compact support, and $y(\xi,s)$ tends to $y(\xi,t)$ for almost every $\xi$ as $s$ tends to $t$, we have that $\phi(y(\xi,s))V_\xi(\xi,s)\leq \|\phi\|_\infty H_{0,\xi}(\xi)$. Thus by the Lebesgue dominated convergence theorem \eqref{eq:nu m and nu p} hold. Let us prove that
\begin{equation}
d\mu(t) = d\mu(t)^-_{ac}+(1-\alpha(x))d\mu(t)^-_s,
\end{equation}
where $\mu(t)^- = \lim_{s\uparrow t}\mu(s)$.
We have that
\begin{align}
\label{eq:nu m nu}
\mu(t)^--\mu(t) &= y(t)_{\#}\big(\lim_{s\uparrow t} V_\xi(\xi,s)\:d\xi\big)-y(t)_{\#}\big(V_\xi(\xi,t)\:d\xi\big)\nonumber\\
	&= y(t)_{\#}\big((\lim_{s\uparrow t} V_\xi(\xi,s)-V_\xi(\xi,t))\:d\xi\big)\nonumber\\
	&= y(t)_{\#}\big(\alpha(y(\xi,t))H_{0,\xi}(\xi)\mathbf 1_{\{\xi\mid y_\xi(\xi,t) = 0\}}d\xi\big).
\end{align}
Define for each $t$ the sets
\begin{align}
B(t) &= \{\xi\mid y_\xi(\xi,t) > 0\},\\
A(t) &= y(B,t).
\end{align}
Let us show that $A(t)$ is of full measure. Since $y(\cdot,t)$ is surjective $A(t)^c\subseteq y(B(t)^c,t)$, and thus
\begin{equation}
\mathrm m(A(t)^c) \leq \int_{y(B(t)^c,t)}\:d\xi = \int_{B(t)^c}y_\xi(\xi,t)\:d\xi = 0.
\end{equation}
We prove that $y_\xi(\xi,t)>0$ almost everywhere in $y(t)^{-1}(A(t))$. Assume that there exists $\xi\in y(t)^{-1}(A(t))$ such that $y_\xi(\xi,t) = 0$. Then $\xi\in B(t)^c$, and there must be $\xi'$ in $B(t)$ such that $y(\xi',t) = y(\xi,t)$. But then $[\xi,\xi']\subseteq y(t)^{-1}(A(t))$, and $y(\eta,t) = y(\xi',t)$ for $\eta\in (\xi,\xi')$. Thus either $y_\xi(\xi',t) = 0$ or $y_\xi(\xi',t)$ is undefined, which contradicts $\xi'\in B(t)$.

Equation \eqref{eq:nu m nu} implies that on any measurable set $C\subseteq A(t)$ the measure $\mu(t)^--\mu(t)$ is zero. Hence $\mu(t)^--\mu(t)$ is supported on a set of measure zero, and is thus singular with respect to the Lebesgue measure. By the general change of variable formula for any $C\subseteq A(t)^c$ we have that $\left(\mu(t)^--\mu(t)\right)(C) = \int_C\alpha(x)d\mu(t)^-$, and hence $\left(\mu(t)^--\mu(t)\right) = \alpha\:d\big(\left.\mu(t)^-\right|_{A(t)^c}\big)$. To complete the proof we need to prove that
\begin{equation}
\mu(t)^-_{ac} = \left.\mu(t)^-\right|_{A(t)},
\end{equation}
which would imply that $\mu(t)^-_{s} = \left.\mu(t)^-\right|_{A(t)^c}$. First we note that $\mathrm m(A(t)^c) = 0$. Thus for any measurable $C$
\begin{align}
\mu(t)^-_{ac}(C) &\leq \mu(t)^-_{ac}(C\cap A(t))+ \mu(t)^-_{ac}(C\cap A(t)^c)\nonumber\\
	&\leq \mu(t)^-(C\cap A(t)),
\end{align}
and hence $\mu(t)^-_{ac} \leq \mu(t)^-|_{A(t)}$. We must show that $\mu(t)^-|_{A(t)}$ is absolutely continuous. Let $E$ be a set of measure zero and define $K_M =\{\xi\in\R\mid \frac {V_\xi(\xi,t)}{y_\xi(\xi,t)} \leq M\}$. Then $\mathbf{1}_{K_M}$ tends pointwise to one in $y(t)^{-1}(A(t))$, and thus
\begin{align}
\int_{y^{-1}(A(t)\cap E)}V_\xi(\xi,t)\mathbf{1}_{K_M}\:d\xi &\leq M \int_{y^{-1}(A(t)\cap E)}y_\xi(\xi,t)\:d\xi\nonumber\\
	&= M\mathrm m(A(t)\cap E)\nonumber\\
	&= 0.
\end{align}
Hence by the monotone convergence theorem $\left.\mu(t)^-\right|_{A(t)}(E) = 0$ for all sets $E$ of measure zero, and $\left.\mu(t)^-\right|_{A(t)}$ is absolutely continuous.
\end{proof}

See Example \ref{example alpha dissipative} for an example of an $\alpha$-dissipative solution in Eulerian coordinates.

We end this section by a corollary that connects the notion of $\alpha$-dissipative solutions to the construction of dissipative solutions by Bressan and Constantin \cite{BC},  Dafermos \cite{D}, Zhang and Zheng \cite{ZZ}, and Wunsch \cite{W10}.
\begin{corollary}
In the special case $\alpha(x)$ being constant, $\nu_0 =\mu_0= \mu_{0,ac}$, and $\rho_0\equiv 0$ there exists a formula for the solution similar to the formula in \cite[Theorem 2.1]{D} for $\alpha\equiv 1$. More precisely, choosing as initial characteristic the identity function, i.e. $y(\xi,0)=\xi$ (which is always possible in this case), one obtains
\begin{align}
u(x,t) &= u_0(\xi_{x,t}) + \frac 12\int_0^t\Big(\int_{-\infty}^{\xi_{x,t}} \left(1-\alpha\mathbf{1}_{\{z\mid 0\leq\frac{-2}{u_{0,x}(z)}\leq s\}}\right)u_{0,x}(z)^2\:d z\nonumber\\
    &\qquad -\frac 12\int_{-\infty}^{\infty} \left(1-\alpha\mathbf{1}_{\{z\mid 0\leq\frac{-2}{u_{0,x}(z)}\leq s\}}\right)u_{0,x}(z)^2\:d z\Big)\:d s,\\
x &= \xi_{x,t} + u_0(\xi_{x,t})t + \frac 12\int_0^t\int_0^s\Big(\int_{-\infty}^{\xi_{x,t}} \left(1-\alpha\mathbf{1}_{\{z\mid 0\leq\frac{-2}{u_{0,x}(z)}\leq \sigma\}}\right)u_{0,x}(z)^2\:d z\nonumber\\
    &\qquad -\frac 12\int_{-\infty}^{\infty} \left(1-\alpha\mathbf{1}_{\{z\mid 0\leq\frac{-2}{u_{0,x}(z)}\leq \sigma\}}\right)u_{0,x}(z)^2\:d z\Big)\:d\sigma d s,
\end{align}
where $\xi_{x,t}$ is given implicitely through the relation $x=y(\xi_{x,t},t)$.
\end{corollary}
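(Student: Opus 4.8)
The plan is to exploit the special structure to select a particularly simple Lagrangian representative and then integrate \eqref{characteristic_system_a_dissipative} explicitly. First I would record the Lagrangian data attached to the choice $y(\xi,0)=\xi$. Since $\rho_0\equiv 0$ and $\mu_0=\mu_{0,ac}=u_{0,x}^2\,\d x$ has no singular part, the representative $L((u_0,\rho_0,\mu_0,\mu_0))\in\F_{0}^\alpha$ may be relabeled, using Proposition~\ref{M and f} so that $M$ is left unchanged, into one with $y_0=\Id$; this is the parenthetical claim, and it is admissible precisely because $\mu_0$ carries no concentrated energy. With $y_{0,\xi}\equiv 1$ one then has $U_0=u_0$, $H_{0,\xi}=u_{0,x}^2$, $r_0=\rho_0\,y_{0,\xi}=0$, and $V_{0,\xi}=H_{0,\xi}=u_{0,x}^2=U_{0,\xi}^2+r_0^2=y_{0,\xi}V_{0,\xi}$, so that the representative lies in $\F_i^\alpha$.

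Next I would make the energy variable explicit. Inserting $r_0\equiv 0$ and $y_{0,\xi}\equiv 1$ into \eqref{def:tau} gives $\tau(\xi)=-2/u_{0,x}(\xi)$ when $u_{0,x}(\xi)<0$ and $\tau(\xi)=\infty$ otherwise, so that $\{t\geq\tau(\xi)\}=\{0\leq -2/u_{0,x}(\xi)\leq t\}$. Because $\alpha$ is constant, the factor $\alpha(y(\eta,\tau(\eta)))$ in \eqref{def:V} collapses to the number $\alpha$, whence
\begin{equation*}
V(\xi,t)=\int_{-\infty}^{\xi}\big(1-\alpha\mathbf{1}_{\{0\leq -2/u_{0,x}(z)\leq t\}}\big)u_{0,x}(z)^2\,\d z.
\end{equation*}
The decisive point is that, unlike the general $\alpha$-dissipative situation described in the introduction, here $V$ no longer depends on the unknown characteristic $y$, so the equation for $U$ decouples completely.

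I would then integrate the decoupled system. From $U_t=\tfrac12 V-\tfrac14\lim_{\xi\to\infty}V=\tfrac12\big(V-\tfrac12\lim_{\xi\to\infty}V\big)$ together with $U(\xi,0)=u_0(\xi)$, one integration in time produces the stated expression for $U(\xi,t)$; then $y_t=U$ with $y(\xi,0)=\xi$ integrates a second time to the stated expression for $y(\xi,t)$, the nested integrals arising simply from substituting the explicit $V$ above. Passing back to Eulerian coordinates via $M$ gives $u(x,t)=U(\xi,t)$ for any $\xi$ with $y(\xi,t)=x$; writing $\xi_{x,t}$ for a solution of $x=y(\xi_{x,t},t)$ and reading $U$ and $y$ at $\xi=\xi_{x,t}$ reproduces the two displayed identities.

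The main obstacle is the well-definedness of $\xi_{x,t}$ after wave breaking. For $0\leq\alpha<1$ the relations $U_\xi(\xi,t)=\tfrac12 V_\xi(\xi,\tau(\xi))(t-\tau(\xi))$ and $y_\xi(\xi,t)=\tfrac14 V_\xi(\xi,\tau(\xi))(t-\tau(\xi))^2$ established in the proof of Lemma~\ref{solution_operator_lagrangian}, combined with $V_\xi(\xi,\tau(\xi))=(1-\alpha)u_{0,x}^2(\xi)>0$, show that $y_\xi$ becomes positive again for $t>\tau(\xi)$; hence $y(\cdot,t)$ is strictly increasing off the null set $\{\xi\mid\tau(\xi)=t\}$ and $\xi_{x,t}$ is determined for almost every $x$. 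For $\alpha\equiv 1$ the colliding characteristics stick together, but $U$ is constant across each collision interval, so $u(x,t)=U(\xi_{x,t},t)$ is still unambiguous. Finally, since $u_{0,x}$ is only defined almost everywhere, the pointwise quantity $-2/u_{0,x}$ and the associated indicator sets are to be interpreted almost everywhere, which does not affect the value of the integrals.
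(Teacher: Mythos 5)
Your argument is correct and is precisely the computation the corollary is meant to encapsulate (the paper states it without a written proof): with $y_0=\Id$, $r_0\equiv 0$ and $\alpha$ constant, the breaking time from \eqref{def:tau} reduces to $\tau(\xi)=-2/u_{0,x}(\xi)$, the energy \eqref{def:V} becomes an explicit functional of $u_{0,x}$ alone so that \eqref{characteristic_system_a_dissipative} decouples, and two time integrations yield the displayed formulas, with the a.e.\ invertibility of $y(\cdot,t)$ (resp.\ constancy of $U$ on collision intervals when $\alpha=1$) justifying the implicit definition of $\xi_{x,t}$ exactly as you say. The only point treated as casually by you as by the paper is the parenthetical ``always possible'': taking $y_0=\Id$ (equivalently, relabeling $L((u_0,0,\mu_0,\mu_0))$ by $f=y^{-1}$ with $f_\xi=1+u_{0,x}^2$) tacitly requires $u_{0,x}\in L^\infty(\R)\cap L^4(\R)$ so that $f\in G$ and $U_0=u_0$, $H_{0,\xi}=u_{0,x}^2$ satisfy Definition~\ref{def_F}$(i)$, a regularity assumption beyond membership in $\D_0^\alpha$ that is worth stating explicitly.
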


\section{The Lipschitz stability}\label{section lipschitz metric}

In this section we construct a parametrized family of metrics on bounded sets of $\D^\alpha$ that renders the flow Lipschitz continuous with respect to the initial data. The construction is based on the ones in \cite{BHR,LipCHline,LipCH}. The idea is to first create a parametrized family of metrics $\tilde d(t, \cdot, \cdot)$ on $\F^\alpha$, and then to use $\tilde d(t,\cdot, \cdot)$ to construct a parametrized family of metrics on $\D^\alpha$. 

We start by motivating the choice of the set of initial data $\D_0^\alpha$.
Having a close look at the construction of $\alpha$-dissipative solutions in the last two sections, it is natural to require that the initial data $(u_0,\rho_0,\mu_0, \nu_0)\in \D_0^\alpha$. To be more precise, all the important information about the solution is contained in the functions $(u, \rho, \mu)$ while the measure $\nu$ is only added for technical reasons. Correspondingly the important information in Lagrangian coordinates is encoded in the functions $(y,U,r,V)$ while $H$ is a help function. In particular, one has that $M((y,U,H,r,V))$ and $M((y,U,\tilde H, r, V))$ yield the same triplet $(u, \rho,\mu)$ in Eulerian coordinates. Thus if we do not have initial data in $\D_0^\alpha$ we can get solutions which are equal for all practical purposes but far apart in Lagrangian coordinates. In addition, this choice is consistent with the conservative solutions constructed in \cite{N}, that is $\alpha\equiv 0$, since $\nu(t) = \mu(t)$ for all $t\geq 0$ for conservative solutions.

Beside of the choice of the initial data, there is one more major difficulty. As illustrated in Example \ref{example why g}, solutions do not necessarily play nicely with the $B$-norm. In particular, the sudden changes in the energy can lead to jumps in the distance. To model the drop in the energy at wave breaking in a continuous way we introduce a function $g$, which separates points where wave breaking will occur and points where there will be no wave breaking. We start by splitting $\R$ into two regions: One consisting of all points $\xi\in\R$ where wave breaking will occur and one where there will be no wave breaking.

\begin{definition}
\label{def_Omega}
For each $X\in\F^\alpha$ define the sets $\Omega_c(X)$, and $\Omega_d(X)$ by
\begin{align}
\Omega_c(X) &= \{\xi\in\R\mid U_{\xi}(\xi)\geq 0\text{ or } r(\xi)\neq 0\},\\
\Omega_d(X) &= \{\xi\in\R\mid U_{\xi}(\xi) < 0\text{ and } r(\xi) = 0\}.
\end{align}
\end{definition}
For each $X\in\F^\alpha$ the real line splits into mutually disjoint parts $\Omega_{\iota}(X)$, $\iota = c,d$. 
Given $X_0\in \F^\alpha_i$ and $\xi \in \R$, observe that if you start initially in $\Omega_c(X_0)$, $c$ for conservative (or continuous), you will remain there for all later times. That is, $\Omega_c(X_0)\subseteq\Omega_c(X(t))$ for all $t\geq 0$.  If you, on the other hand, start initially in $\Omega_d(X_0)$, $d$ for dissipative (or discontinuous), you will at the wave breaking time $\tau$, given by \eqref{def:tau}, enter $\Omega_c(X(\tau))$. In other words, $\cap_{t\geq 0}\Omega_d(X(t)) = \emptyset$.

To model continuously the drop in the energy we introduce the function $g$ and for technical reasons in addition $g_2$ and $g_3$.
\begin{definition}
For each $X\in\F^\alpha$ and $\xi\in\R$ define
\begin{equation}
g(X(\xi)) =
\begin{cases}
y_{\xi}(\xi)+H_{\xi}(\xi)-\alpha(y(\xi))H_\xi(\xi), &\xi\in\Omega_d(X),\\
y_{\xi}(\xi) + V_{\xi}(\xi), &\xi\in\Omega_c(X).
\end{cases}
\end{equation}
Furthermore define $g_2$ by
\begin{equation}
g_2(X(\xi)) = 
	\begin{cases}
	\|\alpha'\|_\infty H_\infty U_\xi(\xi), &\xi\in\Omega_d(X),\\
	0, &\xi\in\Omega_c(X).
	\end{cases}
\end{equation}
and $g_3$ by
\begin{equation}
g_3(X(\xi)) = 
	\begin{cases}
	\|\alpha'\|_\infty U(\xi) U_\xi(\xi), &\xi\in\Omega_d(X),\\
	0, &\xi\in\Omega_c(X),
	\end{cases}
\end{equation}
where $\displaystyle H_\infty=\lim_{\xi\to\infty}H(\xi)$.
\end{definition}

By construction we have that  $g(X)-1$, $g_2(X)$, and $g_3(X)\in L^2(\R)$. We want to show that $t\mapsto g(X(t)),t\mapsto g_2(X(t)),t\mapsto g_3(X(t))$ are well-defined functions, which are continuous in time, for any solution $X(t) = S_t(X_0)$.
\begin{proposition}
Let $X(\xi,t) = S_t(X_0(\xi))$ for some $X_0\in\F_i^\alpha$ and $\xi \in \R$. Then, the mappings $t\mapsto g(X(\xi,t))-1$, $t\mapsto g_2(X(\xi,t))$, $t\mapsto g_3(X(\xi,t))$ are continuous with respect to the $L^2(\R)$-norm.
\end{proposition}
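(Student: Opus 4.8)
The plan is to prove continuity of each of the three maps into $L^2(\R)$ at a fixed time $t_0\geq 0$ by combining pointwise-in-$\xi$ continuity with a time-uniform $L^2$ majorant and then invoking the dominated convergence theorem. I fix a finite horizon $T>t_0$ and work with $t\in[0,T]$. Throughout I use that along a solution $X(t)=S_t(X_0)$ the variable $H_\xi$ is constant in time, equal to $H_{0,\xi}\in L^2(\R)\cap L^\infty(\R)$ (the latter by Definition~\ref{def_F}$(i)$), together with the explicit expressions for $y_\xi$, $U_\xi$, $V_\xi$ derived in the proof of Lemma~\ref{solution_operator_lagrangian}: for $t<\tau(\xi)$ one has $U_\xi(\xi,t)=U_{0,\xi}(\xi)+\tfrac12 H_{0,\xi}(\xi)t$, $y_\xi(\xi,t)=y_{0,\xi}(\xi)+U_{0,\xi}(\xi)t+\tfrac14 H_{0,\xi}(\xi)t^2$, $V_\xi(\xi,t)=H_{0,\xi}(\xi)$, while for $t\geq\tau(\xi)$ one has $U_\xi(\xi,t)=\tfrac12 V_\xi(\xi,\tau(\xi))(t-\tau(\xi))$, $y_\xi(\xi,t)=\tfrac14 V_\xi(\xi,\tau(\xi))(t-\tau(\xi))^2$ and $V_\xi(\xi,t)=(1-\alpha(y(\xi,\tau(\xi))))H_{0,\xi}(\xi)$.

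First I would establish pointwise continuity: for a.e. $\xi$ the maps $t\mapsto g(X(\xi,t))$, $t\mapsto g_2(X(\xi,t))$, $t\mapsto g_3(X(\xi,t))$ are continuous. Away from the breaking time these are compositions of the continuous (indeed polynomial in $t$) Lagrangian variables with the Lipschitz map $\alpha$, hence continuous. The only place where a jump could occur is at $t=\tau(\xi)$, where $\xi$ leaves $\Omega_d(X(t))$ and enters $\Omega_c(X(t))$; recall $\Omega_c(X_0)\subseteq\Omega_c(X(t))$, $\cap_{t\geq0}\Omega_d(X(t))=\emptyset$, and that on $\Omega_d(X_0)$ necessarily $H_{0,\xi}>0$, so $0<\tau(\xi)<\infty$. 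As $t\uparrow\tau(\xi)$ both $U_\xi\to 0$ and $y_\xi\to 0$, so the $\Omega_d$-branch of $g$ tends to $(1-\alpha(y(\xi,\tau(\xi))))H_{0,\xi}(\xi)$, which is exactly the value $V_\xi(\xi,\tau(\xi))$ taken by the $\Omega_c$-branch $g=y_\xi+V_\xi$ at $t=\tau(\xi)$, where $y_\xi=0$; and the $\Omega_d$-branches of $g_2$ and $g_3$, being multiples of $U_\xi$, tend to $0$, matching their $\Omega_c$-value. Thus $g,g_2,g_3$ were built precisely so as to be continuous across breaking.

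Next I would produce a single majorant $G\in L^2(\R)$ with $|g(X(\xi,t))-1|\leq G(\xi)$ for all $t\in[0,T]$, and analogous $L^2$ majorants for $g_2,g_3$. Writing $\Psi(\xi)=|y_{0,\xi}(\xi)-1|+|U_{0,\xi}(\xi)|T+\tfrac14 H_{0,\xi}(\xi)T^2$, the pre-breaking formula gives $|y_\xi(\xi,t)-1|\leq\Psi(\xi)$ with $\Psi\in L^2(\R)$, since $y_{0,\xi}-1,U_{0,\xi},H_{0,\xi}\in L^2(\R)$. On $\{\tau>T\}$ one stays in a single branch with $0\leq V_\xi\leq H_{0,\xi}$, whence $|g-1|\leq\Psi+H_{0,\xi}$; for $g_2,g_3$ one uses $|U_\xi(\xi,t)|\leq|U_{0,\xi}(\xi)|+\tfrac12 H_{0,\xi}(\xi)T$ and the uniform bound $\|U(\cdot,t)\|_\infty\leq\|U_0\|_\infty+\tfrac34\|H_0\|_\infty T$ (from $|U_t|\leq\tfrac34\|H_0\|_\infty$), again yielding $L^2$ majorants. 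The delicate region is $\{\tau\leq T\}$: there, after breaking, $g-1=(y_\xi-1)+V_\xi$ with $y_\xi\leq\tfrac14\|H_{0,\xi}\|_\infty T^2$ and $V_\xi\leq\|H_{0,\xi}\|_\infty$, so $|g-1|$ is only controlled by a constant, not a priori square-integrable over $\R$. This is the main obstacle, and it is exactly what Corollary~\ref{cor:size of broken set} resolves: $\{\tau\leq T\}$ has finite measure, so the constant bound is square-integrable there. Hence $G:=\Psi+H_{0,\xi}+C\,\mathbf{1}_{\{\tau\leq T\}}\in L^2(\R)$ for a suitable constant $C$; for $g_2,g_3$ the post-breaking branch is $0$, so the indicator term is not even needed.

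Finally, for $t_n\to t_0$ in $[0,T]$ I have $g(X(\xi,t_n))\to g(X(\xi,t_0))$ for a.e. $\xi$ together with $|g(X(\xi,t_n))-g(X(\xi,t_0))|^2\leq 4G(\xi)^2\in L^1(\R)$, so the dominated convergence theorem gives $\|g(X(\cdot,t_n))-g(X(\cdot,t_0))\|_2\to 0$; the identical argument applies to $g_2$ and $g_3$. The only genuinely nontrivial points are the branch-switch bookkeeping at $t=\tau(\xi)$ and the use of Corollary~\ref{cor:size of broken set} to tame the post-breaking set; everything else is routine estimation.
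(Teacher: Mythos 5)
Your argument is correct and follows essentially the same route as the paper: pointwise continuity in $t$ for a.e.\ $\xi$, with the branch values of $g,g_2,g_3$ matching across $t=\tau(\xi)$ because $U_\xi,y_\xi\to 0$ there, followed by dominated convergence with a time-uniform $L^2(\R)$ majorant. The only difference is cosmetic: where the paper writes down an explicit dominating function $k(\xi)$ built from $y_{0,\xi}$, $U_{0,\xi}$, $H_{0,\xi}$, you instead add a constant times $\mathbf{1}_{\{\tau\leq T\}}$ and invoke Corollary~\ref{cor:size of broken set} to see it is square-integrable, which handles the post-breaking region cleanly and is a perfectly valid (arguably more transparent) way to obtain the majorant.
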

\begin{proof}
For $t\neq\tau(\xi)$ the functions are pointwise continuous since for $\xi$ given $U$, $U_\xi$, $y_\xi$, $H_\xi$, $V_\xi$, $y$ and $\alpha$ are continuous functions when $t\neq\tau(\xi)$. What remains is to prove that they are pointwise continuous at $t=\tau(\xi)$. We look at $g$ first. By \eqref{eq:lim U_xi y_xi} and \eqref{def:V},
\begin{align}
\lim_{t\uparrow\tau(\xi)}g(X(\xi,t)) & = \lim_{t\uparrow\tau(\xi)}\big( y_\xi(\xi,t)+(1-\alpha(y(\xi,t)))H_{0,\xi}(\xi)\big)\nonumber\\
	&= y_\xi(\xi,\tau(\xi))+\big(1-\alpha(y(\xi,\tau(\xi)))\big)H_{0,\xi}(\xi)\nonumber\\
	&= y_\xi(\xi,\tau(\xi))+V_\xi(\xi,\tau(\xi))\nonumber\\
	&= \lim_{t\downarrow\tau(\xi)}\left[y_\xi(\xi,t) + V_\xi(\xi,t)\right].
\end{align}
From \eqref{eq:lim U_xi y_xi} we have that $U_\xi(\xi,t)\rightarrow 0$ as $t\rightarrow\tau(\xi)$, and hence $t\mapsto g_2(X(\xi,t))$ and $t\mapsto g_3(X(\xi,t))$ are continuous at $t=\tau(\xi)$. 

Since $g(X(\xi,t'))\rightarrow g(X(\xi,t))$ pointwise as $t'$ tends to $t$, we have by the dominated convergence theorem \begin{align}
\lim_{t'\rightarrow t}\int_\R \left(g(X(\xi,t'))-g(X(\xi,t))\right)^2\:d\xi = \int_\R \lim_{t'\rightarrow t}\left(g(X(\xi,t'))-g(X(\xi,t))\right)^2\:d\xi = 0,
\end{align}
if we can find a function $k(\xi)\in L^1(\R)$ such that $|g(X(\xi,t'))-g(X(\xi,t))|^2\leq k(\xi)$ for a.e $\xi$. Therefore recall \eqref{characteristic_system_a_dissipative}, which implies that 
\begin{equation}
\vert U_\xi(\xi,t)\vert \leq \vert U_{0,\xi}(\xi)\vert +\frac12 tH_{0,\xi}(\xi)
\end{equation}
and 
\begin{equation}
\vert y_\xi(\xi,t)-1\vert \leq \vert y_{0,\xi}(\xi)-1\vert +t\vert U_{0,\xi}(\xi)\vert +\frac14 t^2H_{0\xi}(\xi).
\end{equation}
Thus one possible choice for $k(\xi)$ is
\begin{equation}
k(\xi)= (2\vert y_{0,\xi}(\xi)\vert + 2T\vert U_{0,\xi}(\xi)\vert + (2+\frac12T^2)H_{0,\xi}(\xi))^2
\end{equation}
where $T=2t$.
The same argumentation holds for $g_2$ and $g_3$.
\end{proof}

To simplify the notation we make the following definition.
\begin{definition}
Given $X=(y,U,H,r,V)\in \F^\alpha$, define $Z=(y_{\xi},U_{\xi},H_{\xi},r)$.
\end{definition}
We start by introducing a natural, preliminary metric on $\F^\alpha$ which will form the basis for establishing the Lipschitz stability  later on.
\begin{definition}
Let $\tilde{d}:\F^\alpha\times \F^\alpha\rightarrow[0,\infty)$ be defined by
\begin{align}
\tilde{d}(X,\bar{X}) &= \|y-\bar{y}\|_{\infty} +\|U-\bar{U}\|_{\infty} + \|\alpha'\|_\infty\|UH_\xi-\bar U\bar H_\xi\|_2 \nonumber\\&\quad + \|H_{\xi}-\bar{H}_{\xi}\|_1 + \|Z-\bar{Z}\|_{2} + \|g(X)-g(\bar{X})\|_2 \nonumber\\&\quad + \|g_2(X)-g_2(\bar X)\|_2 + \|g_3(X)-g_3(\bar X)\|_2.
\end{align}
\end{definition}
Note that $\tilde{d}$ defines a metric on $\F^\alpha$ since $g$, $g_2$, and $g_3$ are well defined. The metric $\tilde d$ will turn out to be a Lipschitz continuous with respect to time for carefully selected initial data in $\F^\alpha$. The Lipschitz constant will depend on the total energy of the initial data and hence we will have to restrict our attention to subsets of $\F^\alpha$ whose elements have bounded total energy. 

\begin{definition}
Let $\F^{\alpha,M}$ and $\F_0^{\alpha,M}$ be the following closed subsets of $\F^\alpha$
\begin{align}
\F^{\alpha,M} &= \left\{X\in\F^\alpha\mid \|H\|_{\infty} \leq M\right\},\\
\F_0^{\alpha,M} &= \F_0^\alpha \cap\F^{\alpha,M}.
\end{align}
Correspondingly, denote by $\D^{\alpha,M}$ the subset of $\D^\alpha$ given by,
\begin{equation}
\D^{\alpha,M} = \left\{(u,\rho,\nu,\mu)\in\D^\alpha\mid\nu(\R)\leq M\right\}.
\end{equation}
\end{definition}
Note that the mappings $M$ and $L$ respect the energy bound,
\begin{align*}
L(\D^{\alpha,M}) &= \F_0^{\alpha,M},\\
M(\F^{\alpha,M}) &= \D^{\alpha,M}.
\end{align*}

The construction $\tilde d$ might seem arbitrary, but it is connected to Eulerian coordinates as shown in the following remark.
\begin{remark}
The terms in the definition of $\tilde d$ are natural in the sense that for smooth $(u,\rho,\nu,\mu)$ the various terms can be translated to Eulerian coordinates. In particular for $x=y(\xi)$ we have
\begin{align*}
y_\xi(\xi) &= \frac{1}{1+u_x(x)^2+\rho(x)^2},\\
H_\xi(\xi) &= \frac{u_x(x)^2+\rho(x)^2}{1+u_x(x)^2+\rho(x)^2},\\
U_\xi(\xi) &= \frac{u_x(x)}{1+u_x(x)^2+\rho(x)^2},\\
r(\xi) &= \frac{\rho(x)}{1+u_x(x)^2+\rho(x)^2},
\end{align*}
and thus
\begin{align*}
U(\xi)H_\xi(\xi) &= u(x)\frac{u_x(x)^2+\rho(x)^2}{1+u_x(x)^2+\rho(x)^2},\\
\alpha(y(\xi))H_\xi &= \alpha(x)\frac{u_x(x)^2+\rho(x)^2}{1+u_x(x)^2+\rho(x)^2},\\
\|\alpha'\|_\infty H_\infty U_\xi(\xi) &= \|\alpha'\|_\infty \nu(\R)\frac{u_x(x)}{1+u_x(x)^2+\rho(x)^2},\\
\|\alpha'\|_\infty U(\xi) U_\xi(\xi) &= \|\alpha'\|_\infty u(x)\frac{u_x(x)}{1+u_x(x)^2+\rho(x)^2}.
\end{align*}
\end{remark}

We will now construct a metric on $\F_0^\alpha$ that renders the flow Lipschitz continuous with respect to the initial data. The metric $\tilde d$ will unfortunately give a positive distance between $X$ and $X\bullet f$, even though they will map to the same element in Eulerian coordinates via the mapping $M$. Following \cite{LipCHline}, we minimize the distance over all possible relabelings and define $J:\F^\alpha\times \F^\alpha\rightarrow \R$ by
\begin{equation}
J(X,\bar{X}) = \inf_{f,g\in G}\big(\tilde d(X \bullet f,\bar{X})+ \tilde d(X,\bar{X}\bullet g)\big).
\end{equation}
Now, $J$ will not separate $X$ and $X\bullet f$. However $J$ is not a metric, as the triangle inequality fails. One can obtain a metric by summing over finite sequences in $\F_0^{\alpha,M}$ as follows.
\begin{definition}
Let $X,\bar{X}\in\F_{i,0}^{\alpha,M}$, then define $d_M:[0,\infty)\times\F_{i,0}^{\alpha,M}\times\F_{i,0}^{\alpha,M}\rightarrow \R$ by
\begin{equation}
d_M(t,X,\bar{X}) = \inf\sum_{n=1}^N J(\Pi S_t(X_{n-1}),\Pi S_t(X_n)),
\end{equation}
where the infimum is taken over all finite sequences $\{X_n\}_{n=0}^N$ in $\F_{i,0}^{\alpha,M}$ such that the endpoints $X_0$ and $X_N$ satisfy
\begin{align}
X &=X_0,\\
\bar{X} &= X_N.
\end{align}
\end{definition}
It is not at all clear that $d_M(t,X,\bar X)$ only vanishes when $X=\bar{X}$. The purpose of the next lemma is to assert that we have a positive lower bound on $d_M(t,X,\bar{X})$ when $X$ differs from $\bar{X}$.
\begin{lemma}[{\cite[Lemma 3.2]{LipCHline}}]
\label{lower_bound_d_lemma}
Given $X$, $\bar X\in \F_{i,0}^{\alpha,M}$ and $t\geq 0$, let $X_1(t)=\Pi S_t(X)$ and $\bar X_1(t)=\Pi S_t(\bar X)$, then 
\begin{equation}
\label{lower_bound_d}
\|y_1(t)-\bar{y}_1(t)\|_{\infty}+\|U_1(t)-\bar{U}_1(t)\|_{\infty}+\|H_1(t)-\bar{H}_1(t)\|_{\infty} \leq 2d_M(t,X,\bar{X}).
\end{equation}
\end{lemma}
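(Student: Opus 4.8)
The plan is to exploit that the right-hand side is built, through $J$ and $d_M$, to be invariant under relabeling, whereas the three terms on the left already form a quantity obeying the triangle inequality. Write $\rho(X,\bar X)=\|y-\bar y\|_\infty+\|U-\bar U\|_\infty+\|H-\bar H\|_\infty$, so that the left-hand side of \eqref{lower_bound_d} is exactly $\rho(\Pi S_t(X),\Pi S_t(\bar X))$. Since $d_M(t,X,\bar X)$ is the infimum over all finite chains $\{X_n\}_{n=0}^N$ from $X$ to $\bar X$ of $\sum_{n=1}^N J(\Pi S_t(X_{n-1}),\Pi S_t(X_n))$, and since $\rho(\Pi S_t(X),\Pi S_t(\bar X))\le\sum_{n=1}^N\rho(\Pi S_t(X_{n-1}),\Pi S_t(X_n))$ by the triangle inequality, it suffices to establish the single-link estimate $\rho(Y,\bar Y)\le 2J(Y,\bar Y)$ for all $Y,\bar Y\in\F_0^\alpha$; telescoping along the chain and taking the infimum then yields \eqref{lower_bound_d} at once.

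For the single-link estimate I would use that $J(Y,\bar Y)=\inf_{f,g\in G}\big(\tilde d(Y\bullet f,\bar Y)+\tilde d(Y,\bar Y\bullet g)\big)$ with both summands nonnegative, so it is enough to bound $\rho(Y,\bar Y)$ by $2\big(\tilde d(Y\bullet f,\bar Y)+\tilde d(Y,\bar Y\bullet g)\big)$ for arbitrary $f,g\in G$ and then pass to the infimum. Two facts drive the estimate. First, $\tilde d$ dominates the relevant sup-norms: by definition $\tilde d(Y\bullet f,\bar Y)\ge\|y\circ f-\bar y\|_\infty+\|U\circ f-\bar U\|_\infty$, and since $H\in E_1$ vanishes at $-\infty$ (so does $H\circ f$, as $f(\xi)\to-\infty$), one has $\|H\circ f-\bar H\|_\infty\le\|(H\circ f)_\xi-\bar H_\xi\|_1$, which is itself a term of $\tilde d(Y\bullet f,\bar Y)$. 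Second, on $\F_0^\alpha$ relabelings can be undone at controlled cost: there $y+H=\Id$, so $\|f-\Id\|_\infty=\|(y\circ f+H\circ f)-(\bar y+\bar H)\|_\infty\le\|y\circ f-\bar y\|_\infty+\|H\circ f-\bar H\|_\infty$, while the constraints $y_\xi+H_\xi=1$ and $y_\xi V_\xi=U_\xi^2+r^2\le y_\xi H_\xi$ (property $(iii)$ and $(iv)$ of Definition~\ref{def_F}) yield the uniform bounds $0\le y_\xi\le1$, $0\le H_\xi\le1$ and $|U_\xi|\le\tfrac12$, hence $\|y-y\circ f\|_\infty\le\|f-\Id\|_\infty$, $\|H-H\circ f\|_\infty\le\|f-\Id\|_\infty$ and $\|U-U\circ f\|_\infty\le\tfrac12\|f-\Id\|_\infty$.

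Combining the two ingredients through the triangle inequality, for instance $\|y-\bar y\|_\infty\le\|y-y\circ f\|_\infty+\|y\circ f-\bar y\|_\infty\le\|f-\Id\|_\infty+\|y\circ f-\bar y\|_\infty$, expresses each of the three sup-norm differences constituting $\rho(Y,\bar Y)$ in terms of the relabeled quantities $\|y\circ f-\bar y\|_\infty$, $\|U\circ f-\bar U\|_\infty$, $\|H\circ f-\bar H\|_\infty$ and their $g$-counterparts, every one of which is controlled by $\tilde d(Y\bullet f,\bar Y)$, respectively $\tilde d(Y,\bar Y\bullet g)$. Recalling that $\|H-\bar H\|_\infty=\|y-\bar y\|_\infty$ on $\F_0^\alpha$, one then distributes the three contributions of $\rho$ between the two summands of $J$ — using the $f$-relabeling to absorb one part and the $g$-relabeling the other — to obtain the single-link estimate.

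The heart of the argument, and the only genuinely delicate point, is this relabeling bookkeeping: the bound must hold \emph{uniformly} over all $f,g\in G$ so that it survives the infimum defining $J$, and the three sup-norm contributions must be apportioned carefully between the two summands of $J$ so that the accumulated constant does not exceed the factor recorded in \cite[Lemma 3.2]{LipCHline}. This is precisely where passage to the section $\F_0^\alpha$ is indispensable: the normalization $y+H=\Id$ both pins down the labeling (so that distinct points of $\F_0^\alpha$ are genuinely distinct Eulerian data) and forces the uniform Lipschitz constants for $y$, $U$ and $H$ that make $\|f-\Id\|_\infty$ controllable by the relabeled differences. The precise combinatorics of the constant are carried out exactly as in the cited reference to which the statement is attributed.
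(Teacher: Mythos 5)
Your proposal is correct and follows the same route as the source the paper cites for this lemma (the paper itself gives no proof beyond the reference to \cite[Lemma 3.2]{LipCHline}): reduce to a single-link bound $\rho(Y,\bar Y)\le 2J(Y,\bar Y)$ on $\F_0^\alpha$ via the triangle inequality for $\rho$, then control $\|f-\Id\|_\infty$ by $\|y\circ f-\bar y\|_\infty+\|H\circ f-\bar H\|_\infty$ using $y+H=\Id$, the bound $\|H\circ f-\bar H\|_\infty\le\|(H\circ f)_\xi-\bar H_\xi\|_1$, and the Lipschitz constants $y_\xi,H_\xi\le 1$, $|U_\xi|\le\tfrac12$ coming from Definition~\ref{def_F}. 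The bookkeeping you defer does close: your estimates give $\rho(Y,\bar Y)\le\tfrac72\,\tilde d(Y\bullet f,\bar Y)$ and, by symmetry, $\rho(Y,\bar Y)\le\tfrac72\,\tilde d(Y,\bar Y\bullet g)$, so averaging yields $\rho(Y,\bar Y)\le\tfrac74\big(\tilde d(Y\bullet f,\bar Y)+\tilde d(Y,\bar Y\bullet g)\big)\le 2J(Y,\bar Y)$ after taking the infimum over $f,g\in G$.
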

Lemma \ref{lower_bound_d_lemma} states that if the distance between $X_1(t)=\Pi S_t(X)$ and $\bar X_1(t)=\Pi S_t(\bar{X})$ equals zero, then $(y_1,U_1,H_1)$ and $(\bar{y}_1,\bar{U}_1,\bar{H}_1)$ coincide. Still, $r_1$ and $\bar{r}_1$, and $V_1$ and $\bar V_1$ could, in principle, differ. The following lemmas shows that this cannot be the case, and consequently $d_M$ is a metric on $\Pi S_t(\F_{i,0}^{\alpha,M})$. 

\begin{lemma}[{\cite[A weaker form of Lemma 6.4]{LipCH}}]
\label{lemma:r bar r}
Given $X,\bar{X}$ in $\F_{i,0}^{\alpha,M}$ and $t\geq 0$, let $X_1(t)=\Pi S_t(X)$ and $\bar X_1(t)=\Pi S_t(\bar X)$, then $d_M(t,X,\bar{X}) = 0$ implies that $r_1(t)=\bar{r}_1(t)$.
\end{lemma}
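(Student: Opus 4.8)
The plan is to start from the conclusion of Lemma~\ref{lower_bound_d_lemma}: since $d_M(t,X,\bar X)=0$, the left-hand side of \eqref{lower_bound_d} vanishes, so $y_1(t)=\bar y_1(t)$, $U_1(t)=\bar U_1(t)$ and $H_1(t)=\bar H_1(t)$; in particular $y_{1,\xi}=\bar y_{1,\xi}$, $U_{1,\xi}=\bar U_{1,\xi}$ and $H_{1,\xi}=\bar H_{1,\xi}$ almost everywhere. The goal is to upgrade this to $r_1(t)=\bar r_1(t)$. I would first establish the magnitude statement $r_1^2=\bar r_1^2$ (property~$(iii)$ of Definition~\ref{def_F} only sees $r^2$) and then match signs, splitting $\R$ according to the decomposition $\Omega_c$, $\Omega_d$ of Definition~\ref{def_Omega}. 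The crucial observation is that $(y_1,U_1,H_1)$ alone does \emph{not} determine $r_1$: a point with $U_{1,\xi}\geq 0$ and $y_{1,\xi}H_{1,\xi}>U_{1,\xi}^2$ may either be conservative, with $V_{1,\xi}=H_{1,\xi}$ and $r_1\neq 0$, or already broken, with $r_1=0$ and $V_{1,\xi}=U_{1,\xi}^2/y_{1,\xi}$. Hence extra information from the metric is needed, and this is the main obstacle.

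On the dissipative regions the argument is direct and needs no such input. If $\xi\in\Omega_d(X_1(t))$ then $r_1=0$ and wave breaking has not yet occurred, so $V_{1,\xi}=H_{1,\xi}$, and $(iii)$ gives $y_{1,\xi}H_{1,\xi}=U_{1,\xi}^2$. Using $(iii)$ and $(iv)$ for $\bar X_1(t)$ together with $y_{1,\xi}=\bar y_{1,\xi}$, $U_{1,\xi}=\bar U_{1,\xi}$, $H_{1,\xi}=\bar H_{1,\xi}$ then yields $\bar r_1^2\leq \bar y_{1,\xi}\bar H_{1,\xi}-\bar U_{1,\xi}^2=y_{1,\xi}H_{1,\xi}-U_{1,\xi}^2=0$, so $\bar r_1=0=r_1$; by symmetry the same holds on $\Omega_d(\bar X_1(t))$. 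Thus $r_1=\bar r_1=0$ on $\Omega_d(X_1(t))\cup\Omega_d(\bar X_1(t))$, and the only genuinely ambiguous set is $\Omega_c(X_1(t))\cap\Omega_c(\bar X_1(t))$.

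This is where the function $g$ enters, and resolving it is the hard part. The point is that $g$ was designed to depend continuously across wave breaking, with $g=y_\xi+V_\xi$ on $\Omega_c$. I would show that $d_M(t,X,\bar X)=0$ forces $g(X_1(t))=g(\bar X_1(t))$ almost everywhere, by the same chain-and-relabeling argument that underlies Lemma~\ref{lower_bound_d_lemma}. What makes this possible is that $g$ transforms as a density under relabeling, $g(X\bullet f)=(g(X)\circ f)f_\xi$, so that its antiderivative $\xi\mapsto\int_{-\infty}^{\xi}g(X)$ transforms like $y$, $U$ and $H$ (namely by composition with $f$) and therefore telescopes along finite sequences in $\F_{i,0}^{\alpha,M}$. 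Granting $g(X_1(t))=g(\bar X_1(t))$, on $\Omega_c(X_1(t))\cap\Omega_c(\bar X_1(t))$ I obtain $V_{1,\xi}=g(X_1(t))-y_{1,\xi}=g(\bar X_1(t))-\bar y_{1,\xi}=\bar V_{1,\xi}$, and then $(iii)$ with $y_{1,\xi}=\bar y_{1,\xi}$, $U_{1,\xi}=\bar U_{1,\xi}$ gives $r_1^2=y_{1,\xi}V_{1,\xi}-U_{1,\xi}^2=\bar r_1^2$. In the case $\alpha\equiv 1$ this step is unnecessary, since property~$(vi)$ forces $V_{1,\xi}=H_{1,\xi}\mathbf{1}_{\{y_{1,\xi}>0\}}=\bar V_{1,\xi}$ outright. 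Combining the regions gives $|r_1|=|\bar r_1|$ almost everywhere.

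It remains to match signs. On $\{y_{1,\xi}=0\}$ both $r_1$ and $\bar r_1$ vanish by $(iii)$, so there is nothing to prove; moreover $r_1\neq 0$ forces $y_{1,\xi}>0$, again by $(iii)$. On $\{y_{1,\xi}>0\}$ the map $y_1$ is locally strictly increasing, so the only relabelings $f\in G$ preserving $y_1$ act trivially there; hence the contribution $\|r_1-\bar r_1\|_2\leq\|Z-\bar Z\|_2$ to $\tilde d$ cannot be reduced by relabeling and must vanish once $d_M(t,X,\bar X)=0$, giving $r_1=\bar r_1$ with the correct sign. The two delicate points — the extraction of $g(X_1(t))=g(\bar X_1(t))$ and of the \emph{signed} identity for $r_1$ out of the double infimum (over chains and over relabelings) defining $d_M$ — are precisely the step carried out for the related metric in \cite[Lemma 6.4]{LipCH}, of which the present statement is a weaker form.
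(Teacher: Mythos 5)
The paper contains no in-house proof of this lemma: it is imported as a weaker form of \cite[Lemma 6.4]{LipCH}. The argument behind that result does not go through $|r_1|=|\bar r_1|$ at all; it applies the same telescoping device as Lemma~\ref{lower_bound_d_lemma} to the \emph{primitive} $\xi\mapsto\int r\,d\eta$, which transforms by composition under relabeling (because $r$ transforms as a density) and whose increments satisfy $\bigl|\int_{\xi'}^{\xi}(r\circ f\,f_\xi-\bar r)\,d\eta\bigr|\le\sqrt{|\xi-\xi'|}\,\|Z(X\bullet f)-\bar Z\|_2\le\sqrt{|\xi-\xi'|}\,\tilde d(X\bullet f,\bar X)$. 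Letting the chain sum tend to zero (which forces all the relabelings in a near-optimal chain to be uniformly close to the identity, since $y+H=\Id$ on $\F_0^{\alpha}$) gives equality of the signed primitives of $r_1$ and $\bar r_1$, hence $r_1(t)=\bar r_1(t)$ in one step. Your route is genuinely different, and its decisive step does not hold up.

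The gap is your sign-matching paragraph. The infimum defining $d_M$ runs over finite chains of \emph{arbitrary} intermediate elements of $\F_{i,0}^{\alpha,M}$ and over relabelings of each link; none of these relabelings preserves $y_1$, so the observation that only trivial relabelings preserve $y_1$ on $\{y_{1,\xi}>0\}$ is not relevant. What a small chain sum actually gives is that each $f_n$ is close to the identity in $L^\infty$; this gives no control on $f_{n,\xi}$ in $L^\infty$, and the density-type quantity $\|r\circ f\,f_\xi-\bar r\|_2$ does not chain across links (composing two links produces a factor $\|f_{n,\xi}\|_\infty^{1/2}$ that the metric does not control). So the assertion that ``the contribution $\|r_1-\bar r_1\|_2$ cannot be reduced by relabeling and must vanish'' does not follow; this is exactly the obstruction that forces one to pass to primitives. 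Note also that if this step were valid it would prove the lemma outright on $\{y_{1,\xi}>0\}$, with $(iii)$ of Definition~\ref{def_F} handling $\{y_{1,\xi}=0\}$, so your entire first part would be redundant. That first part is essentially sound but also incomplete as written: the claim $g(X_1(t))=g(\bar X_1(t))$ needs the same telescoping-of-primitives machinery (with increments $\int_{\xi'}^{\xi}g$, since $g-1$ is only in $L^2$ and the primitive anchored at $-\infty$ need not converge, and with the base points tracked through the moving relabelings), and that machinery, applied to $r$ instead of $g$, already yields the full signed conclusion.
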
 
\begin{lemma}[]
Let $X_0,\bar X_0 \in \F_0^\alpha$, and $X_1(t) = \Pi S_t(X_0), \bar X_1(t) = \Pi S_t(\bar X_0)$. If $d_M(t,X_0,\bar X_0) = 0$ then $V_1(t)=\bar V_1(t)$.
\end{lemma}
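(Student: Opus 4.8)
The plan is to exploit that, by the two preceding lemmas, vanishing of $d_M$ already pins down four of the five Lagrangian variables, and that the constraints in Definition~\ref{def_F} together with the explicit form \eqref{def:V} of $V$ leave no freedom in the fifth. Since $d_M$ is only defined on $\F_{i,0}^{\alpha,M}$, I read the hypothesis as $X_0,\bar X_0\in\F_{i,0}^{\alpha,M}$, so that \eqref{def:V} applies to the underlying solutions.

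First I would reduce the claim to a pointwise statement about derivatives. By Lemma~\ref{lower_bound_d_lemma}, $d_M(t,X_0,\bar X_0)=0$ forces $y_1(t)=\bar y_1(t)$, $U_1(t)=\bar U_1(t)$ and $H_1(t)=\bar H_1(t)$, while Lemma~\ref{lemma:r bar r} gives $r_1(t)=\bar r_1(t)$. As $y_1,U_1,H_1\in W^{1,\infty}(\R)$, differentiating yields $y_{1,\xi}=\bar y_{1,\xi}$, $U_{1,\xi}=\bar U_{1,\xi}$, $H_{1,\xi}=\bar H_{1,\xi}$ and $r_1=\bar r_1$ almost everywhere. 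Since $V_1,\bar V_1\in E_1$ vanish at $-\infty$ and are absolutely continuous, $V_1=\bar V_1$ follows once $V_{1,\xi}=\bar V_{1,\xi}$ is established a.e., and I would split $\R$ according to the sign of $y_{1,\xi}$. On $\{\,y_{1,\xi}>0\,\}$ property $(iii)$ of Definition~\ref{def_F} applies to both, so
\begin{equation*}
V_{1,\xi}=\frac{U_{1,\xi}^2+r_1^2}{y_{1,\xi}}=\frac{\bar U_{1,\xi}^2+\bar r_1^2}{\bar y_{1,\xi}}=\bar V_{1,\xi}\qquad\text{a.e. on }\{\,y_{1,\xi}>0\,\}.
\end{equation*}

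The main obstacle is the set $\{\,y_{1,\xi}=0\,\}$, which may have positive measure and on which $(iii)$ only gives $U_{1,\xi}=0$, $r_1=0$, so that neither $(iii)$ nor $(v)$ determines $V_{1,\xi}$. Here I would pass to the underlying solution. Writing $Y(t)=S_t(X_0)$ and $f_t=(y^Y(\cdot,t)+H^Y(\cdot,t))^{-1}\in G$ (Proposition~\ref{y plus H in G}), we have $X_1(t)=\Pi Y(t)=Y(t)\bullet f_t$, whence $V_{1,\xi}=(V^Y_\xi\circ f_t)(f_t)_\xi$, $H_{1,\xi}=(H^Y_\xi\circ f_t)(f_t)_\xi$, $y_1=y^Y\circ f_t$, and since $(f_t)_\xi>0$ the set $\{\,y_{1,\xi}=0\,\}$ is the $f_t$-preimage of $\{\,y^Y_\xi(\cdot,t)=0\,\}$. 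By \eqref{def:V}, $V^Y_\xi(\eta,t)=H^Y_{0,\xi}(\eta)\bigl(1-\mathbf 1_{\{t\geq\tau(\eta)\}}\alpha(y^Y(\eta,\tau(\eta)))\bigr)$, and a point with $y^Y_\xi(\eta,t)=0$ satisfies $\tau(\eta)\leq t<\infty$; in the regime $\alpha<1$ it satisfies in fact $\tau(\eta)=t$ (for $t>\tau(\eta)$ one has $y^Y_\xi=\tfrac14 V^Y_\xi(\eta,\tau(\eta))(t-\tau(\eta))^2>0$), so $y^Y(\eta,\tau(\eta))=y^Y(\eta,t)$, while in the regime $\alpha\equiv1$ both $\alpha$-values equal $1$. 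In either case
\begin{equation*}
V^Y_\xi(\eta,t)=\bigl(1-\alpha(y^Y(\eta,t))\bigr)H^Y_\xi(\eta,t)\qquad\text{on }\{\,y^Y_\xi(\cdot,t)=0\,\}.
\end{equation*}
This relation is invariant under relabeling, since $V^Y_\xi$ and $H^Y_\xi$ both acquire the factor $(f_t)_\xi$ while $y^Y$ is merely composed with $f_t$; transporting it through $f_t$ gives $V_{1,\xi}=(1-\alpha(y_1))H_{1,\xi}$ on $\{\,y_{1,\xi}=0\,\}$, and the identical computation for $\bar X_1(t)$ gives $\bar V_{1,\xi}=(1-\alpha(\bar y_1))\bar H_{1,\xi}$. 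As $y_1=\bar y_1$ and $H_{1,\xi}=\bar H_{1,\xi}$, the two agree on $\{\,y_{1,\xi}=0\,\}$ as well.

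Combining the two sets gives $V_{1,\xi}=\bar V_{1,\xi}$ almost everywhere, and integrating from $-\infty$ using $V_1,\bar V_1\in E_1$ yields $V_1(t)=\bar V_1(t)$. I expect the delicate point to be exactly the argument on $\{\,y_{1,\xi}=0\,\}$: one must know that at every point of this set wave breaking is genuinely taking place, so that the retained fraction is precisely $1-\alpha$. This is what \eqref{def:V} encodes for solutions produced by $S_t$, but it is \emph{not} guaranteed by the abstract membership $X_1(t)\in\F_0^\alpha$ together with $(v)$ alone; hence the detour through the underlying solution and its relabeling invariance seems unavoidable.
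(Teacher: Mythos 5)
Your proposal is correct and follows essentially the same route as the paper's proof: reduce via Lemma \ref{lower_bound_d_lemma} and Lemma \ref{lemma:r bar r} to equality of $(y_1,U_1,H_1,r_1)$, use property $(iii)$ of Definition \ref{def_F} on $\{y_{1,\xi}>0\}$, and on $\{y_{1,\xi}=0\}$ use that either $\alpha\equiv 1$ forces $V_{1,\xi}=0$ or, for $\alpha<1$, that $t=\tau_1(\xi)$ there so the evolution gives $V_{1,\xi}=(1-\alpha(y_1))H_{1,\xi}=\bar V_{1,\xi}$, before integrating from $-\infty$. Your extra care in transporting the formula \eqref{def:V} through the relabeling $\Pi$ is a legitimate elaboration of what the paper leaves implicit.
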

\begin{proof}
From Lemma \ref{lower_bound_d_lemma} and \ref{lemma:r bar r} we have that
\begin{equation}
(y_1(t),U_1(t),H_1(t),r_1(t)) = (\bar y_1(t),\bar U_1(t),\bar H_1(t),\bar r_1(t)),
\end{equation}
and thus for almost all $\xi\in\R$
\begin{align}
y_{1,\xi}(\xi,t) V_{1,\xi}(\xi,t) &= U_{1,\xi}(\xi,t)^2+r_1(\xi,t)^2\nonumber\\
	&= \bar U_{1,\xi}(\xi,t)^2+\bar r_1(\xi,t)^2\nonumber\\
	&= \bar y_{1,\xi}(\xi,t)\bar V_{1,\xi}(\xi,t).
\end{align}
Thus we have that $\bar V_{1,\xi}(t)= V_{1,\xi}(t)$ almost everywhere in the set $\{\xi\in\R\mid y_{1,\xi}(\xi,t)>0\}$. We turn our attention to the set $\{\xi\mid y_{1,\xi}(\xi,t) = 0\}$. Assume that $\alpha=1$. Then $y_{1,\xi}(\xi,t) = 0$ implies that $V_{1,\xi}(\xi,t) = 0$ and thus $V_{1,\xi}(t)=\bar V_{1,\xi}(t)$ almost everywhere. Assume that $0\leq\alpha<1$. Then if $y_{1,\xi}(\xi,t) = 0$ we have that $t=\tau_1(\xi)$. There are two cases to consider. If $t=0$ we have by definition that $V_{1,\xi}(\xi,t) = H_{1,\xi}(\xi,t)= \bar H_{1,\xi}(\xi,t) = \bar V_{1,\xi}(\xi,t)$. If $t>0$, then we have from the time evolution operator $S_t$ that $V_{1,\xi}(\xi,t) = (1-\alpha(y_1(\xi,\tau_1(\xi))))H_{1,\xi}(\xi,t) = \bar V_{1,\xi}(\xi,t)$. Thus $V_{1,\xi}(\xi,t) = \bar V_{1,\xi}(\xi,t)$ for almost all $\xi\in\R$, and since $\displaystyle \lim_{\xi\rightarrow-\infty}V_1(\xi,t) = 0 = \lim_{\xi\rightarrow-\infty}\bar V_1(\xi,t)$ we have that $V_1(\xi,t) = \bar V_1(\xi,t)$.
\end{proof} 

In order to estimate the time evolution of $d_M(t,X_0,\bar X_0)$, the following lemma is essential. 
\begin{lemma}[{\cite[Lemma 4.8]{N}}]
\label{lemma J relabeling f}
If $X_0,\bar{X}_0\in\F_{i,0}^{\alpha,M}$, then
\begin{equation}
J\big(\Pi S_t(X_0),\Pi S_t(\bar{X}_0)\big) \leq e^{\frac 12t} J\big(S_t(X_0),S_t(\bar{X}_0)\big).
\end{equation}
\end{lemma}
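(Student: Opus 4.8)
The plan is to exploit that $\Pi S_t(X_0)$ and $\Pi S_t(\bar X_0)$ differ from $S_t(X_0)$ and $S_t(\bar X_0)$ only by a relabeling, together with the fact that $J$ is built as an infimum over all relabelings. Write $X=S_t(X_0)$ and $\bar X=S_t(\bar X_0)$ with components $(y,U,H,r,V)=X$ and $(\bar y,\bar U,\bar H,\bar r,\bar V)=\bar X$, and set $f=(y+H)^{-1}$, $\bar f=(\bar y+\bar H)^{-1}$. By Proposition \ref{y plus H in G} both $f$ and $\bar f$ lie in $G$, so that $\Pi S_t(X_0)=X\bullet f$ and $\Pi S_t(\bar X_0)=\bar X\bullet\bar f$. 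The same proposition gives $y_\xi+H_\xi\ge e^{-t/2}$ almost everywhere, hence $f_\xi=\big((y_\xi+H_\xi)\circ f\big)^{-1}\le e^{t/2}$, and likewise $\bar f_\xi\le e^{t/2}$. This exponential bound is the only place where $t$ enters, and it is the source of the factor on the right-hand side.

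The key technical step is a scaling estimate: for every $Y,\bar Y\in\F^\alpha$ and every $p\in G$,
\begin{equation}
\tilde d(Y\bullet p,\bar Y\bullet p)\le \|p_\xi\|_\infty^{1/2}\,\tilde d(Y,\bar Y).
\end{equation}
I would verify this term by term. The two $L^\infty$ terms $\|y-\bar y\|_\infty$, $\|U-\bar U\|_\infty$ are invariant under a common relabeling by $p$, since $p$ is a homeomorphism; so is $\|H_\xi-\bar H_\xi\|_1$, because the Jacobian $p_\xi$ is exactly absorbed by the change of variables in the $L^1$ norm (for $p\in G$ one has $\|p_\xi\|_\infty\ge 1$, so these invariant terms are trivially dominated by $\|p_\xi\|_\infty^{1/2}$ times themselves). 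Every remaining term is the $L^2$ norm of a difference of \emph{densities}, i.e.\ of quantities transforming as $q\mapsto (q\circ p)\,p_\xi$: this holds for $UH_\xi-\bar U\bar H_\xi$, for each component of $Z-\bar Z$, and---after checking that $\Omega_c,\Omega_d$ transform covariantly as $\Omega_\iota(Y\bullet p)=p^{-1}(\Omega_\iota(Y))$---also for $g$, $g_2$, $g_3$ (here one uses that $p_\xi>0$ preserves the sign of $U_\xi$ and the zero set of $r$, and that $H_\infty$ together with the values $U$ and $\alpha\circ y$ are relabeling invariant). For a pair of such densities $q$ (for $Y$) and $\bar q$ (for $\bar Y$) the substitution $\eta=p(\xi)$ gives $\int |(q-\bar q)\circ p|^2 p_\xi^2\,\d\xi=\int |q-\bar q|^2\,(p_\xi\circ p^{-1})\,\d\eta\le\|p_\xi\|_\infty\|q-\bar q\|_2^2$, so each such term scales by at most $\|p_\xi\|_\infty^{1/2}$. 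Summing the terms yields the displayed estimate.

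With this in hand I would unwind the definition of $J$ using the right action property $(Y\bullet a)\bullet b=Y\bullet(a\circ b)$, which follows from Definition \ref{definition_G} and the chain rule. Writing
\begin{equation}
J(X\bullet f,\bar X\bullet\bar f)=\inf_{h\in G}\tilde d\big(X\bullet(f\circ h),\bar X\bullet\bar f\big)+\inf_{k\in G}\tilde d\big(X\bullet f,\bar X\bullet(\bar f\circ k)\big),
\end{equation}
I substitute $f\circ h=\phi\circ\bar f$ in the first infimum and $\bar f\circ k=\psi\circ f$ in the second; since $f,\bar f\in G$, the maps $\phi\mapsto f^{-1}\circ\phi\circ\bar f$ and $\psi\mapsto\bar f^{-1}\circ\psi\circ f$ are bijections of $G$, so the two infima are unchanged. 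Each resulting term then has both arguments relabeled by a common element ($\bar f$ in the first, $f$ in the second), so the scaling estimate applies with $\|p_\xi\|_\infty^{1/2}\le e^{t/4}$ and gives
\begin{equation}
J(X\bullet f,\bar X\bullet\bar f)\le e^{t/4}\Big(\inf_{\phi\in G}\tilde d(X\bullet\phi,\bar X)+\inf_{\psi\in G}\tilde d(X,\bar X\bullet\psi)\Big)=e^{t/4}J(X,\bar X)\le e^{\frac12 t}J(X,\bar X),
\end{equation}
which is the claim.

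The main obstacle I anticipate is the bookkeeping in the scaling estimate for the auxiliary functions $g$, $g_2$, $g_3$: one must confirm that they genuinely transform as densities, which requires showing that the partition $\R=\Omega_c(Y)\cup\Omega_d(Y)$ is carried by a relabeling onto the corresponding partition of $Y\bullet p$, and that the $\alpha$-dependent and $H_\infty$-dependent pieces are treated correctly. Everything else is either relabeling invariant or a routine change of variables, and the group-theoretic juggling of the infima is purely formal once the right action identity is in place.
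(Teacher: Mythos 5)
Your argument is correct, and since the paper gives no proof of this lemma (it only cites \cite{N}), the comparison is with the standard argument used there and in the related Lipschitz-metric papers: writing $\Pi$ as relabeling by $(y+H)^{-1}$, bounding its Jacobian by $e^{\frac12 t}$ via Proposition~\ref{y plus H in G}, showing that a common relabeling by $p\in G$ scales $\tilde d$ by at most $\|p_\xi\|_\infty^{1/2}$ (invariant terms versus density-type terms, with $\|p_\xi\|_\infty\geq 1$ handling the former), and reshuffling the infima through the right-action identity --- which is exactly that approach. The only cosmetic remark is that your bookkeeping actually yields the sharper constant $e^{\frac14 t}$ before you relax it to $e^{\frac12 t}$.
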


Thus we estimate the time evolution of $\tilde d(S_t(X_0),S_t(\bar X_0)\bullet f)$ for $X_0,\bar X_0\in\F_{i,0}^{\alpha,M}$ and $f\in G$. First let us see how we can use the function $g$ to bound the time evolution of $\|Z(t)-\bar Z(t)\|_2$.

\begin{lemma}
Let $X(t)$ and $\bar{X}(t)$ be the solutions with initial data $X_0,\bar X_0\in\F_{i}^{\alpha}$, then
\begin{subequations}\label{time_Z_est}
\begin{align}
\|y_\xi(t)-\bar y_\xi(t)\|_2 &\leq \|y_{0,\xi}-\bar y_{0,\xi}\|_2 + \int_0^t\|U_\xi(s)-\bar U_\xi(s)\|_2\:d s,\\
\|U_\xi(t)-\bar U_\xi(t)\|_2 &\leq \|U_{0,\xi}-\bar U_{0,\xi}\|_2+\frac12t\|H_{0,\xi}-\bar H_{0,\xi}\|_2 \nonumber\\
 & + \frac 12\int_0^t\Big(\|g(X(s))-g(\bar{X}(s))\|_2
 +\|y_\xi(s)-\bar y_\xi(s)\|_2 \Big)\:d s,\\
\|H_\xi(t)-\bar H_\xi(t)\|_2 &= \|H_{0,\xi}-\bar H_{0,\xi}\|_2,\\
\|r(t)-\bar r(t)\|_2 &= \|r_0-\bar r_0\|_2.
\end{align}
\end{subequations}
\label{time_Z}
\end{lemma}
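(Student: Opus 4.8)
The plan is to read the evolution of each component of $Z=(y_\xi,U_\xi,H_\xi,r)$ off the Lagrangian system \eqref{characteristic_system_a_dissipative} and its $\xi$-derivative, integrate in time, and estimate the resulting integrals in $L^2(\R)$ by Minkowski's integral inequality. The two algebraically trivial identities come first: since $H_t=0$ and $r_t=0$, both $H_\xi(\xi,t)=H_{0,\xi}(\xi)$ and $r(\xi,t)=r_0(\xi)$ are constant in time, whence $\|H_\xi(t)-\bar H_\xi(t)\|_2=\|H_{0,\xi}-\bar H_{0,\xi}\|_2$ and $\|r(t)-\bar r(t)\|_2=\|r_0-\bar r_0\|_2$ for every $t$.

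For $y_\xi$ I would use $y_{\xi,t}=U_\xi$, which upon integration gives $y_\xi(\xi,t)-\bar y_\xi(\xi,t)=(y_{0,\xi}-\bar y_{0,\xi})(\xi)+\int_0^t(U_\xi-\bar U_\xi)(\xi,s)\,\d s$; taking the $L^2$-norm in $\xi$ and pulling it inside the time integral yields the first estimate. The step for $U_\xi$ starts from $U_{\xi,t}=\tfrac12 V_\xi$, so that $U_\xi(t)-\bar U_\xi(t)=(U_{0,\xi}-\bar U_{0,\xi})+\tfrac12\int_0^t(V_\xi-\bar V_\xi)(s)\,\d s$, and the whole assertion reduces to the bound $\|V_\xi(s)-\bar V_\xi(s)\|_2\le\|H_{0,\xi}-\bar H_{0,\xi}\|_2+\|g(X(s))-g(\bar X(s))\|_2+\|y_\xi(s)-\bar y_\xi(s)\|_2$, since the first term then integrates to the claimed $\tfrac12 t\|H_{0,\xi}-\bar H_{0,\xi}\|_2$.

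The crux is thus to control $V_\xi-\bar V_\xi$ through $g$. Here I would use that, by the definition of $g$ and property $(iii)$ of Definition~\ref{def_F}, one has $V_\xi=g(X)-y_\xi$ on $\Omega_c(X)$, while on $\Omega_d(X)$ one has $V_\xi=H_\xi$ and $g(X)-y_\xi=(1-\alpha(y))H_\xi$; similarly for $\bar X$. Splitting $\R$ according to the phase of the two points $\xi$, the pure cases are immediate: on $\Omega_c(X)\cap\Omega_c(\bar X)$ the difference equals $(g(X)-g(\bar X))-(y_\xi-\bar y_\xi)$, and on $\Omega_d(X)\cap\Omega_d(\bar X)$ it equals exactly $H_\xi-\bar H_\xi$, both of which respect the target bound.

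The main obstacle is the mixed set where one solution has already broken and the other has not, say $\xi\in\Omega_d(X)\cap\Omega_c(\bar X)$. A direct computation there gives $V_\xi-\bar V_\xi=(g(X)-g(\bar X))-(y_\xi-\bar y_\xi)+\alpha(y)H_\xi$, so the residual $\alpha(y)H_\xi$ must be absorbed into the three admissible terms. This is precisely what $g$ was engineered for: the look-ahead value $(1-\alpha)H_\xi$ built into $g$ on $\Omega_d$ makes $g$ continuous across the breaking time (as shown in the proposition preceding this lemma), so that the jump of $V_\xi$ is carried by $g$ rather than appearing as an uncontrolled term; combined with the relation $V_\xi=\kappa(y)H_\xi$ with $\kappa>0$ from property $(v)$ (respectively $(vi)$ when $\alpha\equiv1$), this is what one must exploit to close the bound. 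I expect the careful bookkeeping over this mixed region, together with the use of the Lipschitz continuity of $\alpha$ to compare $\alpha(y)$ with $\alpha(\bar y)$, to be the delicate part of the argument; the estimates for $y_\xi$, $H_\xi$ and $r$ are routine by comparison.
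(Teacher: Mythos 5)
Your treatment of $r$, $H_\xi$ and $y_\xi$, and your reduction of the $U_\xi$-estimate to controlling $\tfrac12\int_0^t(V_\xi-\bar V_\xi)\,\d s$, agree with the paper. The gap is in how you propose to control that integral: you want the pointwise-in-time bound $\|V_\xi(s)-\bar V_\xi(s)\|_2\le\|H_{0,\xi}-\bar H_{0,\xi}\|_2+\|g(X(s))-g(\bar X(s))\|_2+\|y_\xi(s)-\bar y_\xi(s)\|_2$, and this inequality is false precisely on the mixed set you single out. Concretely, suppose at time $s$ the point $\xi$ has already broken for $\bar X$ but not yet for $X$, so $\xi\in\Omega_d(X(s))\cap\Omega_c(\bar X(s))$, and take $\alpha\equiv\tfrac12$, $H_{0,\xi}=\bar H_{0,\xi}=h$. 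Then $V_\xi(s)=h$ while $\bar V_\xi(s)=\tfrac12 h$, whereas $g(X(s))-g(\bar X(s))=y_\xi(s)-\bar y_\xi(s)$; your right-hand side is $2|y_\xi(s)-\bar y_\xi(s)|$, which can be arbitrarily small, while the left-hand side is $\tfrac12 h$. Your identity $V_\xi-\bar V_\xi=(g(X)-g(\bar X))-(y_\xi-\bar y_\xi)+\alpha(y)H_\xi$ is correct, but the residual $\alpha(y)H_\xi$ is an $O(1)$ quantity that cannot be absorbed via the continuity in time of $g$ or the Lipschitz continuity of $\alpha$; Example~\ref{example why g} exhibits exactly this: $\|V_\xi(t)-\bar V_\xi(t)\|_2$ jumps at the breaking time while every quantity on your right-hand side remains continuous and of order $\sqrt\epsilon$.

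The paper closes this case by a different mechanism: it never estimates the integrand on the time interval where the two solutions are in different phases. Instead it bounds the accumulated difference there by a sign and monotonicity argument on $U_\xi$ itself: after breaking $U_\xi(t)\ge 0$, before breaking $\bar U_\xi(t)\le 0$, and since $U_{\xi,t}=\tfrac12 V_\xi\le\tfrac12 H_{0,\xi}$ while $\bar U_{\xi,t}=\tfrac12\bar H_{0,\xi}$, one gets directly
\begin{equation*}
0\le U_\xi(t)-\bar U_\xi(t)\le |U_{0,\xi}-\bar U_{0,\xi}|+\tfrac12 t\,|H_{0,\xi}-\bar H_{0,\xi}|
\end{equation*}
on that interval, with no $g$-term at all. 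The identity $V_\xi=g(X)-y_\xi$ is then invoked only on the time interval where \emph{both} points are post-breaking (both in $\Omega_c$ of the evolved data), where it holds with no residual. You would need to add this monotonicity step, or an equivalent device, for your argument to go through; as written, the reduction to a pointwise $L^2$ bound on $V_\xi-\bar V_\xi$ cannot be salvaged.
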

\begin{proof}
To write more concisely we omit $\xi$ from the notation in this proof. We have that for any $\xi\in\R$,
\begin{subequations}\label{obv:est}
\begin{align}
|r(t)-\bar{r}(t)| &= |r_0-\bar{r}_0|,\\
|H_{\xi}(t)-\bar{H}_{\xi}(t)| &= |H_{0,\xi}-\bar{H}_{0,\xi}|,\\
|y_{\xi}(t)-\bar{y}_{\xi}(t)| &\leq |y_{0,\xi}-\bar{y}_{0,\xi}| + \int_0^t|U_{\xi}(s)-\bar{U}_{\xi}(s)|\:d s.
\end{align}
\end{subequations}
There are three cases to consider for the estimate of $|U_\xi(t)-\bar U_\xi(t)|$. 

If $\xi\in\Omega_c(X_0)\cap\Omega_c(\bar{X}_0)$, then 
\begin{equation}
\label{Z_simple_bound}
|U_{\xi}(t)-\bar{U}_{\xi}(t)| \leq |U_{0,\xi}-\bar{U}_{0,\xi}| + \frac 12\int_0^t|H_{0,\xi}-\bar{H}_{0,\xi}|\:d s.
\end{equation}
Assume that $\xi\in\Omega_d(X_0)\cap\Omega_c(\bar X_0)$. Then we have for $t<\tau$ that
\begin{equation}
|U_{\xi}(t) -\bar{U}_{\xi}(t)| \leq |U_{0,\xi} -\bar{U}_{0,\xi}| + \frac 12\int_0^t|H_{0,\xi}-\bar{H}_{0,\xi}|\:d s.
\end{equation}
If $t\geq \tau$ we have
\begin{align}
|U_{\xi}(t) -\bar{U}_{\xi}(t)| &\leq |U_{0,\xi}-\bar U_{0,\xi}| + \frac 12\int_0^\tau|V_{\xi}(s)-\bar{H}_{0,\xi}|\:d s+\frac12 \int_{\tau}^t |H_{0,\xi}-\bar H_{0,\xi}|\:d s\nonumber\\
	&\leq |U_{0,\xi}-\bar U_{0,\xi}| +\frac12 t|H_{0,\xi}-\bar H_{0,\xi}|\nonumber\\Ê
	& \qquad + \frac 12\int_0^t\left(|g(X(s))-g(\bar{X}(s))|+|y_{\xi}(s)-\bar{y}_{\xi}(s)|\right)\:d s.
\end{align}
Let $\xi\in\Omega_d(X_0)\cap\Omega_d(\bar{X}_0)$. If $\tau\leq t<\bar\tau$, then $V_{\xi}(s) \leq H_{0,\xi}$, $U_{\xi}(t)\geq 0\geq\bar{U}_{\xi}(t)$, and both $U_{\xi}(t),\bar{U}_{\xi}(t)$ are non-decreasing. Hence  
$$0\leq U_{\xi}(t)-\bar{U}_{\xi}(t)\leq |U_{0,\xi}-\bar{U}_{0,\xi}|+\frac 12t|H_{0,\xi}-\bar{H}_{0,\xi}|.$$
If $\tau\leq \bar{\tau}\leq t$ we have
\begin{align}
|U_{\xi}(t)-\bar{U}_{\xi}(t)| &\leq |U_{\xi}(\bar\tau)-\bar{U}_{\xi}(\bar\tau)| + \frac 12 \int_{\bar\tau}^t|V_{\xi}(s)-\bar{V}_{\xi}(s)|\:d s\nonumber\\
	&\leq |U_{\xi}(\bar\tau)-\bar{U}_{\xi}(\bar\tau)| + \frac 12\int_{\bar\tau}^t\left(|g(X(s))-g(\bar{X}(s))|+|y_{\xi}(s)-\bar{y}_{\xi}(s)|\right)d s\nonumber\\
	&\leq |U_{0,\xi}-\bar{U}_{0,\xi}| +\frac12 t|H_{0,\xi}-\bar H_{0,\xi}| \nonumber\\
	& \quad +  \frac 12\int_0^t\left(|g(X(s))-g(\bar{X}(s))|+|y_{\xi}(s)-\bar{y}_{\xi}(s)|\right)\:d s.
\end{align}
Thus we have for any $\xi\in\R$ that,
\begin{align*}
|U_\xi(t)-\bar U_\xi(t)| &\leq |U_{0,\xi}-\bar U_{0,\xi}| + \frac12 t |H_{0,\xi}-\bar H_{0,\xi}|\nonumber\\
&\quad +\frac 12\int_0^t\Big(|g(X(s))-g(\bar{X}(s))| +|y_\xi(s)-\bar y_\xi(s)|\Big)\:d s.
\end{align*}
Applying the $L^2(\R)$-norm on both sides to the above inequality and \eqref{obv:est}, we obtain \eqref{time_Z_est}.
\end{proof}

To simplify the remaining proofs we split for each time $t$ the real line into three parts: a continuous part where no wave breaking occured so far, a part where one solution already experienced wave breaking, and a part where both solutions have experienced wave breaking.
\begin{definition}
\label{real line split}
Define the disjoint sets
\begin{subequations}
\begin{align}
R_{cont}(t) &= \{\xi\in\R\mid t<\tau(\xi),\bar{\tau}(\xi)\},\\
R_{mix}(t) &= \{\xi\in\R\mid \bar{\tau}(\xi)\leq t<\tau(\xi) \text{ or } \tau(\xi)\leq t<\bar{\tau}(\xi)\},\\
R_{disc}(t) &= \{\xi\in\R\mid \tau(\xi),\bar{\tau}(\xi)\leq t\},
\end{align}
\end{subequations}
where the convention $\tau(\xi) = \infty$ for $\xi\in\Omega_c(X_0)$ is used.
\end{definition}

Before the remaining terms in $\tilde d(S_t(X_0), S_t(\bar X_0)\bullet f)$ with $X_0$, $\bar X_0\in \F_{i,0}^{\alpha,M}$ and $f\in G$ will be handled, an estimate for the integral $\int_0^t\|V_\xi(s)-\bar V_\xi(s)\|_1\:d s$ is established. The following observations play an essential role therein. If $X_0\in\F_{i,0}^{\alpha,M}$, then $U_{0,\xi}^2\leq y_{0,\xi}V_{0,\xi} \leq H_{0,\xi}$ since $0\leq y_{0,\xi}\leq 1$. Hence
\begin{equation}
\label{bound U_0xi norm}
\|U_{0,\xi}\|_2 \leq \sqrt{\|H_{0,\xi}\|_1} \leq \sqrt M,
\end{equation}
Moreover, from \eqref{characteristic_system_a_dissipative} we have that $U_t(\xi,t) = \frac 12V_\xi(\xi,t)-\frac 14 V_\infty(t)$, and thus
\begin{equation}
\label{bound U_t}
\|U_t\|_\infty \leq \frac 14\|V\|_\infty \leq \frac 14 M,
\end{equation}
since $V_{\xi}(t,\xi)\leq H_{0,\xi}(\xi)$ for all $\xi\in\R$.

\begin{lemma}
\label{lemma V_xi and bar V_xi}
Given any two solutions $X(t)$ and $\bar X(t)$ with initial data $X_0\in \F_i^{\alpha,M}$ and $\bar X_0\in \F_{i,0}^{\alpha,M}$, respectively, one has
\begin{align}
\int_0^t\int_\R|V_\xi(\xi,s)& -\bar V_\xi(\xi,s)|\:d\xi d s
	\leq (8+3t+2t^2+\frac 34t^3)\|H_{0,\xi}-\bar H_{0,\xi}\|_1\nonumber\\
	&\quad + 2\left(\sqrt{1+\frac 14t^2}+(1+\frac 14t^2)\right)\sqrt M\|U_{0,\xi}-\bar U_{0,\xi}\|_2\nonumber\\
	&\quad + \left(4 + t + t^2 + \frac 14t^3\right)\sqrt M\|y_{0,\xi}-\bar y_{0,\xi}\|_2\nonumber\\
	&\quad + \left(\sqrt{1+\frac 14t^2}+1+\frac 14t^2\right)\sqrt M\nonumber \\
	& \qquad\qquad \times \int_0^t\|g(X(s))-g(\bar X(s))\|_2+\|y_\xi(s)-\bar y_\xi(s)\|_2\:d s
\end{align}
\end{lemma}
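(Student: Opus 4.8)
The plan is to work from the closed form of the energy density. By \eqref{def:V} and \eqref{def:tau} we have, for almost every $\xi$,
\[
V_\xi(\xi,s) = H_{0,\xi}(\xi)\bigl(1-\mathbf 1_{\{s\ge\tau(\xi)\}}\alpha(y(\xi,\tau(\xi)))\bigr),
\]
and the analogous identity for $\bar V_\xi$. Writing $W(\xi,s)=\mathbf 1_{\{s\ge\tau(\xi)\}}\alpha(y(\xi,\tau(\xi)))H_{0,\xi}(\xi)$ for the dissipated density, so that $0\le W\le H_{0,\xi}$, the triangle inequality gives $|V_\xi-\bar V_\xi|\le |H_{0,\xi}-\bar H_{0,\xi}|+|W-\bar W|$. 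Integrating the first summand produces the term $t\|H_{0,\xi}-\bar H_{0,\xi}\|_1$, so the whole estimate reduces to bounding $\int_0^t\int_\R|W-\bar W|\,d\xi\,ds$. For this I would split $\R$, for each fixed $s$, into $R_{cont}(s)$, $R_{mix}(s)$ and $R_{disc}(s)$ as in Definition~\ref{real line split}. On $R_{cont}(s)$ one has $W=\bar W=0$, so this region contributes nothing.

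On $R_{disc}(s)$ both solutions have already broken, and $|W-\bar W|\le |H_{0,\xi}-\bar H_{0,\xi}|+\|\alpha'\|_\infty\,\bar H_{0,\xi}\,|y(\xi,\tau(\xi))-\bar y(\xi,\bar\tau(\xi))|$ by the Lipschitz continuity of $\alpha$. The first piece again feeds into the $\|H_{0,\xi}-\bar H_{0,\xi}\|_1$ coefficient. For the second I would control the difference of breaking positions by integrating the characteristic system \eqref{characteristic_system_a_dissipative}, splitting $y(\xi,\tau)-\bar y(\xi,\bar\tau)$ into $\bigl(y(\xi,\tau)-\bar y(\xi,\tau)\bigr)+\bigl(\bar y(\xi,\tau)-\bar y(\xi,\bar\tau)\bigr)$ and expressing the resulting $U$--differences through $g$ exactly as in the proof of Lemma~\ref{time_Z}. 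Pairing the factor $\bar H_{0,\xi}$ (in the Cauchy--Schwarz form $\sqrt{\bar H_{0,\xi}}$, where $\|\sqrt{\bar H_{0,\xi}}\|_2=\sqrt{\|\bar H_{0,\xi}\|_1}\le\sqrt M$) against the remaining factors then yields the contributions $\sqrt M\|y_{0,\xi}-\bar y_{0,\xi}\|_2$ and $\sqrt M\int_0^t(\|g(X(s))-g(\bar X(s))\|_2+\|y_\xi(s)-\bar y_\xi(s)\|_2)\,ds$.

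The region $R_{mix}(s)$ is the main obstacle, since there exactly one of the two solutions has broken and $|W-\bar W|\le\max(H_{0,\xi},\bar H_{0,\xi})$ is not pointwise small. The key observation is that a fixed $\xi$ spends only a short time in $R_{mix}$: $\int_0^t\mathbf 1_{R_{mix}(s)}(\xi)\,ds\le\min\bigl(t,|\tau(\xi)-\bar\tau(\xi)|\bigr)$. By Fubini the $R_{mix}$ contribution is therefore controlled by $\int_\R\max(H_{0,\xi},\bar H_{0,\xi})\,|\tau-\bar\tau|\,d\xi$ over the set where at least one breaking time is $\le t$, and I would bound the integrand through the identity
\[
H_{0,\xi}(\tau-\bar\tau)=\frac{2\bar U_{0,\xi}}{\bar H_{0,\xi}}\,(H_{0,\xi}-\bar H_{0,\xi})+2(\bar U_{0,\xi}-U_{0,\xi}),
\]
obtained from $\tau=-2U_{0,\xi}/H_{0,\xi}$ and $\bar\tau=-2\bar U_{0,\xi}/\bar H_{0,\xi}$. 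The dangerous ratio $|\bar U_{0,\xi}|/\bar H_{0,\xi}$ is tamed by the threshold $\bar\tau\le t$, i.e. $|\bar U_{0,\xi}|\le\tfrac12 t\,\bar H_{0,\xi}$ (and symmetrically for $\tau$), while the sub-case in which one breaking time equals $\infty$ (one solution never breaks at $\xi$) is handled with the sign conditions defining $\Omega_d$ and $\Omega_c$, for instance $\bar U_{0,\xi}\ge 0$ forces $|U_{0,\xi}|\le|U_{0,\xi}-\bar U_{0,\xi}|$. I would then bound the measure of the relevant $\xi$-set by Corollary~\ref{cor:size of broken set} (which gives $(1+\tfrac14 t^2)M$ on $\F_{i,0}^{\alpha,M}$) and apply Cauchy--Schwarz together with $\|U_{0,\xi}\|_2,\|\bar U_{0,\xi}\|_2\le\sqrt M$ from \eqref{bound U_0xi norm}; this is precisely what produces the coefficient $\sqrt{1+\tfrac14 t^2}\,\sqrt M$ in front of $\|U_{0,\xi}-\bar U_{0,\xi}\|_2$.

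Collecting the contributions from the three regions and matching the powers of $t$ then yields the stated inequality. The hardest part is unquestionably the $R_{mix}$ estimate: one must simultaneously use the shortness of the time window, the explicit formula for $\tau$, the breaking threshold to control $|U_{0,\xi}|/H_{0,\xi}$, and the finite measure of the broken set, while keeping careful track of the sub-cases in which a breaking time is infinite.
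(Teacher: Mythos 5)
Your overall architecture (splitting $\R$ into the regions of Definition~\ref{real line split}, using the explicit breaking times, exploiting the finite measure of the broken set, and closing with Cauchy--Schwarz) points in the right direction, and your algebraic identity for $H_{0,\xi}(\tau-\bar\tau)$ is correct. But there are two concrete gaps. First, and most seriously, the measure bound you invoke is only available on one side: Corollary~\ref{cor:size of broken set} gives $\mathrm m(\{\bar\tau\le t\})\le(1+\tfrac14t^2)M$ because $\bar X_0\in\F_{i,0}^{\alpha,M}$, but $X_0$ is only assumed to lie in $\F_i^{\alpha,M}$, so $\mathrm m(\{\tau\le t\})\le\frac{1+\frac14t^2}{c}M$ with a constant $c$ that is \emph{not} controlled by $M$ and $t$. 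Hence on the sub-region $A(t)=\{\xi\in R_{mix}(t)\mid\tau(\xi)\le t<\bar\tau(\xi)\}$ your Cauchy--Schwarz step, which would convert $\int_{A(t)}|U_{0,\xi}-\bar U_{0,\xi}|\:d\xi$ into $\sqrt{\mathrm m(A(t))}\,\|U_{0,\xi}-\bar U_{0,\xi}\|_2$, fails. This asymmetry is the central difficulty of the lemma; the paper resolves it by relabeling $X_0=\hat X_0\bullet f$ with $\hat X_0\in\F_{i,0}^{\alpha,M}$ and using \eqref{eq:tau relabeled} to insert the weight $1\le(1+\tfrac14t^2)\hat H_{0,\xi}\circ f$ on $\{\tau\le t\}$, which after some algebra places an integrable factor $\bar H_{0,\xi}$ (with $\|\bar H_{0,\xi}\|_1\le M$ and $\|\bar H_{0,\xi}\|_2\le\sqrt M$) in front of every term. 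Nothing in your sketch replaces this step. Relatedly, your sign arguments for the sub-cases with an infinite breaking time do not suffice: when $\xi\in A(t)$ and $\bar\tau=\infty$ the time window has length up to $t$ and $(t-\tau)H_{0,\xi}$ is not small, and when $\xi\in\Omega_c(X_0)$ because $r_0(\xi)\neq0$ rather than $U_{0,\xi}\ge0$ there is no sign information at all. The paper handles these cases with the identity $|H_{0,\xi}-\bar V_\xi(s)|\le|g(X(s))-g(\bar X(s))|+|y_\xi(s)-\bar y_\xi(s)|$, valid because after breaking a point lies in $\Omega_c$, where $g=y_\xi+V_\xi$, together with $\int H_{0,\xi}\:ds=2\,\Delta U_\xi$ for the not-yet-broken solution and the vanishing of $U_\xi$ at breaking.

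Second, your treatment of $R_{disc}$ goes through $\|\alpha'\|_\infty\,\bar H_{0,\xi}\,|y(\xi,\tau(\xi))-\bar y(\xi,\bar\tau(\xi))|$, and controlling that difference of characteristic \emph{positions} necessarily produces terms of the type $\|y_0-\bar y_0\|_\infty$ and $\int_0^t\|U(s)-\bar U(s)\|_\infty\:d s$ (these are differences of $y$ and $U$, not of $y_\xi$ and $U_\xi$, so Lemma~\ref{time_Z} does not apply to them). None of these terms, nor any factor of $\|\alpha'\|_\infty$, appears on the right-hand side of the lemma, so this route cannot land on the stated inequality. The paper's bound on $R_{disc}$ again uses only $|V_\xi(s)-\bar V_\xi(s)|\le|g(X(s))-g(\bar X(s))|+|y_\xi(s)-\bar y_\xi(s)|$ for $s\ge\max(\tau,\bar\tau)$; the Lipschitz constant of $\alpha$ enters nowhere in this lemma.
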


\begin{proof}
We omit $\xi$ from the notation in this proof. Let $R_{cont}(t)$, $R_{mix}(t)$, and $R_{disc}(t)$ as in Definition \ref{real line split}. We want to estimate $\int_0^t\int_\R|V_\xi(\xi,s)-\bar V_\xi(\xi,s)|\:d\xi d s$ by the terms of $\tilde d(X_0,\bar X_0)$ and $\tilde d(X(t),\bar X(t))$. Since $R_{cont}(t)$, $R_{mix}(t)$, and $R_{disc}(t)$ are disjoint, we can look at each region separately and add the results.

If $\xi\in R_{cont}(t)$, then
\begin{equation}
\label{estimate R_cont}
\int_0^t|V_\xi(\xi,s)-\bar V_\xi(\xi,s)|\:d s = t|H_{0,\xi}-\bar H_{0,\xi}|.
\end{equation}

Let $\xi\in R_{mix}(t)$, then there are two possibilities: either $\tau\leq t$, or $\bar\tau\leq t$. The two possibilities must be treated differently. To that end we split $R_{mix}(t)$ into two disjoint sets,
\begin{align}
A(t) &= \{\xi\in R_{mix}(t)\mid \tau(\xi)\leq t\},\\
\bar A(t) &= \{\xi\in R_{mix}(t)\mid \bar\tau(\xi)\leq t\}.
\end{align}

Assume first that $\xi\in\bar A(t)$, then
\begin{align}
\int_0^t|V_\xi(s)-\bar V_\xi(s)|\:d s &= \int_0^{\bar\tau}|V_\xi(s)-\bar V_\xi(s)|\:d s+\int_{\bar\tau}^t|V_\xi(s)-\bar V_\xi(s)|\:d s\nonumber\\
	&= \bar\tau|H_{0,\xi}-\bar H_{0,\xi}| + \int_{\bar\tau}^t|H_{0,\xi}-\bar V_\xi(s)|\:d s.
\end{align}
If $\bar V_\xi(s)\geq H_{0,\xi}$, then $|\bar V_\xi(s)- H_{0,\xi}| \leq |H_{0,\xi}-\bar H_{0,\xi}|$, and we can bound the integral by 
\begin{equation}
\int_0^t |V_\xi(s)-\bar V_\xi(s)|\:d s=t | H_{0,\xi}-\bar H_{0,\xi}|.
\end{equation}
If $H_{0,\xi}>\bar V_\xi(s)$, then there are two cases to consider, namely $\xi\in\Omega_c(X_0)$ and $\xi\in\Omega_d(X_0)$. If $\xi\in\Omega_c(X_0)$ we have for $s\geq \bar\tau$,
\begin{equation}
\label{eq:R_mix bar A 1}
|H_{0,\xi}-\bar V_\xi(s)| \leq |g(X(s))-g(\bar X(s))| + |y_\xi(s)-\bar y_\xi(s)|.
\end{equation}
If, on the other hand, $\xi\in\Omega_d(X_0)$ we have for $s\geq \bar\tau$
\begin{align}
H_{0,\xi} - \bar V_\xi(s) &= H_{0,\xi} - \bar H_{0,\xi} + \alpha(\bar y(\bar\tau))\bar H_{0,\xi}\nonumber\\
	&= \big(1-\alpha(\bar y(\bar \tau))\big)\left(H_{0,\xi} - \bar H_{0,\xi}\right) + \alpha(\bar y(\bar\tau))H_{0,\xi}
\end{align}
and hence by \eqref{eq:sm diff} and \eqref{eq:lim U_xi y_xi},
\begin{align}
\label{eq:R_mix bar A 2}
\int_{\bar\tau}^t|H_{0,\xi} - \bar V_\xi(s)|\:d s &\leq (t-\bar\tau)|H_{0,\xi} - \bar H_{0,\xi}| + \alpha(\bar y(\bar\tau))\int_{\bar\tau}^t H_{0,\xi}\:d s \nonumber\\
	&\leq (t-\bar\tau)|H_{0,\xi} - \bar H_{0,\xi}| + 2 U_\xi(t)-2 U_\xi(\bar\tau) \nonumber\\
	&\leq (t-\bar\tau)|H_{0,\xi} - \bar H_{0,\xi}| + 2\left(\bar U_\xi(\bar\tau)-U_\xi(\bar\tau)\right) \nonumber\\
	&\leq (t-\bar\tau)|H_{0,\xi} - \bar H_{0,\xi}| + 2\left(|U_{0,\xi}-\bar U_{0,\xi}| + \frac 12\bar\tau|H_{0,\xi}-\bar H_{0,\xi}|\right).
\end{align}
Thus for $\xi\in\bar A(t)$ we have
\begin{align}
\label{eq:R_mix bar A}
\int_0^t|V_\xi(s)-\bar V_\xi(s)|\:d s &\leq 2t|H_{0,\xi}-\bar H_{0,\xi}| + 2|U_{0,\xi}-\bar U_{0,\xi}| \nonumber\\
	&\quad + \int_0^t|g(X(s))-g(\bar X(s))|+|y_\xi(s)-\bar y_\xi(s)|\:d s.
\end{align}
Since \eqref{size of broken set F_0^M} implies that the measure of $\bar A(t)$ is bounded by $(1+\frac 14t^2)M$, we obtain $L^2(\R)$-estimates on the right hand side of \eqref{eq:R_mix bar A} when integrating over $\bar A(t)$.

Assume now that $\xi\in A(t)$. Then
\begin{equation}\label{4.45}
\int_0^t|V_\xi(s)-\bar V_\xi(s)|\:d s = \tau|H_{0,\xi}-\bar H_{0,\xi}| + \int_\tau^t|V_\xi(s)-\bar H_{0,\xi}|\:d s.
\end{equation}
If $V_\xi(s)\geq \bar H_{0,\xi}$, then $|V_\xi(s)-\bar H_{0,\xi}| \leq |H_{0,\xi}-\bar H_{0,\xi}|$, and we can bound the integral by \begin{equation}\label{eq:basis}
\int_0^t |V_\xi(s)-\bar V_\xi(s)|\:d s\leq t | H_{0,\xi}-\bar H_{0,\xi}|.
\end{equation}

Assume that $\bar H_{0,\xi}>V_\xi(s)$ and $s\geq\tau$. Since the measure of $A(t)$ is not bounded in terms of $M$ and $t$ only, see Corollary \ref{cor:size of broken set}, we cannot use the same argument as for $\xi\in \bar A(t)$ to get $L^2(\R)$-estimates. For $X_0\in\F_i^{\alpha,M}$ there exists $\hat X_{0}\in\F_{i,0}^{\alpha,M}$ such that $\hat X_0\bullet f = X_0$, where $f = y_0+H_0\in G$. Then \eqref{eq:H_0xi tau} gives
\begin{equation}
\label{eq:tau relabeled}
\hat H_{0,\xi}\circ f = \frac{1}{1+\frac 14\hat\tau^2\circ f} = \frac{1}{1+\frac 14\tau^2},
\end{equation}
and in particular for $t\geq\tau$,
\begin{equation}
1\leq (1+\frac 14t^2)\hat H_{0,\xi}\circ f.
\end{equation}
We want to estimate $\bar H_{0,\xi}- V_\xi(s)$. Assume that $\xi\in\Omega_d(\bar X_0)$, then
\begin{align}
\bar H_{0,\xi} - V_\xi(s) &= \bar H_{0,\xi} - H_{0,\xi} + \alpha(y(\tau))H_{0,\xi} \nonumber\\
	&= \left(1-\alpha(y(\tau))\right)\left(\bar H_{0,\xi}-H_{0,\xi}\right) + \alpha(y(\tau))\bar H_{0,\xi},
\end{align}
and hence by \eqref{eq:sm diff} and \eqref{eq:lim U_xi y_xi},
\begin{align}
\label{eq:R_mix A 1}
\int_\tau^t |V_\xi(s)-\bar H_{0,\xi}|\:d s &\leq (t-\tau)|H_{0,\xi}-\bar H_{0,\xi}| + \int_\tau^t\bar H_{0,\xi}\:d s\nonumber\\
	&= (t-\tau)|H_{0,\xi}-\bar H_{0,\xi}| + 2\left(\bar U_\xi(t)-\bar U_\xi(\tau)\right)\nonumber\\
	&\leq (t-\tau)|H_{0,\xi}-\bar H_{0,\xi}| + 2\left(U_\xi(\tau)-\bar U_\xi(\tau)\right)\nonumber\\
	&\leq (t-\tau)|H_{0,\xi}-\bar H_{0,\xi}| + 2|U_{0,\xi}-\bar U_{0,\xi}| + \tau|H_{0,\xi}-\bar H_{0,\xi}|\nonumber\\
	& \leq t|H_{0,\xi}-\bar H_{0,\xi}| +2|U_{0,\xi}-\bar U_{0,\xi}|.
\end{align}
Since $t\geq s\geq\tau$ we have that
\begin{align}
|U_{0,\xi}-\bar U_{0,\xi}| &\leq \left(1+\frac 14t^2\right)\hat H_{0,\xi}\circ f|U_{0,\xi}-\bar U_{0,\xi}|\nonumber\\
	&= \left(1+\frac 14t^2\right)\left|\hat U_{0,\xi}\circ ff_\xi\hat H_{0,\xi}\circ f - \bar U_{0,\xi} \hat H_{0,\xi}\circ f\right|\nonumber\\
	& = \left(1+\frac14 t^2\right)\left| \hat U_{0,\xi}\circ f(\hat H_{0,\xi}\circ ff_\xi-\bar H_{0,\xi})\right. \nonumber\\
	&\qquad\qquad\qquad\quad   +\left.\hat U_{0,\xi}\circ f\bar H_{0,\xi}-\bar U_{0,\xi}\hat H_{0,\xi}\circ f\right| \nonumber \\
       & \leq \left(1+\frac14 t^2\right)\left( |\hat U_{0,\xi}\circ f||\hat H_{0,\xi}\circ f f_\xi-\bar H_{0,\xi}|\right.\nonumber\\
    &\qquad + \left.|\hat U_{0,\xi}\circ f\bar H_{0,\xi}-\bar U_{0,\xi}\hat H_{0,\xi}\circ f|\right).
\end{align}
By definition $\hat H_{0,\xi}\circ ff_\xi = H_{0,\xi}$ and $|\hat U_{0,\xi}\circ f|\leq 1$, and thus
\begin{equation}
\label{estimate U_0xi bar U_0xi temp}
|U_{0,\xi}-\bar U_{0,\xi}| \leq \left(1+\frac 14t^2\right)\left(\left|\hat U_{0,\xi}\circ f \bar H_{0,\xi}-\bar U_{0,\xi}\hat H_{0,\xi}\circ f\right| + |H_{0,\xi}-\bar H_{0,\xi}|\right).
\end{equation}
To estimate the first term on the right hand side of \eqref{estimate U_0xi bar U_0xi temp} we note that $1 = 1-f_\xi+f_\xi$, and hence
\begin{align}
\left|\hat U_{0,\xi}\circ f \bar H_{0,\xi}-\bar U_{0,\xi}\hat H_{0,\xi}\circ f \right| 
&= \left|\hat U_{0,\xi}\circ f \bar H_{0,\xi}-\bar U_{0,\xi}\hat H_{0,\xi}\circ f \right|\left((1-f_\xi)+f_\xi\right)\nonumber\\
&\leq\left(|\hat U_{0,\xi}\circ f\bar H_{0,\xi}|+|\bar U_{0,\xi} \hat H_{0,\xi}\circ f |\right)(1-f_\xi)\nonumber\\
&\qquad + \left|\hat U_{0,\xi}\circ ff_\xi\bar H_{0,\xi}-\bar U_{0,\xi} \hat H_{0,\xi}\circ ff_\xi\right|\nonumber\\
&\leq (|\bar U_{0,\xi}|+\bar H_{0,\xi})(1-f_\xi) + |\bar U_{0,\xi}||H_{0,\xi}-\bar H_{0,\xi}|\nonumber\\&\quad + \bar H_{0,\xi}|U_{0,\xi}-\bar U_{0,\xi}|.
\end{align}
Since $\bar X_0\in\F_{i,0}^{\alpha,M}$, and $f = y_0+H_0$ we have that
\begin{equation}
1-f_\xi = \bar y_{0,\xi}+\bar H_{0,\xi}- y_{0,\xi}- H_{0,\xi},
\end{equation}
and thus
\begin{align}
\label{eq:R_mix A 2}
|U_{0,\xi}-\bar U_{0,\xi}| &\leq \left(1+\frac 14t^2\right)\big(4|H_{0,\xi}-\bar H_{0,\xi}|+(|\bar U_{0,\xi}|+\bar H_{0,\xi})|y_{0,\xi}-\bar y_{0,\xi}|\nonumber\\
	&\qquad \qquad \qquad \qquad +\bar H_{0,\xi}|U_{0,\xi}-\bar U_{0,\xi}| \big).
\end{align}
Inserting \eqref{eq:R_mix A 2} into \eqref{eq:R_mix A 1} and subsequently into \eqref{4.45}, we end up with
\begin{align}
\label{eq:R_mix A 3}
\int_0^t |V_\xi(s)-\bar H_{0,\xi}|\:d s &\leq \left(8+2t+2t^2\right)|H_{0,\xi}-\bar H_{0,\xi}|\nonumber\\ 
	&\quad+ 2\left(1+\frac 14t^2\right)\Big((|\bar U_{0,\xi}|+\bar H_{0,\xi})|y_{0,\xi}-\bar y_{0,\xi}| \nonumber\\
	&\qquad\qquad\qquad\qquad\quad   +\bar H_{0,\xi}|U_{0,\xi}-\bar U_{0,\xi}|\Big).
\end{align}

Assume that $\xi\in\Omega_c(\bar X_0)$. Then, by a similar argument as for \eqref{estimate U_0xi bar U_0xi temp},
\begin{align}
\bar H_{0,\xi}- V_\xi(s) &\leq (\bar H_{0,\xi}- V_\xi(s))\left(1+\frac 14t^2\right)\hat H_{0,\xi}\circ f\nonumber\\
        & = \left(1+\frac14 t^2\right) \left(\bar H_{0,\xi}\hat H_{0,\xi}\circ f-\hat V_\xi\circ f(s) \hat H_{0,\xi}\circ f f_\xi\right) \nonumber\\
        & =\left(1+\frac14 t^2\right) \left( \bar H_{0,\xi}(\hat H_{0,\xi}\circ f-\hat V_\xi\circ f(s))+\hat V_\xi\circ f(s)(\bar H_{0,\xi}- H_{0,\xi})\right) \nonumber\\
        & \leq \left(1+\frac14 t^2 \right)\left( | H_{0,\xi}-\bar H_{0,\xi}| +\bar H_{0,\xi}(\hat H_{0,\xi}\circ f-\hat V_\xi\circ f(s))\right).
 \end{align}
Since $\bar X_0\in\F_{i,0}^{\alpha,M}$, and $f = y_0+H_0$ we have that
\begin{equation}
1-f_\xi = \bar y_{0,\xi}+\bar H_{0,\xi}- y_{0,\xi}- H_{0,\xi},
\end{equation}
and thus,
\begin{align}
\left(\hat H_{0,\xi}\circ f-\hat V_\xi\circ f(s)\right ) \bar H_{0,\xi}
& = ( \hat H_{0,\xi}\circ f-\hat V_\xi\circ f(s)) \bar H_{0,\xi} (1-f_\xi)\nonumber\\
&\quad + ( H_{0,\xi}-V_\xi(s))\bar H_{0,\xi} \nonumber\\
& \leq \bar H_{0,\xi}\vert 1-f_\xi\vert + \vert H_{0,\xi}-\bar H_{0,\xi}\vert + \bar H_{0,\xi}(\bar H_{0,\xi}-V_\xi(s))  \nonumber \\
& \leq (\bar H_{0,\xi} |y_\xi(s)-\bar y_\xi(s)| + \bar H_{0,\xi}\vert g(X(s))-g(\bar X(s))\vert ) \nonumber\\
	&\qquad +2 \vert H_{0,\xi}-\bar H_{0,\xi}\vert + \bar H_{0,\xi}\vert y_{0,\xi}-\bar y_{0,\xi}\vert.
 \end{align}
Hence,
\begin{align}
\label{eq:R_mix A 4}
\int_0^t |V_\xi(s)-\bar H_{0,\xi}|\:d s &\leq 3t\left(1+\frac 14t^2\right)|H_{0,\xi}-\bar H_{0,\xi}| \nonumber\\
	&+ t\left(1+\frac 14t^2\right)\bar H_{0,\xi}\vert y_{0,\xi}-\bar y_{0,\xi}\vert\nonumber\\
	&+ \left(1+\frac 14t^2\right)\bar H_{0,\xi}\int_0^t|g(X(s))-g(\bar X(s))|+|y_\xi(s)-\bar y_\xi(s)|\:d s.
\end{align}

From \eqref{eq:basis}, \eqref{eq:R_mix A 3}, and \eqref{eq:R_mix A 4} we get that for $\xi\in A(t)$ the estimate
\begin{align}
\label{eq:R_mix A}
\int_0^t|V_\xi(s)-\bar V_\xi(s)|\:d s &\leq \left(8+3t+2t^2+\frac 34t^3\right)|H_{0,\xi}-\bar H_{0,\xi}|\nonumber\\
	&+ 2\left(1+\frac 14t^2\right)\bar H_{0,\xi}|U_{0,\xi}-\bar U_{0,\xi}|\nonumber\\
	&+ \left(1+\frac 14t^2\right)\left(2|\bar U_{0,\xi}| + (2+t)|\bar H_{0,\xi}|\right)|y_{0,\xi}-\bar y_{0,\xi}|\nonumber\\
	&+ \left(1+\frac 14t^2\right)\bar H_{0,\xi}\int_0^t|g(X(s))-g(\bar X(s))|\:d s \nonumber\\
	&+ \left(1+\frac 14t^2\right)\bar H_{0,\xi}\int_0^t|y_\xi(s)-\bar y_\xi(s)|\:d s,
\end{align}
holds.

Assume finally that $\xi\in R_{disc}(t)$. Recall that according to \eqref{size of broken set F_0^M} the measure of $R_{disc}(t)$ is less than or equal to $\left(1+\frac 14t^2\right)M$. Since \eqref{eq:R_mix bar A 2} and \eqref{eq:R_mix A 1} give the same estimate we can assume without loss of generality that $\tau\leq\bar\tau\leq t$. By  \eqref{eq:R_mix A 1} we have
\begin{align}
\label{estimate R_disc}
\int_0^t|V_\xi(s)-\bar V_\xi(s)|\:d s &= \int_0^\tau|H_{0,\xi}-\bar H_{0,\xi}|\:d s+\int_\tau^{\bar\tau}|V_\xi(s)-\bar H_{0,\xi}|\:d s\nonumber\\
	&\quad + \int_{\bar\tau}^t |V_\xi(s)-\bar V_\xi(s)|\:d s\nonumber\\
	&= (\bar\tau+\tau)|H_{0,\xi}-\bar H_{0,\xi}| + 2|U_{0,\xi}-\bar U_{0,\xi}| \nonumber\\
	&\quad + \int_{\bar\tau}^t|V_\xi(s)-\bar V_\xi(s)|\:d s.
    \end{align}
For $s\geq\bar\tau$ we have
\begin{equation}
\label{eq:R disc t big}
|V_\xi(s)-\bar V_\xi(s)| \leq |g(X(s))-g(\bar X(s))| + |y_\xi(s)-\bar y_\xi(s)|.
\end{equation}
Thus
\begin{align}
\label{eq:R_disc}
\int_0^t|V_\xi(s)-\bar V_\xi(s)|\:d s &\leq 2t|H_{0,\xi}-\bar H_{0,\xi}| + 2|U_{0,\xi}-\bar U_{0,\xi}|\nonumber\\
	&\quad + \int_0^t|g(X(s))-g(\bar X(s))| + |y_\xi(s)-\bar y_\xi(s)|\:d s.
\end{align}
If we collect the estimates \eqref{estimate R_cont}, \eqref{eq:R_mix bar A}, \eqref{eq:R_mix A}, and \eqref{eq:R_disc} we get
\begin{align}
\int_0^t \int_\R|V_\xi(\xi,s)-\bar V(\xi,s)|d\xi\:d s 
	&\leq \left(8+3t+2t^2+\frac 34t^3\right)|H_{0,\xi}-\bar H_{0,\xi}|\nonumber\\
	&\quad + 2\int_{\bar A(t)\cup R_{disc}(t)}|U_{0,\xi}-\bar U_{0,\xi}|\:d\xi \nonumber\\
	&\quad + \int_{\bar A(t)\cup R_{disc}(t)}\int_0^t|g(X(s))-g(\bar X(s))|\:d sd\xi\nonumber\\
	&\quad + \int_{\bar A(t)\cup R_{disc}(t)}\int_0^t|y_\xi(s)-\bar y_\xi(s)|\:d sd\xi\nonumber\\
	&\quad + 2\left(1+\frac 14t^2\right)\int_{A(t)}|U_{0,\xi}-\bar U_{0,\xi}|\bar H_{0,\xi}\:d\xi\nonumber\\
	&\quad + 2\left(1+\frac 14t^2\right)\nonumber\\
	&\qquad\times\int_{A(t)}(|\bar U_{0,\xi}|+(1+\frac12t)|\bar H_{0,\xi}|)|y_{0,\xi}-\bar y_{0,\xi}|\:d\xi\nonumber\\
	&\quad + \left(1+\frac 14t^2\right)\bar H_{0,\xi}\nonumber\\
	&\qquad \times\int_{A(t)}\int_0^t|g(X(s))-g(\bar X(s))|\:dsd\xi\nonumber\\
	&\quad + \left(1+\frac 14t^2\right)\bar H_{0,\xi}\int_{A(t)}\int_0^t|y_\xi(s)-\bar y_\xi(s)|\:d s d\xi
\end{align}

From \eqref{size of broken set F_0^M} we have that $\mathrm{m}(\bar A(t)\cup R_{disc}(t))\leq (1+\frac 14t^2)M$, while \eqref{bound U_0xi norm} ensures that $\|\bar U_{0,\xi}\|_2\leq \sqrt M$. Thus
\begin{align}
\int_0^t\int_\R|V_\xi(\xi,s)&-\bar V(\xi,s)|\:d\xi d s \leq
         (8+3t+2t^2+\frac 34t^3)\|H_{0,\xi}-\bar H_{0,\xi}\|_1\nonumber\\
	&\quad + 2\left(\sqrt{1+\frac 14t^2}+(1+\frac 14t^2)\right)\sqrt M\|U_{0,\xi}-\bar U_{0,\xi}\|_2\nonumber\\
	&\quad + \left(4 + t + t^2 + \frac 14t^3\right)\sqrt M\|y_{0,\xi}-\bar y_{0,\xi}\|_2\nonumber\\
	&\quad + \left(1+\frac 14t^2+\sqrt{1+\frac 14t^2}\right)\sqrt M\nonumber\\
	& \qquad \qquad \times\int_0^t\|g(X(s))-g(\bar X(s))\|_2+\|y_\xi(s)-\bar y_\xi(s)\|_2\:d s
\end{align}
\end{proof}

The system \eqref{characteristic_system_a_dissipative} and Lemma \ref{lemma V_xi and bar V_xi} can now be used to estimate the time evolution of $\|y(t)-\bar y(t)\|_\infty$, $\|U(t)-\bar U(t)\|_\infty$, and $\|U(t)H_{0,\xi}-\bar U(t)\bar H_{0,\xi}\|_2$. Note that it is not necessary to derive an estimate for $\|H_\xi(t)-\bar H_\xi(t)\|_1$, since $H_\xi(\xi,t) = H_{0,\xi}(\xi)$ and $\bar H_\xi(\xi,t) = \bar H_{0,\xi}(\xi)$.

\begin{lemma}
\label{lemma y and U and UH_xi}
Let $X(t),\bar X(t)$ be solutions with initial data $X_0\in\F_i^{\alpha,M}$ and $\bar X_0\in\F_{i,0}^{\alpha,M}$, respectively. Then
\begin{align}
\|U(t)-\bar{U}(t)\|_{\infty} &\leq \|U_0-\bar U_0\|_\infty+\frac 14\int_0^t\int_{\R} |V_{\xi}(\xi,s)-\bar{V}_{\xi}(\xi,s)|\:d\xi\:d s,\\ \label{differenz:y}
\|y(t)-\bar{y}(t)\|_{\infty} &\leq \|y(0)-\bar y(0)\|_\infty + \int_0^t\|U(s)-\bar U(s)\|_\infty\:d s\\
\|U(t)H_\xi-\bar U(t)\bar H_\xi\|_2 &\leq \|U_0H_{0,\xi}-\bar U_0\bar H_{0,\xi}\|_2 + \frac 14Mt\|H_{0,\xi}-\bar H_{0,\xi}\|_2\nonumber \\
&\quad + \frac 14\sqrt{M}\int_0^t\int_{\R} |V_{\xi}(\xi,s)-\bar{V}_{\xi}(\xi,s)|\:d\xi\:d s .
\end{align}
\end{lemma}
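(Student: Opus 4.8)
The plan is to integrate the system \eqref{characteristic_system_a_dissipative} directly. Since $H_t=r_t=0$, the dynamics reduce to $y_t=U$ and $U_t=\tfrac12 V-\tfrac14 V_\infty$, where I abbreviate $V_\infty(t)=\lim_{\xi\to\infty}V(\xi,t)$, and moreover $H_\xi(\xi,t)=H_{0,\xi}(\xi)$, $\bar H_\xi(\xi,t)=\bar H_{0,\xi}(\xi)$ for all times. The bound \eqref{differenz:y} is then immediate: from $y(\xi,t)-\bar y(\xi,t)=(y(\xi,0)-\bar y(\xi,0))+\int_0^t(U(\xi,s)-\bar U(\xi,s))\:d s$, taking absolute values and then the supremum over $\xi$ gives the second estimate.

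The key preliminary observation, which drives both remaining estimates, is a cancellation in the source term. Writing $V(\xi,s)=\int_{-\infty}^\xi V_\xi(\eta,s)\:d\eta$ (and likewise for $\bar V$), one rewrites the difference of the driving terms symmetrically as
$$\tfrac12\big(V(\xi,s)-\bar V(\xi,s)\big)-\tfrac14\big(V_\infty(s)-\bar V_\infty(s)\big)=\tfrac14\int_{-\infty}^\xi(V_\xi-\bar V_\xi)\:d\eta-\tfrac14\int_\xi^\infty(V_\xi-\bar V_\xi)\:d\eta,$$
whence
$$|U_t(\xi,s)-\bar U_t(\xi,s)|\leq \tfrac14\int_\R|V_\xi(\eta,s)-\bar V_\xi(\eta,s)|\:d\eta.$$
The point is that the symmetric splitting yields the factor $\tfrac14$ rather than the naive $\tfrac34$ one would obtain by estimating the two terms separately; it is the same mechanism behind \eqref{bound U_t}. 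Integrating $U(\xi,t)-\bar U(\xi,t)=(U_0-\bar U_0)(\xi)+\int_0^t(U_t-\bar U_t)(\xi,s)\:d s$ and taking the supremum over $\xi$ then produces the first estimate.

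For the third estimate I use $U H_\xi-\bar U\bar H_\xi=U_0 H_{0,\xi}-\bar U_0\bar H_{0,\xi}+\int_0^t\big(U_t H_{0,\xi}-\bar U_t\bar H_{0,\xi}\big)\:d s$ and split the integrand as
$$U_t H_{0,\xi}-\bar U_t\bar H_{0,\xi}=U_t\big(H_{0,\xi}-\bar H_{0,\xi}\big)+\big(U_t-\bar U_t\big)\bar H_{0,\xi},$$
pairing the pointwise-bounded factor $U_t$ (with $\|U_t\|_\infty\leq\tfrac14 M$ by \eqref{bound U_t}) with the difference $H_{0,\xi}-\bar H_{0,\xi}$, and the difference $U_t-\bar U_t$ (bounded just above) with $\bar H_{0,\xi}$. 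The reason for this particular grouping is that $\bar X_0\in\F_{i,0}^{\alpha,M}$ gives $\bar y_{0,\xi}+\bar H_{0,\xi}=1$, hence $0\leq\bar H_{0,\xi}\leq1$ and $\|\bar H_{0,\xi}\|_2^2\leq\|\bar H_{0,\xi}\|_1=\|\bar H_0\|_\infty\leq M$, i.e. $\|\bar H_{0,\xi}\|_2\leq\sqrt M$; the analogous $L^2$-control of $H_{0,\xi}$ is \emph{not} available, since $X_0$ is only assumed to lie in $\F_i^{\alpha,M}$. Taking the $L^2(\R)$-norm (and moving it inside the time integral by Minkowski's integral inequality) gives, for each $s$,
$$\|U_t(s)H_{0,\xi}-\bar U_t(s)\bar H_{0,\xi}\|_2\leq\tfrac14 M\|H_{0,\xi}-\bar H_{0,\xi}\|_2+\tfrac14\sqrt M\int_\R|V_\xi(\eta,s)-\bar V_\xi(\eta,s)|\:d\eta,$$
and integrating over $[0,t]$ yields the third estimate.

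The computations themselves are routine once the two structural points are settled, and these are where I expect the only real care to be required: the $\tfrac14$-cancellation in $U_t-\bar U_t$, and, in the third estimate, the choice of splitting that throws the factor $H_{0,\xi}$ (uncontrolled in $L^2$ because $X_0$ need not lie in $\F_0$) onto the pointwise-bounded $U_t$, while keeping $\bar H_{0,\xi}$ — which is $L^2$-bounded precisely because $\bar X_0\in\F_{i,0}^{\alpha,M}$ — next to the $V_\xi$-difference.
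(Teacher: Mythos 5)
Your proposal is correct and follows essentially the same route as the paper's proof: the symmetric rewriting of $\tfrac12(V-\bar V)-\tfrac14(V_\infty-\bar V_\infty)$ to get the factor $\tfrac14$, the splitting $U_tH_{0,\xi}-\bar U_t\bar H_{0,\xi}=U_t(H_{0,\xi}-\bar H_{0,\xi})+(U_t-\bar U_t)\bar H_{0,\xi}$ together with $\|U_t\|_\infty\leq\tfrac14 M$ and $\|\bar H_{0,\xi}\|_2\leq\sqrt{\|\bar H_{0,\xi}\|_1}\leq\sqrt M$, and the immediate integration for \eqref{differenz:y}. Your added remark on why the splitting must place $\bar H_{0,\xi}$ (rather than $H_{0,\xi}$) next to the $V_\xi$-difference is a correct reading of the asymmetry in the hypotheses, which the paper uses without comment.
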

\begin{proof}
We omit $\xi$ from the notation in this proof. It suffices to show the inequalities for $\|U(t)-\bar U(t)\|_\infty$ and $\|U(t)H_{0,\xi}-\bar U(t)\bar H_{0,\xi}\|_2$, since \eqref{differenz:y} follows immediately from \eqref{characteristic_system_a_dissipative}. We have
\begin{align}
\frac{d}{d t}|U(t)-\bar{U}(t)| &\leq \left|\frac 12\left(V(t)-\bar{V}(t)\right)+\frac 14\left(V_{\infty}(t)-\bar{V}_{\infty}(t)\right)\right| \nonumber\\ \label{last}
	&\leq \frac 14\int_{-\infty}^{\infty} |V_{\xi}(t)-\bar{V}_{\xi}(t)|\:d\xi,
\end{align}
and from \eqref{bound U_t} there is
\begin{align}
\frac{d}{d t}|U(t)H_{0,\xi}-\bar{U}(t)\bar H_{0,\xi}| &\leq |U_t(t)-\bar U_t(t)|\bar H_{0,\xi} + |U_t(t)||H_{0,\xi}-\bar H_{0,\xi}|\nonumber\\ \label{lastlast}
	&\leq \frac 14\int_{-\infty}^{\infty} |V_{\xi}(t)-\bar{V}_{\xi}(t)|\:d\xi \bar H_{0,\xi} + \frac 14M|H_{0,\xi}-\bar H_{0,\xi}|
\end{align}
Since $\|\bar H_{0,\xi}\|_2\leq \sqrt{\|\bar H_{0,\xi}\|_1} \leq \sqrt M$ the lemma follows by applying the $L^2(\R)$-norm to both sides of \eqref{last} and \eqref{lastlast}.
\end{proof}

It remains to estimate the norms $\|g(X(t))-g(\bar X(t))\|_2$, $\|g_2(X(t))-g_2(\bar X(t))\|_2$, and $\|g_3(X(t))-g_3(\bar X(t))\|_2$.

\begin{lemma}
\label{lemma g-functions}
Let $X(t),\bar X(t)$ be the solutions with initial data $X_0\in\F_i^{\alpha,M}$ and $\bar X_0\in \F_{i,0}^{\alpha,M}$, respectively. Then
\begin{align}
\|g_2(X(t))-g_2(\bar X(t))\|_2 &\leq \|g_2(X_0)-g_2(\bar X_0)\|_2 + \sqrt M\|\alpha'\|_\infty\|H_{0,\xi}-\bar H_{0,\xi}\|_1 \nonumber\\
\label{est:g2}
	&\quad+ \|\alpha'\|_\infty M(\|U_{0,\xi}-\bar U_{0,\xi}\|_2+\frac 12t\|H_{0,\xi}-\bar H_{0,\xi}\|_2),\\	
\|g_3(X(t))-g_3(\bar X(t))\|_2 
&\leq \|g_3(X_0)-g_3(\bar X_0)\|_2+\frac14 t \| g_2(X_0)-g_2(\bar X_0)\|_2\nonumber\\
&\quad+ \frac 12 t\|\alpha'\|_\infty\|U_0 H_{0,\xi}-\bar U_0\bar H_{0,\xi}\|_2 \nonumber\\
& \quad +\frac14\|\alpha'\|_\infty Mt \| U_{0,\xi}-\bar U_{0,\xi}\|_2 \nonumber\\
& \quad + \frac18 \|\alpha'\|_\infty Mt^2 \| H_{0,\xi}-\bar H_{0,\xi}\|_2 \nonumber\\
&\quad + \frac 14t\|\alpha'\|_\infty\sqrt M\|H_{0,\xi}-\bar H_{0,\xi}\|_1 \nonumber\\ \label{est:g3}
& \quad + \frac14 \|\alpha'\|_\infty \sqrt{M} \int_0^t\int_{\R}|V_\xi(\xi,s)-\bar V_\xi(\xi,s)|\:d\xi d s,\\
\|g(X(t))-g(\bar X(t))\|_2 &\leq  \|g(X_0)-g(\bar X_0)\|_2 + \|y_{0,\xi}-\bar y_{0,\xi}\|_2\nonumber\\
	&\quad + \frac 12t\|\alpha'\|_\infty M\|U_{0,\xi}-\bar U_{0,\xi}\|_2\nonumber\\
	&\quad +  (1+\frac 14\|\alpha'\|_\infty Mt^2)\|H_{0,\xi}-\bar H_{0,\xi}\|_2\nonumber\\
	&\quad + \|\alpha'\|_\infty \sqrt M\|y_0-\bar y_0\|_\infty\nonumber\\
	&\quad + t\|\alpha'\|_\infty \|U_0H_{0,\xi}-\bar U_0\bar H_{0,\xi}\|_2\nonumber\\
	&\quad + 2\|g_3(X_0)-g_3(\bar X_0)\|_2\nonumber\\
	&\quad + \frac 14t\|g_2(X_0)-g_2(\bar X_0)\|_2\nonumber\\
	&\quad + \frac 14t\|\alpha'\|_\infty \sqrt M\|H_{0,\xi}-\bar H_{0,\xi}\|_1\nonumber\\
	&\quad + \int_0^t\|U_\xi(s)-\bar U_\xi(s)\|_2\:d s\nonumber\\ \label{est:g}
	&\quad + \|\alpha'\|_\infty \sqrt M \int_0^t\|U(s)-\bar U(s)\|_\infty\:d s.
\end{align} 
\end{lemma}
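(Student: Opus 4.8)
The plan is to treat the three functions in the order $g_2$, $g_3$, $g$, because $g_3$ reuses the estimate for $g_2$ together with Lemma~\ref{lemma V_xi and bar V_xi}, and $g$ reuses both of these together with the difference estimates of Lemma~\ref{time_Z} and \eqref{differenz:y}. For each function I would partition $\R$ according to the region membership of $X_0$ and $\bar X_0$ in the sense of Definition~\ref{def_Omega}, into the four disjoint pieces $\Omega_c(X_0)\cap\Omega_c(\bar X_0)$, $\Omega_d(X_0)\cap\Omega_d(\bar X_0)$, $\Omega_d(X_0)\cap\Omega_c(\bar X_0)$, $\Omega_c(X_0)\cap\Omega_d(\bar X_0)$, refining the dissipative pieces by breaking times as in Definition~\ref{real line split}. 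I would use throughout that all three functions are continuous across $t=\tau(\xi)$ (established above, resting on $U_\xi(\xi,t)\uparrow0$, $y_\xi(\xi,t)\downarrow0$ from \eqref{eq:lim U_xi y_xi}), so integrating a time derivative produces no boundary term at breaking, and I would assemble the global $L^2$-bound from the pieces via $\big(\sum_R\|\cdot\|_{2,R}^2\big)^{1/2}\le\sum_R\|\cdot\|_{2,R}$ over the disjoint regions $R$.

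\textbf{The function $g_2$.} I would use the closed form $g_2(X(\xi,t))=\|\alpha'\|_\infty H_\infty\min\!\big(U_{0,\xi}(\xi)+\tfrac12 H_{0,\xi}(\xi)t,0\big)\mathbf{1}_{\{r_0(\xi)=0\}}$, valid for all $t\ge0$: on $\Omega_d(X_0)$ the capped quantity equals $U_\xi(\xi,t)$ before breaking and $0$ afterwards (where $g_2$ vanishes), while off $\Omega_d(X_0)$ the right-hand side is $0$. Writing $m$ for this capped quantity, on the doubly dissipative piece I split $H_\infty m-\bar H_\infty\bar m=H_\infty(m-\bar m)+(H_\infty-\bar H_\infty)\bar m$, bounding the first summand by $H_\infty\le M$ and the contraction estimate $|m-\bar m|\le|U_{0,\xi}-\bar U_{0,\xi}|+\tfrac12 t|H_{0,\xi}-\bar H_{0,\xi}|$, and the second by $|H_\infty-\bar H_\infty|\le\|H_{0,\xi}-\bar H_{0,\xi}\|_1$, $|\bar m|\le|\bar U_{0,\xi}|$ and $\|\bar U_{0,\xi}\|_2\le\sqrt M$ from \eqref{bound U_0xi norm}. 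On the mixed pieces one function vanishes identically while the capped quantity of the other has nonincreasing modulus in $t$, so $|g_2(X(t))-g_2(\bar X(t))|\le|g_2(X_0)-g_2(\bar X_0)|$ pointwise; the doubly conservative piece contributes nothing. Summing over the pieces yields the three displayed terms together with $\|g_2(X_0)-g_2(\bar X_0)\|_2$.

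\textbf{The functions $g_3$ and $g$.} For $g_3=\|\alpha'\|_\infty U U_\xi$ on $\Omega_d$ I would differentiate, using $\partial_t(UU_\xi)=U_tU_\xi+\tfrac12 U H_{0,\xi}$ before breaking, and integrate. Replacing $U$ by $U_0$ up to a drift controlled by $\|U_t\|_\infty\le\tfrac14M$ from \eqref{bound U_t}, the $\tfrac12 UH_{0,\xi}$ term produces the $\tfrac12 t\|\alpha'\|_\infty\|U_0H_{0,\xi}-\bar U_0\bar H_{0,\xi}\|_2$ contribution plus the higher-order $t$-corrections in $\|U_{0,\xi}-\bar U_{0,\xi}\|_2$ and $\|H_{0,\xi}-\bar H_{0,\xi}\|_2$; the $U_tU_\xi$ term, controlled again by $\|U_t\|_\infty\le\tfrac14M$ and by the difference of the two $U_t$, whose dependence on $V$ is exactly what brings in $\int_0^t\!\int_\R|V_\xi-\bar V_\xi|\,d\xi\,ds$ (estimated by Lemma~\ref{lemma V_xi and bar V_xi}), produces that integral and the $\tfrac14 t\|g_2(X_0)-g_2(\bar X_0)\|_2$ term, while the mixed pieces reduce to $\|g_3(X_0)-g_3(\bar X_0)\|_2$ up to the same drift corrections. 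For $g$ the decisive point is to compare $g(X(t))$ and $g(\bar X(t))$ \emph{directly} rather than through $\partial_t g$: on the dissipative set $g=y_\xi+(1-\alpha(y))H_{0,\xi}$, and the only occurrence of $\alpha$ in the difference is $\alpha(y)H_{0,\xi}-\alpha(\bar y)\bar H_{0,\xi}=\alpha(y)(H_{0,\xi}-\bar H_{0,\xi})+(\alpha(y)-\alpha(\bar y))\bar H_{0,\xi}$, estimated with $0\le\alpha\le1$ and the Lipschitz bound $|\alpha(y)-\alpha(\bar y)|\le\|\alpha'\|_\infty\|y-\bar y\|_\infty$; here $\bar H_{0,\xi}$ gives $\sqrt M$ in $L^2$, and \eqref{differenz:y} turns $\|y(t)-\bar y(t)\|_\infty$ into $\|y_0-\bar y_0\|_\infty+\int_0^t\|U-\bar U\|_\infty\,ds$, producing the two $\|\alpha'\|_\infty\sqrt M(\cdots)$ terms. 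The $y_\xi$ piece gives $\|y_{0,\xi}-\bar y_{0,\xi}\|_2+\int_0^t\|U_\xi-\bar U_\xi\|_2\,ds$, the conservative formula $g=y_\xi+V_\xi$ is matched using continuity at $\tau$, and the remaining dissipative cross terms are absorbed into $\|g_2(X_0)-g_2(\bar X_0)\|_2$ and $\|g_3(X_0)-g_3(\bar X_0)\|_2$.

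\textbf{Main obstacle.} The hard part is the estimate for $g$, and within it the fact that $\alpha$ is only assumed to lie in $W^{1,\infty}(\R)$: its derivative cannot be evaluated, let alone differenced, at the distinct points $y(\xi,t)$ and $\bar y(\xi,t)$. This is precisely why one must never integrate $\partial_t g$ across the dissipative region (which would force the illegal quantity $\alpha'(y)UH_{0,\xi}-\alpha'(\bar y)\bar U\bar H_{0,\xi}$) but instead difference the Lipschitz function $\alpha$ itself, and why $g_2$ and $g_3$ are built from the \emph{constant} $\|\alpha'\|_\infty$ rather than pointwise values of $\alpha'$. A secondary difficulty is that the set where $X$ has already broken but $\bar X$ has not has measure not controlled by $M$ and $t$ alone (cf. Corollary~\ref{cor:size of broken set}); here that difficulty is confined to $\int_0^t\!\int_\R|V_\xi-\bar V_\xi|\,d\xi\,ds$ and is already resolved inside Lemma~\ref{lemma V_xi and bar V_xi}, so no further relabeling argument is required.
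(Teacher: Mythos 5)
Your proposal is correct and follows essentially the same route as the paper's proof: the same ordering ($g_2$, then $g_3$, then $g$), the same decomposition of $\R$ by $\Omega_c/\Omega_d$ membership refined by breaking times, the same key ingredients (continuity of the $g$-functions across $\tau$ via \eqref{eq:lim U_xi y_xi}, the product splittings with $\|U_t\|_\infty\le\tfrac14M$ and $\|\bar U_{0,\xi}\|_2\le\sqrt M$, the Lipschitz differencing of $\alpha$ combined with \eqref{differenz:y}, and deferral of the $V_\xi$ double integral to Lemma~\ref{lemma V_xi and bar V_xi}). Your closed-form expression $g_2=\|\alpha'\|_\infty H_\infty\min(U_{0,\xi}+\tfrac12H_{0,\xi}t,0)\mathbf{1}_{\{r_0=0\}}$ is a tidy repackaging of the paper's case analysis rather than a different argument, and your remark about never differencing $\alpha'$ at two distinct points is the right obstruction, even though the paper does (legitimately) integrate the one-sided magnitude bound $|\tfrac{d}{ds}\alpha(y(s))|\le\|\alpha'\|_\infty|U(s)|$ in the mixed cases.
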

\begin{proof}
We omit $\xi$ from the notation in this proof. Recall the splitting of $\R$ into $R_{cont}(t)$, $R_{mix}(t)$, and $R_{disc}(t)$ given in Definition \ref{real line split}.

Let us prove the estimate for $g_2$. Assume that $\xi\in R_{cont}(t)$, then if $\xi\in\Omega_c(X_0)\cap\Omega_c(\bar X_0)$ we have
\begin{equation}
\label{eq:g_2 cont cc}
g_2(X(t))-g_2(\bar X(t)) = 0-0 = 0.
\end{equation}
We can assume without loss of generality that $\xi\in \Omega_c(X_0)\cap\Omega_d(\bar X_0)$ in the mixed case since the argument does not use the property $\bar y_0+\bar H_0=id$. We have
\begin{equation}
|g_2(X(t))-g_2(\bar X(t))| = \|\alpha'\|_\infty \bar H_\infty|\bar U_\xi(t)|,
\end{equation}
and from \eqref{eq:lim U_xi y_xi} we know that $|\bar U_\xi(t)|\leq |\bar U_{0,\xi}|$, and hence
\begin{equation}
\label{eq:g_2 cont cd}
|g_2(X(t))-g_2(\bar X(t))|\leq |g_2(X_0)-g_2(\bar X_0)|.
\end{equation}

If $\xi\in\Omega_d(X_0)\cap\Omega_d(\bar X_0)$, then \eqref{eq:lim U_xi y_xi} implies that
\begin{align}
\label{eq:g_2 cont dd}
|g_2(X(t))-g_2(\bar X(t))| &\leq \|\alpha'\|_\infty|H_\infty-\bar H_\infty||\bar U_{\xi}(t)| + \|\alpha'\|_\infty H_\infty|U_\xi(t)-\bar U_\xi(t)|\nonumber\\
	&\leq \|\alpha'\|_\infty|\bar U_{0,\xi}||H_\infty-\bar H_\infty| \nonumber\\
	&\quad + \|\alpha'\|_\infty M(|U_{0,\xi}-\bar U_{0,\xi}|+\frac 12t|H_{0,\xi}-\bar H_{0,\xi}|).
\end{align}

Assume that $\xi\in R_{mix}(t)$. Then if $\xi\in\Omega_c(X_0)\cap\Omega_d(\bar X_0)$ or $\xi\in\Omega_d(X_0)\cap\Omega_c(\bar X_0)$ we have that
\begin{equation}
\label{eq:g_2 mix cd}
g_2(X(t))-g_2(\bar X(t)) = 0-0 = 0.
\end{equation}
If $\xi\in \Omega_d(X_0)\cap\Omega_d(\bar X_0)$, and if we assume without loss of generality that $\tau\leq t < \bar\tau$, then we have
\begin{equation}
|g_2(X(t))-g_2(\bar X(t))| = \|\alpha'\|_\infty \bar H_\infty|\bar U_\xi(t)|,
\end{equation}
and from \eqref{eq:lim U_xi y_xi} we know that $|\bar U_\xi(t)|\leq |\bar U_{\xi}(\tau)|$, and hence
\begin{equation}
|g_2(X(t))-g_2(\bar X(t))|\leq \|\alpha'\|_\infty\bar H_\infty|\bar U_\xi(\tau)- U_\xi(\tau)|.
\end{equation}
Thus following the same line as in \eqref{eq:g_2 cont dd}, we end up with
\begin{equation}
\label{eq:g_2 mix dd}
|g_2(X(t))-g_2(\bar X(t))|\leq \|\alpha'\|_\infty M(|U_{0,\xi}-\bar U_{0,\xi}|+\frac 12t|H_{0,\xi}-\bar H_{0,\xi}|).
\end{equation}

Assume that $\xi\in R_{disc}(t)$. Then
\begin{equation}
\label{eq:g_2 disc}
g_2(X(t))-g_2(\bar X(t)) = 0-0 = 0.
\end{equation}

From \eqref{eq:g_2 cont cc}, \eqref{eq:g_2 cont cd}, \eqref{eq:g_2 cont dd}, \eqref{eq:g_2 mix cd}, \eqref{eq:g_2 mix dd}, and \eqref{eq:g_2 disc} we get
\begin{align}
|g_2(X(t))-g_2(\bar X(t))| &\leq |g_2(X_0)-g_2(\bar X_0)| \nonumber\\
	&\quad + \|\alpha'\|_\infty|\bar U_{0,\xi}||H_\infty-\bar H_\infty|\nonumber\\
	&\quad + \|\alpha'\|_\infty M|U_{0,\xi}-\bar U_{0,\xi}|\nonumber\\
	&\quad + \frac 12\|\alpha'\|_\infty Mt|H_{0,\xi}-\bar H_{0,\xi}|.
\end{align}
The estimate \eqref{est:g2} is obtained after some direct computations by taking the $L^2(\R)$-norm on both sides.

Next we show the inequality for $g_3$. Assume that $\xi\in R_{cont}(t)$, then if $\xi\in\Omega_c(X_0)\cap\Omega_c(\bar X_0)$ we have
\begin{equation}
\label{eq:g_3 cont cc}
g_3(X(t))-g_3(\bar X(t)) = 0-0 = 0.
\end{equation}
For the mixed case assume without loss of generality $\xi\in \Omega_c(X_0)\cap\Omega_d(\bar X_0)$ since the argument does not depend on the property $\bar y_0+\bar H_0=id$. We have
\begin{equation}
|g_3(X(t))-g_3(\bar X(t))| = \|\alpha'\|_\infty |\bar U(t)||\bar U_\xi(t)|,
\end{equation}
and from \eqref{eq:lim U_xi y_xi} we know that $|\bar U_\xi(t)|\leq |\bar U_{0,\xi}|$, and hence by \eqref{bound U_t} we have
\begin{align}
\label{eq:g_3 cont cd}
|g_3(X(t))-g_3(\bar X(t))| &= \|\alpha'\|_\infty|\bar U(t)||\bar U_\xi(t)| \nonumber\\
	&\leq \|\alpha'\|_\infty|\bar U(t)||\bar U_{0,\xi}|\nonumber\\
	&\leq \|\alpha'\|_\infty|\bar U_0||\bar U_{0,\xi}| + \frac 14t\|\alpha'\|\bar H_\infty|\bar U_{0,\xi}|\nonumber\\
	&\leq |g_3(X_0)-g_3(\bar X_0)| + \frac 14t |g_2(X_0)-g_2(\bar X_0)|.
\end{align}
If $\xi\in\Omega_d(X_0)\cap\Omega_d(\bar X_0)$, then
\begin{align}
g_3(X(t))-g_3(\bar X(t)) &= \|\alpha'\|_\infty U(t)U_\xi(t)-\|\alpha'\|_\infty\bar U(t)\bar U_\xi(t)\nonumber\\
	&= \|\alpha'\|_\infty\left(U_0+\int_0^tU_t(s)\:d s\right)\left(U_{0,\xi}+\frac 12t H_{0,\xi}\right)\nonumber\\&\quad - \|\alpha'\|_\infty\left(\bar U_0+\int_0^t\bar U_t(s)\:d s\right)\left(\bar U_{0,\xi}+\frac 12t \bar H_{0,\xi}\right)\nonumber\\
	&= g_3(X_0)-g_3(\bar X_0) + \frac 12 t\|\alpha'\|_\infty\left(U_0H_{0,\xi}-\bar U_0\bar H_{0,\xi}\right)\nonumber\\&\quad + \left(\bar U_{0,\xi}+ \frac 12t \bar H_{0,\xi}\right)\|\alpha'\|_\infty\int_0^t\left(U_t(s)-\bar U_t(s)\right)\:d s\nonumber\\
	\label{eq:g_3 cont ddd}
	&\quad + \|\alpha'\|_\infty\int_0^t U_t(s)\:d s\left(U_{0,\xi}+\frac 12t H_{0,\xi}-\bar U_{0,\xi}-\frac 12 t\bar H_{0,\xi}\right).
\end{align}
Since \eqref{eq:lim U_xi y_xi} implies that $|\bar U_\xi(t)|=|\bar U_{0,\xi}+\frac12 t\bar H_{0,\xi}|$ is decreasing in time and due to \eqref{bound U_t} we have that
\begin{align}
\label{eq:g_3 cont dd}
|g_3(X(t))-g_3(\bar X(t))| &
\leq |g_3(X_0)-g_3(\bar X_0)| + \frac 12t\|\alpha'\|_\infty|U_0H_{0,\xi}-\bar U_0\bar H_{0,\xi}| \nonumber\\
&\quad+ \frac14\|\alpha'\|_\infty Mt|U_{0,\xi}-\bar U_{0,\xi}| + \frac 18\|\alpha'\|_\infty Mt^2|H_{0,\xi}-\bar H_{0,\xi}| \nonumber\\
&\quad+ \frac 14\|\alpha'\|_\infty|\bar U_{0,\xi}+\frac12 t\bar H_{0,\xi}|\int_0^t\int_{\R}|V_\xi(\xi,s)-\bar V_\xi(\xi,s)|\:d\xi d s\\ \label{g_3 cont dd}
&\leq |g_3(X_0)-g_3(\bar X_0)| + \frac 12t\|\alpha'\|_\infty|U_0H_{0,\xi}-\bar U_0\bar H_{0,\xi}| \nonumber\\&\quad+ \frac14\|\alpha'\|_\infty Mt|U_{0,\xi}-\bar U_{0,\xi}| + \frac 18\|\alpha'\|_\infty Mt^2|H_{0,\xi}-\bar H_{0,\xi}| \nonumber\\&\quad+ \frac 14\|\alpha'\|_\infty|\bar U_{0,\xi}|\int_0^t\int_{\R}|V_\xi(\xi,s)-\bar V_\xi(\xi,s)|\:d\xi d s.
\end{align}
Note that we can rewrite \eqref{eq:g_3 cont ddd} in the following way,
\begin{align}
g_3(X(t))-g_3(\bar X(t)) &= g_3(X_0)-g_3(\bar X_0) + \frac 12 t\|\alpha'\|_\infty\left(U_0H_{0,\xi}-\bar U_0\bar H_{0,\xi}\right)\nonumber\\&\quad + \|\alpha'\|_\infty\left(U_{0,\xi}+ \frac 12t H_{0,\xi}\right)\int_0^t\left(U_t(s)-\bar U_t(s)\right)\:d s\nonumber\\&\quad + \|\alpha'\|_\infty\int_0^t \bar U_t(s)\:d s\left(U_{0,\xi}+\frac 12t H_{0,\xi}-\bar U_{0,\xi}-\frac 12 t\bar H_{0,\xi}\right),
\end{align}
and hence
\begin{align}
\label{eq:g_3 sym}
|g_3(X(t))-g_3(\bar X(t))| &\leq |g_3(X_0)-g_3(\bar X_0)| + \frac 12t\|\alpha'\|_\infty|U_0H_{0,\xi}-\bar U_0\bar H_{0,\xi}| \nonumber\\&\quad+ \frac14\|\alpha'\|_\infty Mt|U_{0,\xi}-\bar U_{0,\xi}| + \frac 18\|\alpha'\|_\infty Mt^2|H_{0,\xi}-\bar H_{0,\xi}| \nonumber\\&\quad+ \frac 14\|\alpha'\|_\infty|U_{0,\xi}+\frac12 t H_{0,\xi}|\int_0^t\int_{\R}|V_\xi(\xi,s)-\bar V_\xi(\xi,s)|\:d\xi d s.
\end{align}

Assume that $\xi\in\R_{mix}(t)$. If $\xi\in\Omega_c(X_0)\cap\Omega_d(\bar X_0)$ or $\xi\in\Omega_d(X_0)\cap\Omega_c(\bar X_0)$, then
\begin{equation}
\label{eq:g_3 mix cd}
g_3(X(t))-g_3(\bar X(t)) = 0-0 = 0.
\end{equation}
Let $\xi\in \Omega_d(X_0)\cap\Omega_d(\bar X_0)$ and assume without loss of generality that $\tau\leq t<\bar\tau$. Then from \eqref{eq:lim U_xi y_xi} we know that $|\bar U_\xi(t)|\leq |\bar U_{\xi}(\tau)|$, and hence by \eqref{bound U_t}
\begin{align}
|g_3(X(t))-g_3(\bar X(t))| &= \|\alpha'\|_\infty|\bar U(t)||\bar U_\xi(t)| \nonumber\\
	&\leq \|\alpha'\|_\infty|\bar U(t)||\bar U_{\xi}(\tau)|\nonumber\\
	&\leq \|\alpha'\|_\infty|\bar U(\tau)||\bar U_{\xi}(\tau)| + \frac 14(t-\tau)\|\alpha'\|\bar H_\infty|\bar U_{\xi}(\tau)|\nonumber\\
	&\leq |g_3(X(\tau))-g_3(\bar X(\tau))| + \frac 14(t-\tau) |g_2(X(\tau))-g_2(\bar X(\tau))|.
\end{align}
Combining \eqref{eq:g_2 cont dd} and \eqref{g_3 cont dd} then yields
\begin{align}
|g_3(X(t))-g_3(\bar X(t))| &\leq |g_3(X_0)-g_3(\bar X_0)| + \frac 12\tau\|\alpha'\|_\infty|U_0H_{0,\xi}-\bar U_0\bar H_{0,\xi}| \nonumber\\
	&\quad+ \frac14\|\alpha'\|_\infty M\tau|U_{0,\xi}-\bar U_{0,\xi}| + \frac 18\|\alpha'\|_\infty M\tau^2|H_{0,\xi}-\bar H_{0,\xi}| \nonumber\\
	&\quad+ \frac 14\|\alpha'\|_\infty|\bar U_{0,\xi}|\int_0^\tau\int_{\R}|V_\xi(\xi,s)-\bar V_\xi(\xi,s)|\:d\xi d s\nonumber\\
	&\quad + \frac14\|\alpha'\|_\infty|\bar U_{0,\xi}|(t-\tau)|H_\infty-\bar H_\infty|\nonumber\\
	&\quad + \frac14\|\alpha'\|_\infty M(t-\tau)(|U_{0,\xi}-\bar U_{0,\xi}|+\frac 12\tau|H_{0,\xi}-\bar H_{0,\xi}|),
\end{align}
and thus
\begin{align}
\label{eq:g_3 mix dd}
|g_3(X(t))-g_3(\bar X(t))| &\leq |g_3(X_0)-g_3(\bar X_0)| + \frac 12t\|\alpha'\|_\infty|U_0H_{0,\xi}-\bar U_0\bar H_{0,\xi}| \nonumber\\
	&\quad + \frac14\|\alpha'\|_\infty Mt|U_{0,\xi}-\bar U_{0,\xi}|+\frac 18\|\alpha'\|_\infty Mt^2|H_{0,\xi}-\bar H_{0,\xi}|\nonumber\\
	&\quad + \frac14\|\alpha'\|_\infty|\bar U_{0,\xi}|t|H_\infty-\bar H_\infty|\nonumber\\
	&\quad + \frac 14\|\alpha'\|_\infty|\bar U_{0,\xi}|\int_0^t\int_{\R}|V_\xi(\xi,s)-\bar V_\xi(\xi,s)|\:d\xi d s.
\end{align}

Assume that $\xi\in R_{disc}(t)$. Then
\begin{equation}
\label{eq:g_3 disc}
g_3(X(t))-g_3(\bar X(t)) = 0-0 = 0.
\end{equation}

From \eqref{eq:g_3 cont cc}, \eqref{eq:g_3 cont cd}, \eqref{g_3 cont dd}, \eqref{eq:g_3 mix cd}, \eqref{eq:g_3 mix dd}, and \eqref{eq:g_3 disc} we get
\begin{align}
|g_3(X(t))-g_3(\bar X(t))| &\leq |g_3(X_0)-g_3(\bar X_0)| + \frac 14t|g_2(X_0)-g_2(\bar X_0)|\nonumber\\
	&\quad + \frac 12t\|\alpha'\|_\infty|U_0H_{0,\xi}-\bar U_0\bar H_{0,\xi}|\nonumber\\
	&\quad + \frac 14\|\alpha'\|_\infty Mt|U_{0,\xi}-\bar U_{0,\xi}|\nonumber\\
	&\quad + \frac 18\|\alpha'\|_\infty Mt^2|H_{0,\xi}-\bar H_{0,\xi}|\nonumber\\
	&\quad + \frac 14\|\alpha'\|_\infty|\bar U_{0,\xi}|t|H_\infty-\bar H_\infty|\nonumber\\
	&\quad + \frac 14\|\alpha'\|_\infty|\bar U_{0,\xi}|\int_0^t\int_\R|V_\xi(\xi,s)-\bar V_\xi(\xi,s)|\:d\xi d s.
\end{align}
The estimate \eqref{est:g3} is obtained after some direct computations by taking the $L^2(\R)$-norm on both sides.

We prove the estimate for $g$. Assume that $\xi\in R_{cont}(t)$, and let $\xi\in \Omega_c(X_0)\cap\Omega_c(\bar X_0)$. Then
\begin{align}
\label{eq:g cont cc}
|g(X(t))-g(\bar X(t))| &= |H_{0,\xi}-\bar H_{0,\xi} + y_\xi(t)-\bar y_\xi(t)|\nonumber\\
	&\leq |g(X_0)-g(\bar X_0)| + \int_0^t|U_\xi(s)-\bar U_\xi(s)|\:d s.
\end{align}
For the mixed case assume without loss of generality $\xi\in \Omega_d(X_0)\cap\Omega_c(\bar X_0)$, then
\begin{equation}
g(X(t))-g(\bar X(t)) = g(X_0)-g(\bar X_0) + \int_0^t\left(U_\xi(s)-\bar U_\xi(s)\right)\:d s - \int_0^t\frac d{d s}\alpha(y(s))H_{0,\xi}\:d s.
\end{equation}
We have
\begin{align}
\int_0^t\frac d{d s}\alpha(y(s))H_{0,\xi}\:d s &\leq \int_0^t\|\alpha'\|_\infty|U(s)|H_{0,\xi}\:d s \nonumber\\
	&\leq \int_0^t\|\alpha'\|_\infty|U_0 + \int_0^sU_t(\sigma)\:d\sigma|H_{0,\xi}\:d s \nonumber\\
	&\leq \|\alpha'\|_\infty \left(|U_0|t+\frac 18H_\infty t^2\right) H_{0,\xi} \nonumber\\
	&\leq 2\|\alpha'\|_\infty|U_0|(U_\xi(t)-U_{0,\xi})\nonumber\\
	&\quad + \frac 14\|\alpha'\|_\infty tH_\infty(U_\xi(t)-U_{0,\xi}) \nonumber\\
	&\leq - 2\|\alpha'\|_\infty|U_0|U_{0,\xi} - \frac 14t\|\alpha'\|_\infty H_\infty U_{0,\xi} \nonumber\\ \label{alphaest}
	&\leq 2|g_3(X_0)-g_3(\bar X_0)| + \frac 14t|g_2(X_0)-g_2(\bar X_0)|,
\end{align}
hence
\begin{align}
\label{eq:g cont cd}
|g(X(t))-g(\bar X(t))| 
	&\leq |g(X_0)-g(\bar X_0)| + \int_0^t|U_\xi(s)-\bar U_\xi(s)|\:d s\nonumber\\
	&\quad + 2|g_3(X_0)-g_3(\bar X_0)| + \frac 14t|g_2(X_0)-g_2(\bar X_0)|.
\end{align}
Here we used that $U_{0,\xi}\leq U_\xi(t)\leq 0$.

If $\xi\in\Omega_d(X_0)\cap\Omega_d(\bar X_0)$ we have
\begin{align}
\label{eq:g cont dd}
|g(X(t)) - g(\bar X(t))| &= |y_\xi(t)-\bar y_\xi(t) +(1- \alpha(y(t)))H_{0,\xi}-(1-\alpha(\bar y(t)))\bar H_{0,\xi}| \nonumber\\	
	&\leq  |y_{0,\xi}-\bar y_{0,\xi}| + \int_0^t|U_\xi(s)-\bar U_\xi(s)|\:d s \nonumber\\
	&\quad+|H_{0,\xi}-\bar H_{0,\xi}|+  \|\alpha'\|_\infty \bar H_{0,\xi}\|y_0-\bar y_0\|_\infty  \nonumber\\
	& \quad   + \int_0^t\|\alpha'\|_\infty \bar H_{0,\xi}\|U(s)-\bar U(s)\|_\infty\:d s.
\end{align}

Assume that $\xi\in R_{mix}(t)$, and let $\xi\in\Omega_c(X_0)\cap\Omega_d(\bar X_0)$. Then
\begin{align}
|g(X(t))-g(\bar X(t))| &= | y_\xi(t)-\bar y_\xi(t)+H_{0,\xi}-\bar V_\xi(t) |\nonumber\\
	&\leq |g(X(\bar\tau))-g(\bar X(\bar\tau))| + \int_{\bar\tau}^t|U_\xi(s)-\bar U_\xi(s)|\:d s\nonumber\\
&\leq |g(X_0)-g(\bar X_0)| + 2|g_3(X_0)-g_3(\bar X_0)| \nonumber\\ \label{eq:g mix cd}
	&\quad + \frac 14t|g_2(X_0)-g_2(\bar X_0)| + \int_0^t|U_\xi(s)-\bar U_\xi(s)|\:d s,
\end{align}
where we used \eqref{eq:g cont cd} in the last step.
Both the argument and the result are the same in the case $\xi\in\Omega_d(X_0)\cap\Omega_c(\bar X_0)$. 

If $\xi\in\Omega_d(X_0)\cap\Omega_d(\bar X_0)$, then if $\bar\tau\leq t<\tau$,
\begin{align}
g(X(t))-g(\bar X(t)) &= y_\xi(t)-\bar y_\xi(t) + H_{0,\xi} -\bar H_{0,\xi} + \alpha(\bar y(\bar\tau))\bar H_{0,\xi} - \alpha(y(t))H_{0,\xi}\nonumber\\
	&= g(X(\bar\tau))-g(\bar X(\bar\tau)) + \int_{\bar\tau}^t\left(U_\xi(s)-\bar U_\xi(s)\right)\:d s\nonumber\\
	&\quad - \int_{\bar\tau}^t\frac{d}{d s}\alpha(y(s))H_{0,\xi}\:d s.
\end{align}
Then we can proceed as in \eqref{alphaest}, and apply \eqref{eq:g_2 cont dd}, \eqref{eq:g_3 cont dd}, and \eqref{eq:g cont dd}, to obtain
\begin{align}
\label{eq:g mix dd}
|g(X(t))-g(\bar X(t))| &\leq |g(X(\bar\tau))-g(\bar X(\bar\tau))| + 2|g_3(X(\bar\tau))-g_3(\bar X(\bar\tau))|\nonumber\\
	&\quad + \frac 14(t-\bar\tau)|g_2(X(\bar\tau))-g_2(\bar X(\bar\tau))| + \int_{\bar\tau}^t|U_\xi(s)-\bar U_\xi(s)|\:d s\nonumber\\
	&\leq |y_{0,\xi}-\bar y_{0,\xi}|+\int_0^{\bar \tau} | U_\xi(s)-\bar U_\xi(s)|ds +| H_{0,\xi}-\bar H_{0,\xi}| \nonumber \\
	& \quad + \|\alpha'\|_\infty\bar H_{0,\xi}\|y_0-\bar y_0\|_\infty \nonumber\\
	&\quad +\|\alpha'\|_\infty \bar H_{0,\xi}\int_0^{\bar \tau}  \| U(s)-\bar U(s)\|_\infty  ds\nonumber\\
	&\quad + 2|g_3(X_0)-g_3(\bar X_0)| + \bar\tau\|\alpha'\|_\infty|U_0H_{0,\xi}-\bar U_0\bar H_{0,\xi}| \nonumber\\
	&\quad + \frac12\|\alpha'\|_\infty M\bar\tau|U_{0,\xi}-\bar U_{0,\xi}| + \frac 14\|\alpha'\|_\infty M\bar\tau^2|H_{0,\xi}-\bar H_{0,\xi}| \nonumber\\
	&\quad + \frac 12\|\alpha'\|_\infty|\bar U_{0,\xi}+\frac 12 \bar\tau\bar H_{0,\xi}|\int_0^{\bar\tau}\int_{\R}|V_\xi(\xi,s)-\bar V_\xi(\xi,s)|\:d\xi d s\nonumber\\
	&\quad + \frac 14\|\alpha'\|_\infty|\bar U_{0,\xi}|(t-\bar\tau)|H_\infty-\bar H_\infty|\nonumber\\
	&\quad + \frac 14\|\alpha'\|_\infty M(t-\bar\tau)(|U_{0,\xi}-\bar U_{0,\xi}|+\frac 12\bar\tau|H_{0,\xi}-\bar H_{0,\xi}|) \nonumber\\
	&\quad + \int_{\bar\tau}^t|U_\xi(s)-\bar U_\xi(s)|\:d s \nonumber\\
	&\leq |y_{0,\xi}-\bar y_{0,\xi}| + \frac 12\|\alpha'\|_\infty Mt|U_{0,\xi}-\bar U_{0,\xi}|\nonumber\\
	&\quad + \left(1+\frac 14\|\alpha'\|_\infty Mt^2\right)|H_{0,\xi}-\bar H_{0,\xi}| \nonumber\\
	&\quad + 2|g_3(X_0)-g_3(\bar X_0)| + t\|\alpha'\|_\infty|U_0H_{0,\xi}-\bar U_0\bar H_{0,\xi}| \nonumber\\
	&\quad + \|\alpha'\|_\infty\bar H_{0,\xi}\|y_0-\bar y_0\|_\infty \nonumber\\
	&\quad + \frac 14\|\alpha'\|_\infty|\bar U_{0,\xi}|t|H_\infty-\bar H_\infty| \nonumber\\
	&\quad + \|\alpha'\|_\infty\bar H_{0,\xi} \int_0^t\|U(s)-\bar U(s)\|_\infty\:d s \nonumber\\ 
	&\quad + \int_0^t|U_\xi(s)-\bar U_\xi(s)|\:d s. 
\end{align}
The term
\begin{equation}
\frac 12\|\alpha'\|_\infty|\bar U_{0,\xi} + \frac 12\bar \tau \bar H_{0,\xi}|\int_0^t\int_\R|V_\xi(\xi,s)-\bar V_\xi(\xi,s)|\:d\xi d s,
\end{equation}
vanishes since $\bar U_{0,\xi} + \frac 12\bar\tau \bar H_{0,\xi}=0$ by the definition of $\bar\tau$. 

The case $\tau\leq t<\bar\tau$ is similar. Combining \eqref{eq:g_3 sym} and $U_{0,\xi}+ \frac 12\tau H_{0,\xi}= 0$, we get that
\begin{align}
|g_3(X(\tau))-g_3(\bar X(\tau))| &\leq |g_3(X_0)-g_3(\bar X_0)| + \frac 12 \tau\|\alpha'\|_\infty\left|U_0H_{0,\xi}-\bar U_0\bar H_{0,\xi}\right|\nonumber\\
	&\quad + \frac 14\|\alpha'\|_\infty M\tau\left(|U_{0,\xi}-\bar U_{0,\xi}|+\frac 12\tau |H_{0,\xi}-\bar H_{0,\xi}|\right).
\end{align}
Thus, following the same lines as in \eqref{eq:g mix dd} yields,
\begin{align}
\label{eq:g mix bar dd}
|g(X(t))-g(\bar X(t))| &\leq |y_{0,\xi}-\bar y_{0,\xi}| + \frac 12\|\alpha'\|_\infty Mt|U_{0,\xi}-\bar U_{0,\xi}|\nonumber\\
	&\quad + \left(1+\frac 14\|\alpha'\|_\infty Mt^2\right)|H_{0,\xi}-\bar H_{0,\xi}| \nonumber\\
	&\quad + 2|g_3(X_0)-g_3(\bar X_0)| + t\|\alpha'\|_\infty|U_0H_{0,\xi}-\bar U_0\bar H_{0,\xi}| \nonumber\\
	&\quad + \|\alpha'\|_\infty\bar H_{0,\xi}\|y_0-\bar y_0\|_\infty \nonumber\\
	&\quad + \frac 14\|\alpha'\|_\infty|\bar U_{0,\xi}|t|H_\infty-\bar H_\infty| \nonumber\\
	&\quad + \|\alpha'\|_\infty\bar H_{0,\xi}\int_0^t\|U(s)-\bar U(s)\|_\infty\:d s  \nonumber\\
	&\quad + \int_0^t|U_\xi(s)-\bar U_\xi(s)|\:d s.
\end{align}

Assume that $\xi\in R_{disc}(t)$, and assume without loss of generality that $\bar\tau\leq\tau$, then
\begin{equation}
|g(X(t))-g(\bar X(t))| \leq |g(X(\tau))-g(\bar X(\tau))| + \int_{\tau}^t|U_\xi(s)-\bar U_\xi(s)|\:d s.
\end{equation}
Thus, from \eqref{eq:g mix dd} and \eqref{eq:g mix bar dd}, we get that
\begin{align}
\label{eq:g disc}
|g(X(t))-g(\bar X(t))| &\leq |y_{0,\xi}-\bar y_{0,\xi}| + \frac 12\|\alpha'\|_\infty Mt|U_{0,\xi}-\bar U_{0,\xi}|\nonumber\\
	&\quad + \left(1+\frac 14\|\alpha'\|_\infty Mt^2\right)|H_{0,\xi}-\bar H_{0,\xi}| \nonumber\\
	&\quad + 2|g_3(X_0)-g_3(\bar X_0)| + t\|\alpha'\|_\infty|U_0H_{0,\xi}-\bar U_0\bar H_{0,\xi}| \nonumber\\
	&\quad + \|\alpha'\|_\infty\bar H_{0,\xi}\|y_0-\bar y_0\|_\infty \nonumber\\
	&\quad + \frac 14\|\alpha'\|_\infty|\bar U_{0,\xi}|t|H_\infty-\bar H_\infty| \nonumber\\
	&\quad + \|\alpha'\|_\infty\bar H_{0,\xi}\int_0^t\|U(s)-\bar U(s)\|_\infty\:d s \nonumber\\
	&\quad + \int_0^t|U_\xi(s)-\bar U_\xi(s)|\:d s.
\end{align}

Combining \eqref{eq:g cont cc}, \eqref{eq:g cont cd}, \eqref{eq:g cont dd}, \eqref{eq:g mix cd}, \eqref{eq:g mix dd}, \eqref{eq:g mix bar dd}, and \eqref{eq:g disc}  yields
\begin{align}
|g(X(t))-g(\bar X(t))| &\leq  |y_{0,\xi}-\bar y_{0,\xi}|\nonumber\\
	&\quad + \frac 12\|\alpha'\|_\infty Mt|U_{0,\xi}-\bar U_{0,\xi}|\nonumber\\
	&\quad +  (1 + \frac 14\|\alpha'\|_\infty Mt^2)|H_{0,\xi}-\bar H_{0,\xi}|\nonumber\\
	&\quad + \|\alpha'\|_\infty \bar H_{0,\xi}\|y_0-\bar y_0\|_\infty\nonumber\\
	&\quad + t\|\alpha'\|_\infty |U_0H_{0,\xi}-\bar U_0\bar H_{0,\xi}|\nonumber\\
	&\quad + |g(X_0)-g(\bar X_0)| + 2|g_3(X_0)-g_3(\bar X_0)|\nonumber\\
	&\quad + \frac 14t|g_2(X_0)-g_2(\bar X_0)|\nonumber\\
	&\quad + \int_0^t|U_\xi(s)-\bar U_\xi(s)|\:d s\nonumber\\
	&\quad + \frac 14\|\alpha'\|_\infty |\bar U_{0,\xi}|t\|H_{0,\xi}-\bar H_{0,\xi}\|_1\nonumber\\
	&\quad + \|\alpha'\|_\infty \bar H_{0,\xi}\int_0^t\|U(s)-\bar U(s)\|_\infty\:d s.
\end{align}
The result is obtained after some direct computations by applying the $L^2(\R)$-norm to both sides of the above estimate and using \eqref{bound U_0xi norm}.
\end{proof}

A Lipschitz theorem can now be stated for the flow in Lagrangian coordinates. The idea is to apply the estimates in Lemmas \ref{time_Z}, \ref{lemma V_xi and bar V_xi}, \ref{lemma y and U and UH_xi}, and \ref{lemma g-functions} together with Gronwall's inequality.
\begin{theorem}
\label{theorem lipschitz lagrangian}
For solutions $X(t),\bar X(t)$ of \eqref{characteristic_system_a_dissipative} with initial data $X_0\in\F_i^{\alpha,M}$ and $\bar X_0\in\F_{i,0}^{\alpha,M}$ the estimate
\begin{equation}
\tilde d(X(t),\bar X(t))\leq C_{M,\alpha}(t)\tilde d(X_0,\bar X_0),
\end{equation}
holds, with  $C_{M,\alpha}(t) = \tilde C_{M,\alpha}(t)e^{t\bar C_{M,\alpha}(t)}$, where
\begin{align}
\tilde C_{M,\alpha}(t) &= 3+\frac 32t+\frac 12t^2+\frac 3{16}t^3+\sqrt M\left(1+\frac 14t+\frac 14t^2+\frac 1{16}t^3\right)\nonumber\\
	&\quad + \|\alpha'\|_\infty\sqrt M\left(5+2t+t^2+\frac 38t^3\right) + \|\alpha'\|_\infty M\left(3+\frac 54t+\frac 12t^2+\frac 18t^3\right) ,\\
\bar C_{M,\alpha}(t) &= 2+\|\alpha'\|_\infty \sqrt M +\sqrt M\left(\frac 12+\frac 18t+\frac 1{16}t^2\right)+\|\alpha'\|_\infty M\left(1+\frac 14t+\frac 18t^2\right).
\end{align}
\end{theorem}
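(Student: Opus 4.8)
The plan is to assemble the four estimates furnished by Lemmas~\ref{time_Z}, \ref{lemma V_xi and bar V_xi}, \ref{lemma y and U and UH_xi}, and \ref{lemma g-functions} into a single scalar integral inequality for $\phi(t) = \tilde d(X(t),\bar X(t))$ and then to close the argument with Gronwall's inequality. The crucial structural observation is that each of the eight terms making up $\tilde d(X(t),\bar X(t))$ is, by the cited lemmas, bounded by a $t$-dependent coefficient times the corresponding collection of terms of $\tilde d(X_0,\bar X_0)$, plus time integrals whose integrands are themselves among the eight terms evaluated at an intermediate time $s\in[0,t]$. Since every such integrand is dominated by $\phi(s)$, the whole coupled system collapses to a single Gronwall inequality.

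First I would eliminate the double integral $\int_0^t\int_\R|V_\xi-\bar V_\xi|\:d\xi\,ds$, which is the only quantity occurring in the lemmas that is not directly one of the terms of $\tilde d$. It appears in the bounds for $\|U(t)-\bar U(t)\|_\infty$ and $\|U(t)H_\xi-\bar U(t)\bar H_\xi\|_2$ in Lemma~\ref{lemma y and U and UH_xi} and in the $g_3$-bound \eqref{est:g3}. Substituting the estimate of Lemma~\ref{lemma V_xi and bar V_xi} in each of these three places expresses the double integral through the initial terms $\|H_{0,\xi}-\bar H_{0,\xi}\|_1$, $\|U_{0,\xi}-\bar U_{0,\xi}\|_2$, $\|y_{0,\xi}-\bar y_{0,\xi}\|_2$ together with the single integral $\int_0^t\big(\|g(X(s))-g(\bar X(s))\|_2+\|y_\xi(s)-\bar y_\xi(s)\|_2\big)\,ds$. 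Here the asymmetry between $X_0\in\F_i^{\alpha,M}$ and $\bar X_0\in\F_{i,0}^{\alpha,M}$ enters, since the relabeling trick behind \eqref{eq:tau relabeled} was what made the unbounded set $A(t)$ tractable in Lemma~\ref{lemma V_xi and bar V_xi}.

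Next I would bound every remaining integrand by $\phi(s)$, using that $\|y_\xi(s)-\bar y_\xi(s)\|_2$ and $\|U_\xi(s)-\bar U_\xi(s)\|_2$ (both sitting inside $\|Z-\bar Z\|_2$), $\|g(X(s))-g(\bar X(s))\|_2$, and $\|U(s)-\bar U(s)\|_\infty$ are each individual terms of $\tilde d(X(s),\bar X(s))=\phi(s)$. Adding the eight resulting inequalities and grouping the purely-initial contributions against the purely-integral contributions yields an inequality of the form
\begin{equation}
\phi(t) \leq \tilde C_{M,\alpha}(t)\,\phi(0) + \bar C_{M,\alpha}(t)\int_0^t\phi(s)\,ds,
\end{equation}
where $\tilde C_{M,\alpha}(t)$ and $\bar C_{M,\alpha}(t)$ are obtained precisely by collecting, respectively, all coefficients multiplying an initial-data term and all coefficients multiplying an integral. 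In making this identification one uses $\|U_{0,\xi}\|_2,\|\bar U_{0,\xi}\|_2\leq\sqrt M$ from \eqref{bound U_0xi norm} and $|H_\infty-\bar H_\infty|\leq\|H_{0,\xi}-\bar H_{0,\xi}\|_1$ to convert the stray $L^\infty$- and total-energy factors into admissible norms, and one uses that $\|H_\xi(t)-\bar H_\xi(t)\|_1=\|H_{0,\xi}-\bar H_{0,\xi}\|_1$ and $\|r(t)-\bar r(t)\|_2=\|r_0-\bar r_0\|_2$ are conserved, so they contribute only to $\tilde C_{M,\alpha}$.

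Finally, since both $\tilde C_{M,\alpha}$ and $\bar C_{M,\alpha}$ are non-decreasing polynomials in $t$, I would fix an arbitrary horizon $T$, replace the coefficients on $[0,T]$ by their values at $T$, apply the integral form of Gronwall's inequality to obtain $\phi(t)\leq\tilde C_{M,\alpha}(T)\,\phi(0)\,e^{\bar C_{M,\alpha}(T)t}$ on $[0,T]$, and evaluate at $t=T$ to recover $\phi(T)\leq\tilde C_{M,\alpha}(T)e^{T\bar C_{M,\alpha}(T)}\phi(0)=C_{M,\alpha}(T)\,\phi(0)$; as $T$ is arbitrary, this is the claimed estimate. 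The main obstacle is not conceptual but the careful bookkeeping: one must verify that the accumulated coefficients of the initial-data terms and of the integral terms sum to \emph{exactly} the stated $\tilde C_{M,\alpha}(t)$ and $\bar C_{M,\alpha}(t)$, which requires tracking the polynomial-in-$t$ prefactors together with the $\sqrt M$, $M$, and $\|\alpha'\|_\infty$ weights through the three substitutions of Lemma~\ref{lemma V_xi and bar V_xi} without double counting.
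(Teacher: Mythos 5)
Your proposal follows essentially the same route as the paper: combine Lemmas~\ref{time_Z}, \ref{lemma y and U and UH_xi}, and \ref{lemma g-functions}, eliminate the double integral $\int_0^t\int_\R|V_\xi-\bar V_\xi|\,d\xi\,ds$ via Lemma~\ref{lemma V_xi and bar V_xi}, dominate the remaining integrands by $\tilde d(X(s),\bar X(s))$ to reach $\tilde d(X(t),\bar X(t))\leq \tilde C_{M,\alpha}(t)\tilde d(X_0,\bar X_0)+\bar C_{M,\alpha}(t)\int_0^t\tilde d(X(s),\bar X(s))\,ds$, and close with Gronwall. Your explicit freezing of the non-decreasing coefficients at a horizon $T$ before applying Gronwall is a slightly more careful rendering of the paper's final step, but the argument is the same.
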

\begin{proof}
Combining Lemma \ref{time_Z}, \ref{lemma y and U and UH_xi}, and \ref{lemma g-functions} we end up with 
\begin{align}
\tilde d(X(t),\bar X(t)) &\leq \tilde d(X_0,\bar X_0)\nonumber\\
	&\quad + \|y_{0,\xi}-\bar y_{0,\xi}\|_2\nonumber\\
	&\quad + \|\alpha'\|_\infty M\left(1+ \frac 34t\right)\|U_{0,\xi}-\bar U_{0,\xi}\|_2\nonumber\\
	&\quad + \left(1+\frac12 t+\|\alpha'\|_\infty M\left(\frac 34t+\frac 38t^2\right)\right)\|H_{0,\xi}-\bar H_{0,\xi}\|_2\nonumber\\
	&\quad + \|\alpha'\|_\infty \sqrt M \|y_0-\bar y_0\|_\infty\nonumber\\
	&\quad + \|\alpha'\|_\infty\sqrt M\left(1+\frac 12t\right)\|H_{0,\xi}-\bar H_{0,\xi}\|_1\nonumber\\
	&\quad + \frac 32t\|\alpha'\|_\infty\|U_0H_{0,\xi}-\bar U_0\bar H_{0,\xi}\|_2\nonumber\\
	&\quad + \frac 12t\|g_2(X_0)-g_2(\bar X_0)\|_2\nonumber\\
	&\quad + 2\|g_3(X_0)-g_3(\bar X_0)\|_2\nonumber\\
	&\quad + 2\int_0^t\|U_\xi(s)-\bar U_\xi(s)\|_2\:d s\nonumber\\
	&\quad + \left(1+\|\alpha'\|_\infty\sqrt M\right)\int_0^t\|U(s)-\bar U(s)\|_\infty\:d s\nonumber\\
	&\quad + \frac 12\int_0^t(\|g(X(s))-g(\bar X(s))\|_2+\|y_\xi(s)-\bar y_\xi(s)\|_2)\:d s\nonumber\\
	&\quad + \left(\frac 14+\frac 12\|\alpha'\|_\infty\sqrt M\right)\int_0^t\int_\R|V_\xi(\xi,s)-\bar V_\xi(\xi,s)|d\xi\:d s.
\end{align}
Inserting the estimate from Lemma \ref{lemma V_xi and bar V_xi} yields
\begin{equation}
\tilde d(X(t),\bar X(t)) \leq \tilde C_{M,\alpha}(t)\tilde d(X_0,\bar X_0) + \bar C_{M,\alpha}(t)\int_0^t \tilde d(X(s),\bar X(s))\:d s,
\end{equation}
with
\begin{align}
\tilde C_{M,\alpha}(t) &= 3+\frac 32t+\frac 12t^2+\frac 3{16}t^3+\sqrt M\left(1+\frac 14t+\frac 14t^2+\frac 1{16}t^3\right)\nonumber\\
	&\quad + \|\alpha'\|_\infty\sqrt M\left(5+2t+t^2+\frac 38t^3\right) + \|\alpha'\|_\infty M\left(3+\frac 54t+\frac 12t^2+\frac 18t^3\right) ,\\
\bar C_{M,\alpha}(t) &= 2+\|\alpha'\|_\infty \sqrt M +\sqrt M\left(\frac 12+\frac 18t+\frac 1{16}t^2\right)+\|\alpha'\|_\infty M\left(1+\frac 14t+\frac 18t^2\right).
\end{align}
The theorem follows from Gronwall's inequality with $C_{M,\alpha}(t) = \tilde C_{M,\alpha}(t)e^{t\bar C_{M,\alpha}(t)}$.
\end{proof}

We are now ready to prove the Lipschitz continuity on $\F_{i,0}^{\alpha,M}$. The ingredients for the proof are Theorem \ref{theorem lipschitz lagrangian} and Lemma \ref{lemma J relabeling f}.
\begin{theorem}
\label{lipschitz_theorem}
Let $X,\bar{X}\in\F_{i,0}^{\alpha,M}$, then for all $t\geq 0$ it holds that
\begin{equation}
\label{lipschitz_equation}
d_M\left(t,X,\bar{X}\right) \leq \hat C_{M,\alpha}(t)d_M(0,X,\bar{X}),
\end{equation}
where $\hat C_{M,\alpha}(t)$ is given by $\hat C_{M,\alpha}=e^{\frac 12t}\tilde C_{M,\alpha}(t)e^{t\bar C_{M,\alpha}(t)}$,
\begin{align}
\tilde C_{M,\alpha}(t) &= 3+\frac 32t+\frac 12t^2+\frac 3{16}t^3+\sqrt M\left(1+\frac 14t+\frac 14t^2+\frac 1{16}t^3\right)\nonumber\\
	&\quad + \|\alpha'\|_\infty\sqrt M\left(5+2t+t^2+\frac 38t^3\right) + \|\alpha'\|_\infty M\left(3+\frac 54t+\frac 12t^2+\frac 18t^3\right) ,\\
\bar C_{M,\alpha}(t) &= 2+\|\alpha'\|_\infty \sqrt M +\sqrt M\left(\frac 12+\frac 18t+\frac 1{16}t^2\right)+\|\alpha'\|_\infty M\left(1+\frac 14t+\frac 18t^2\right).
\end{align}
\end{theorem}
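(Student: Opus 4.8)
The plan is to reduce the statement to a single $J$-term and then chain the three estimates already at our disposal. Fix $t\geq 0$ and let $\{X_n\}_{n=0}^N\subseteq\F_{i,0}^{\alpha,M}$ be any finite sequence with $X_0=X$ and $X_N=\bar X$. By the definition of $d_M$ it suffices to bound $\sum_{n=1}^N J(\Pi S_t(X_{n-1}),\Pi S_t(X_n))$ from above, and by Lemma \ref{lemma J relabeling f} each summand satisfies
\begin{equation}
J(\Pi S_t(X_{n-1}),\Pi S_t(X_n)) \leq e^{\frac 12t}J(S_t(X_{n-1}),S_t(X_n)).
\end{equation}
Hence the task is reduced to controlling $J(S_t(X_{n-1}),S_t(X_n))$ in terms of $J(X_{n-1},X_n)$, and the claimed constant $\hat C_{M,\alpha}(t)=e^{\frac 12t}C_{M,\alpha}(t)$ will emerge precisely from combining this factor $e^{\frac 12t}$ with the Lipschitz constant $C_{M,\alpha}(t)=\tilde C_{M,\alpha}(t)e^{t\bar C_{M,\alpha}(t)}$ of Theorem \ref{theorem lipschitz lagrangian}.

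The central step is to prove
\begin{equation}
J(S_t(X_{n-1}),S_t(X_n)) \leq C_{M,\alpha}(t)J(X_{n-1},X_n).
\end{equation}
Writing $J(S_t(X_{n-1}),S_t(X_n)) = \inf_{f,g\in G}\big(\tilde d(S_t(X_{n-1})\bullet f, S_t(X_n)) + \tilde d(S_t(X_{n-1}), S_t(X_n)\bullet g)\big)$, I would invoke Proposition \ref{Semigroup commutes with relabeling} to pull the relabeling through the flow, $S_t(X_{n-1})\bullet f = S_t(X_{n-1}\bullet f)$ and $S_t(X_n)\bullet g = S_t(X_n\bullet g)$. Since relabeling preserves both membership in $\F_i^\alpha$ (the condition $V=H$ is invariant under $f$) and the energy bound $\|H\|_\infty\leq M$ (as $f$ is a surjective homeomorphism), one has $X_{n-1}\bullet f,\, X_n\bullet g\in\F_i^{\alpha,M}$, while $X_{n-1},X_n\in\F_{i,0}^{\alpha,M}$. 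Theorem \ref{theorem lipschitz lagrangian} then applies to each term once the arguments are ordered so that the $\F_{i,0}^{\alpha,M}$-datum occupies the second slot; using that $\tilde d$ is symmetric, this yields $\tilde d(S_t(X_{n-1}\bullet f), S_t(X_n)) \leq C_{M,\alpha}(t)\,\tilde d(X_{n-1}\bullet f, X_n)$ and $\tilde d(S_t(X_{n-1}), S_t(X_n\bullet g)) \leq C_{M,\alpha}(t)\,\tilde d(X_{n-1}, X_n\bullet g)$. Adding these and taking the infimum over $f,g\in G$ gives the displayed inequality.

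Combining the three estimates gives $J(\Pi S_t(X_{n-1}),\Pi S_t(X_n)) \leq e^{\frac 12t}C_{M,\alpha}(t)J(X_{n-1},X_n)$, and summing over $n=1,\dots,N$ followed by taking the infimum over all admissible sequences produces
\begin{equation}
d_M(t,X,\bar X) \leq e^{\frac 12t}C_{M,\alpha}(t)\,\inf\sum_{n=1}^N J(X_{n-1},X_n).
\end{equation}
Finally, since $S_0=\Id$ and every $X_n\in\F_{i,0}^{\alpha,M}\subseteq\F_0^\alpha$ satisfies $\Pi X_n = X_n\bullet\Id = X_n$, the remaining infimum is exactly $d_M(0,X,\bar X)$, whence $d_M(t,X,\bar X)\leq\hat C_{M,\alpha}(t)d_M(0,X,\bar X)$ with $\hat C_{M,\alpha}(t)=e^{\frac 12t}\tilde C_{M,\alpha}(t)e^{t\bar C_{M,\alpha}(t)}$.

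The main obstacle is the asymmetry built into Theorem \ref{theorem lipschitz lagrangian}, which permits only one of the two solutions to start outside $\F_{i,0}^{\alpha,M}$, whereas the definition of $J$ relabels \emph{both} arguments. The resolution, as above, is to exploit the commutation of $S_t$ with relabeling (Proposition \ref{Semigroup commutes with relabeling}) together with the symmetry of $\tilde d$, so that in each of the two terms defining $J$ the relabeled datum can be placed in the first slot and the genuine $\F_{i,0}^{\alpha,M}$-datum in the second; the only point requiring care is verifying that relabeling leaves the energy bound $M$ intact.
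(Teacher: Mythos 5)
Your proposal is correct and follows essentially the same route as the paper: reduce to a single $J$-term via Lemma \ref{lemma J relabeling f}, then control $J(S_t(\cdot),S_t(\cdot))$ by pulling relabelings through the flow with Proposition \ref{Semigroup commutes with relabeling} and applying Theorem \ref{theorem lipschitz lagrangian}, before summing and taking infima. The only cosmetic difference is that the paper fixes an $\epsilon$-optimal sequence of relabelings at time zero rather than taking infima at the end, and you make explicit two points the paper leaves implicit, namely that relabeling preserves $\F_i^{\alpha,M}$ and that the symmetry of $\tilde d$ resolves the asymmetry in the hypotheses of Theorem \ref{theorem lipschitz lagrangian}.
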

\begin{proof}
Let $1>\epsilon>0$ and $X,\bar{X}\in\F_{i,0}^{\alpha,M}$ be given and choose $\{X_n\}_{n=0}^N$ in $\F_{i,0}^{\alpha,M}$, $\{f_n\}_{n=1}^N$, and $\{g_n\}_{n=0}^{N-1}$ in $G$ such that $X_0=X, X_N=\bar{X}$ and $d_M(0,X,\bar{X})+\epsilon \geq \sum_{n=1}^N \tilde d(X_n\bullet f_n,X_{n-1})+\tilde{d}(X_n,X_{n-1}\bullet g_{n-1})$. Then from the definition of $d_M$ we have
\begin{align}
d_M\big(t,X,\bar{X}\big) &\leq \sum_{n=1}^N J\big(\Pi S_t(X_n),\Pi S_t(X_{n-1})\big) \nonumber\\ 
	&\leq e^{\frac 12t}\sum_{n=1}^N J\big(S_t(X_n), S_t(X_{n-1})\big),
\end{align}
by Lemma \ref{lemma J relabeling f}. From the definition of $J$ and Theorem \ref{theorem lipschitz lagrangian} we get 
\begin{align}
d_M\big(t,X,\bar{X}\big)	&\leq e^{\frac 12t}\sum_{n=1}^N J\big(S_t(X_n),S_t(X_{n-1})\big)\nonumber\\
	&\leq e^{\frac 12t}C_{M,\alpha}(t)\sum_{n=1}^N\big(\tilde d(X_n\bullet f_n,X_{n-1})
	+\tilde d(X_n,X_{n-1}\bullet g_{n-1})\big)\nonumber\\
	&\leq \hat C_{M,\alpha}(t)\big(d_M(0,X,\bar{X})+\epsilon\big).
\end{align}
The inequality holds for each $\epsilon$ in the range $(0,1)$, which implies that
\begin{equation}
d_M\left(\Pi S_t(X(t)),\Pi S_t(\bar{X}(t))\right) \leq \hat C_{M,\alpha}(t)d_M(0,X,\bar{X}).
\end{equation}
\end{proof}
Since $\F_{i,0}^{\alpha,M}$ is in one to one correspondance with $\D_0^{\alpha,M}$ the metric $d_M$ on $\F_{i,0}^{\alpha,M}$ induces a metric $d_{\D_M}$ on $\D_0^{\alpha,M}$.
\begin{definition}
Let $(u, \rho, \nu, \mu)$, $(\bar u, \bar\rho, \bar\nu, \bar \mu)\in \D_0^{\alpha,M}$, then define $d_{\D_0^{\alpha,M}}:[0,\infty)\times \D_0^{\alpha,M}\times \D_0^{\alpha,M}\to \R$ by
\begin{equation}
d_{\D_0^{\alpha,M}}(t, (u, \rho, \nu,\mu), (\bar u, \bar \rho, \bar \nu, \bar \mu))=d_M(t, L((u, \rho, \nu, \mu)), L((\bar u, \bar\rho, \bar \nu, \bar \mu)))
\end{equation}
for any $(u,\rho,\nu,\mu),(\bar{u},\bar{\rho},\bar{\nu},\bar\mu)\in\D_0^{\alpha, M}$ and $t\geq 0$.
\end{definition}
\begin{theorem}
The $\alpha$-dissipative solution operator $T_t$ is Lipschitz continuous in the sense that for any  $(u_0,\rho_0,\nu_0,\mu_0),(\bar{u}_0,\bar{\rho}_0,\bar{\nu}_0,\bar\mu_0)\in\D_0^{\alpha,M}$ the inequality
\begin{align}
&d_{\D_0^{\alpha,M}}\big(t,(u_0,\rho_0,\nu_0,\mu_0),(\bar{u}_0,\bar{\rho}_0,\bar{\nu}_0,\bar\mu_0)\big)\nonumber\\
&\qquad \qquad\leq \hat C_{M,\alpha}(t)d_{\D_0^{\alpha,M}}\big(0,(u_0,\rho_0,\nu_0,\mu_0),(\bar{u}_0,\bar{\rho}_0,\bar{\nu}_0,\bar\mu_0)\big)
\end{align}
holds. The Lipschitz constant $\hat C_{M,\alpha}(t)$ is given inTheorem \ref{lipschitz_theorem}.
\end{theorem}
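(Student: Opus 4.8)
The plan is to reduce the asserted estimate on $\D_0^{\alpha,M}$ directly to the Lagrangian Lipschitz estimate of Theorem \ref{lipschitz_theorem}, exploiting the fact that the Eulerian metric $d_{\D_0^{\alpha,M}}$ was \emph{defined} through the mapping $L$ and the Lagrangian metric $d_M$. Writing $P_0 = (u_0,\rho_0,\nu_0,\mu_0)$ and $\bar P_0 = (\bar u_0,\bar\rho_0,\bar\nu_0,\bar\mu_0)$, the first step is simply to unfold the definition on both sides of the claimed inequality,
\[
d_{\D_0^{\alpha,M}}(t, P_0, \bar P_0) = d_M\big(t, L(P_0), L(\bar P_0)\big), \qquad d_{\D_0^{\alpha,M}}(0, P_0, \bar P_0) = d_M\big(0, L(P_0), L(\bar P_0)\big),
\]
so that the statement becomes a claim purely about $d_M$ evaluated at the Lagrangian representatives $X := L(P_0)$ and $\bar X := L(\bar P_0)$.

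Next I would verify that $X$ and $\bar X$ lie in $\F_{i,0}^{\alpha,M}$, which is exactly the hypothesis of Theorem \ref{lipschitz_theorem}. Since $P_0, \bar P_0 \in \D_0^{\alpha,M} = \D_0^\alpha \cap \D^{\alpha,M}$, the energy bound $\nu_0(\R)\leq M$ is preserved by $L$ because $L(\D^{\alpha,M}) = \F_0^{\alpha,M}$, while the admissibility condition $\nu_0=\mu_0$ forces $\frac{d\mu_0}{d\nu_0}=1$ and hence $V=H$, placing $L(P_0)$ in $\F_i^\alpha$. Combining these two facts puts $X,\bar X$ in $\F_{i,0}^{\alpha,M}=\F_0^{\alpha,M}\cap\F_i^\alpha$, the set on which $d_M$ is a genuine metric (by Lemma \ref{lower_bound_d_lemma}, Lemma \ref{lemma:r bar r}, and the subsequent lemma on $V_1=\bar V_1$) and on which $L$ is a bijection with inverse $M$ by Proposition \ref{prop: M and L}.

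Applying Theorem \ref{lipschitz_theorem} to the pair $X,\bar X$ then yields
\[
d_M\big(t, X, \bar X\big) \leq \hat C_{M,\alpha}(t)\, d_M\big(0, X, \bar X\big),
\]
with the stated constant $\hat C_{M,\alpha}(t) = e^{\frac12 t}\tilde C_{M,\alpha}(t)e^{t\bar C_{M,\alpha}(t)}$. Substituting the two identities from the first step converts this into precisely the desired inequality for $d_{\D_0^{\alpha,M}}$, completing the argument.

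Because all the analytic content is already contained in Theorem \ref{lipschitz_theorem}, which in turn rests on the Gronwall argument of Theorem \ref{theorem lipschitz lagrangian} and the relabeling bound of Lemma \ref{lemma J relabeling f}, there is no genuine obstacle remaining at this stage. The only point requiring a moment's care is confirming that $L$ sends $\D_0^{\alpha,M}$ into $\F_{i,0}^{\alpha,M}$ rather than merely into $\F_{i,0}^\alpha$, so that the energy-dependent Lipschitz constant applies verbatim; this is guaranteed by the energy-respecting identity $L(\D^{\alpha,M}) = \F_0^{\alpha,M}$ recorded just after the definition of $\F^{\alpha,M}$.
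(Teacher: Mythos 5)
Your proposal is correct and follows essentially the same route as the paper, which simply cites Theorem \ref{existence theorem} and Theorem \ref{lipschitz_theorem}; you merely spell out the reduction (unfolding the definition of $d_{\D_0^{\alpha,M}}$ via $L$ and checking that $L$ lands in $\F_{i,0}^{\alpha,M}$), which is exactly the intended argument.
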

\begin{proof}
The theorem follows from Theorem \ref{existence theorem} and Theorem \ref{lipschitz_theorem}.
\end{proof}

\appendix
\section{Examples}\label{section examples}
\begin{example}
\label{example alpha dissipative}
In this example we construct an $\alpha$-dissipative solution. Let the initial data $(u_0,\rho_0,\nu_0,\mu_0)\in\D_0^\alpha$ be given by
\begin{align}
u_0(x) &= \begin{cases} 0, &x\leq -1,\\ x+1, &-1\leq x\leq 0,\\ -x+1,&0\leq x\leq 1,\\ 0,&1\leq x,\end{cases}\\
\rho_0(x) &= 0,\\
\nu_0 &= \mu_{0,ac} = u_{0,x}^2(x)\:d x = \mu_0,
\end{align}
and $\alpha(x) = \frac{x^2}{x^2+1}$. Then the initial data in Lagrangian coordinates is given by $(y_0,U_0,H_0,r_0,V_0) = L((u_0,\rho_0,\nu_0,\mu_0))$ where
\begin{align}
y_0(\xi) &= \begin{cases} \xi, &\xi\leq -1,\\ \frac 12(\xi-1), &-1\leq\xi\leq 3,\\ \xi-2, &3\leq\xi,\end{cases}\\
H_0(\xi) &= \begin{cases} 0, &\xi\leq -1,\\ \frac 12(\xi+1), &-1\leq\xi\leq 3,\\ 2, &3\leq\xi,\end{cases}\\
U_0(\xi) &= \begin{cases} 0, &\xi\leq -1,\\ \frac 12(\xi+1), &-1\leq\xi\leq 1,\\ \frac 12(-\xi+3), &1\leq\xi\leq 3,\\ 0, &3\leq\xi,\end{cases}\\
r_0(\xi) &= 0,\\
V_0(\xi) &= H_0(\xi).\\
\end{align}
Moreover the wave breaking time $\tau$ as a function of $\xi$ is given by
\begin{equation}
\tau(\xi) = \begin{cases}\infty, &\xi<1,\\ 2, &1<\xi<3,\\ \infty, & 3<\xi.\end{cases}
\end{equation}
Then for $t<2$ the solution is given by
\begin{align}
y(\xi,t) &=
	\begin{cases}
	\xi-\frac 14t^2, &\xi\leq -1,\\
	\frac 12(\xi-1)+\frac12(\xi+1)t+\frac 18(\xi-1)t^2, &-1\leq\xi\leq 1,\\
	\frac 12(\xi-1)-\frac 12(\xi-3)t+\frac 18(\xi-1)t^2, &1\leq\xi\leq 3,\\
	\xi-2+\frac 14t^2, &3\leq\xi,
	\end{cases}\\
U(\xi,t) &=
	\begin{cases}
	-\frac 12 t, &\xi\leq-1,\\
	\frac 12(\xi+1)+\frac 14(\xi-1)t, &-1\leq\xi\leq 1,\\
	-\frac 12(\xi-1)+\frac 14(\xi-1)t, &1\leq\xi\leq 3,\\
	\frac 12 t, &3\leq\xi,
	\end{cases}\\
H(\xi,t) &= H_0(\xi),\\
r(\xi,t) &= 0,\\
V(\xi,t) &= H_0(\xi).
\end{align}
In Eulerian coordinates the solution for $t<2$ is given by
\begin{align}
u(x,t) &=
	\begin{cases}
	-\frac 12 t, & x\leq -\frac 14t^2-1,\\
	\frac{x-\frac 12t+1}{1+\frac 12t}, & -\frac 14t^2-1\leq x\leq t,\\
	\frac{-x+\frac 12 t+1}{1-\frac 12t}, & t\leq x\leq \frac 14t^2+1,\\
	\frac 12 t, &\frac 14t^2+1\leq x,
	\end{cases}\\
d\nu(t) &= d\mu(t) = u_x^2(t)\:d x.
\end{align}
As $t\rightarrow 2$ we note that $y(\xi,t)\rightarrow 2$ for $\xi\in(1,3)$. Hence all wave breaking takes place at the coordinates $(x=2,t=2)$. We have
\begin{equation}
\alpha(2) = \frac 45.
\end{equation}
At $t=2$ the solution in Lagrangian coordinates reads
\begin{align}
y(\xi,2) &=
	\begin{cases}
	\xi-1, &\xi\leq -1,\\
	2\xi, &-1\leq\xi\leq 1,\\
	2, &1\leq\xi\leq 3,\\
	\xi-1, &3\leq\xi,
	\end{cases}\\
U(\xi,2) &=
	\begin{cases}
	-1, &\xi\leq-1,\\
	\xi, &-1\leq\xi\leq 1,\\
	1, &1\leq\xi\leq 3,\\
	1, &3\leq\xi,
	\end{cases}\\
H(\xi,2) &= H_0(\xi),\\
r(\xi,2) &= 0,\\
V(\xi,2) &=
	\begin{cases}
	0, &\xi\leq -1,\\
	\frac 12(\xi+1), &-1\leq\xi\leq 1,\\
	\frac{1}{10}(\xi+9), &1\leq\xi\leq 3,\\
	\frac 65, &3\leq\xi,
	\end{cases}
\end{align}
and thus the solution in Lagrangian coordinates for $t\geq 2$ is given by
\begin{align}
y(\xi,t) &=
	\begin{cases}
	-\frac{3}{20}t^2-\frac{2}{5}t+\xi+\frac{2}{5}, &\xi\leq -1,\\
	\frac 12(\xi-\frac 15)+\frac 12(\xi+\frac 15)t+\frac 18(\xi-\frac 15)t^2, &-1\leq\xi\leq 1,\\
	\frac{1}{10}(\xi+3)-\frac{1}{10}(\xi-7)t+\frac{1}{40}(\xi+3)t^2, &1\leq\xi\leq 3,\\
	\frac{3}{20}t^2+\frac 25t+\xi-\frac{12}{5}, &3\leq\xi,
	\end{cases}\\
U(\xi,t) &=
	\begin{cases}
	-\frac{3}{10}t-\frac 25, &\xi\leq -1,\\
	\frac 12(\xi+\frac 15)+\frac 14(\xi-\frac 15)t, &-1\leq\xi\leq 1,\\
	-\frac{1}{10}(\xi-7)+\frac{1}{20}(\xi+3)t, & 1\leq\xi\leq 3,\\
	\frac{3}{10}t+\frac 25, & 3\leq\xi,
	\end{cases}\\
H(\xi,t) &= H(\xi,0),\\
r(\xi,t) &= 0,\\
V(\xi,t) &=
	\begin{cases}
	0, &\xi\leq -1,\\
	\frac 12(\xi+1), &-1\leq\xi\leq 1,\\
	\frac{1}{10}(\xi+9), &1\leq\xi\leq 3,\\
	\frac 65, &3\leq\xi.
	\end{cases}
\end{align}
The solution in Eulerian coordinates for $t\geq 2$ is then given by
\begin{align}
u(x,t) &=
	\begin{cases}
	-\frac{3}{10}t-\frac{2}{5}, & x\leq-\frac{3}{20}t^2-\frac 25t-\frac 35,\\
	\frac{x+\frac 15(1-\frac 12t)}{1+\frac 12t}, & -\frac{3}{20}t^2-\frac 25t-\frac 35\leq x\leq \frac{1}{10}t^2+\frac 35 t+ \frac 25,\\
	\frac{x-\frac 12t - 1}{\frac 12t-1}, & \frac{1}{10}t^2+\frac 35 t+ \frac 25\leq x\leq \frac{3}{20}t^2+\frac 25 t+\frac 35,\\
	\frac{3}{10}t+\frac 25, & \frac{3}{20}t^2+\frac 25 t+\frac 35\leq x,
	\end{cases}\\
\rho(x,t) &= 0,\\
d\nu(t) &= u_x(x,t)^2d x \nonumber\\ &\quad + \delta_{(x=2,t=2)} + \frac{4}{(\frac 12t-1)^2}\mathbf{1}_{[\frac{1}{10}t^2+\frac 35t + \frac 25, \frac{3}{20}t^2+\frac 25t+\frac 35]}d x\\
d\mu(t) &= u_x(x,t)^2\:d x + \frac 15\delta_{(x=2,t=2)}.
\end{align}
In Figure \ref{figure example adiss} curves of $y(\xi,t)$ are drawn for $\xi=-1,1,3$. The cumulative energy function for the measures $\mu$ and $\nu$ are plotted for $t=0,2,4$ in Figure \ref{figure energy example adiss}.
\begin{figure}
\includegraphics[scale=0.8 ]{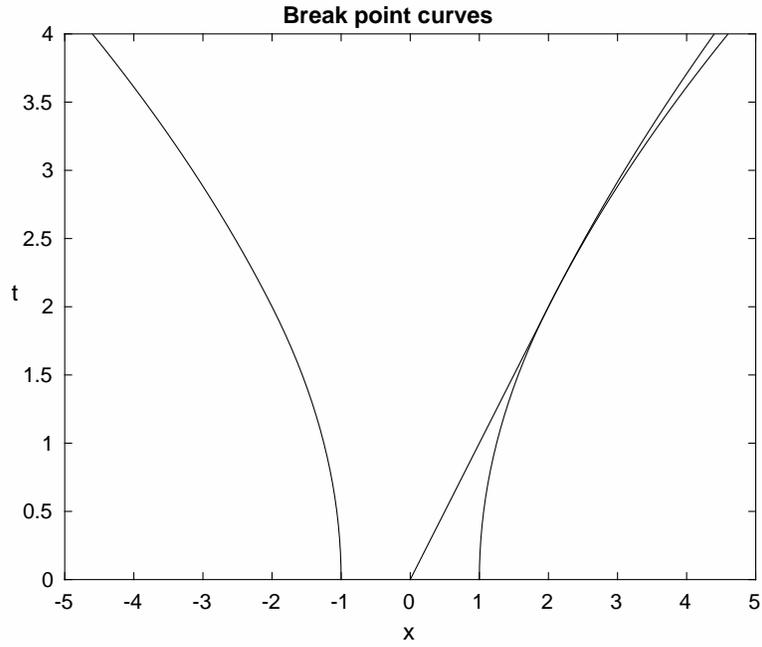}
\caption{Plots of $y(\xi,t)$ in Example \ref{example alpha dissipative} for $\xi=-1,1,3$.}
\label{figure example adiss}
\end{figure}
\begin{figure}
\includegraphics[scale=0.8 ]{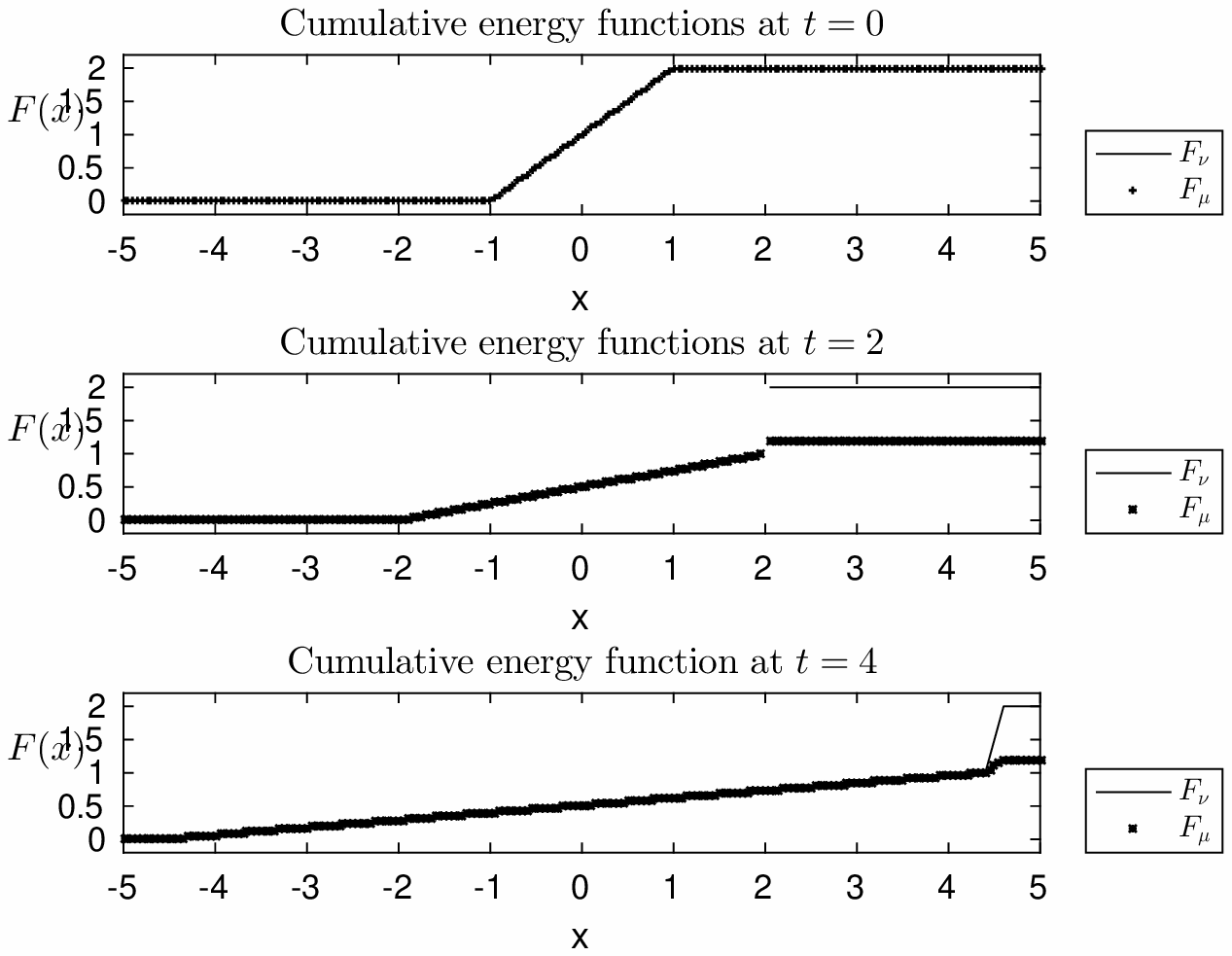}
\caption{Plots of the cumulative energy functions in Example \ref{example alpha dissipative} for $t=0,2,4$.}
\label{figure energy example adiss}
\end{figure}
\end{example}

\begin{example}
\label{example why g}
In this example we show that the function $g$ in the construction of the metric is necessary. Let $\alpha\in [0,1]$ be a given constant, choose $\epsilon \in (0,1)$, and let $(u_0,\rho_0,\nu_0,\mu_0)$ and $(\bar u_0,\bar \rho_0,\bar\nu_0,\bar \mu_0)$ be given by
\begin{align}
u_0(x) &=
	\begin{cases}
	0, &x\leq -1,\\
	x+1, &-1\leq x\leq 0,\\
	1-x,& 0\leq x\leq\epsilon,\\
	1-\epsilon,&\epsilon\leq x,
	\end{cases}\\
\bar u_0(x) &=
	\begin{cases}
	-2\epsilon,&x\leq -1,\\
	x+1-2\epsilon,&-1\leq x\leq\epsilon,\\
	1-\epsilon,&\epsilon\leq x,
	\end{cases}\\
\rho_0 &= \bar\rho_0 = 0,\\
\nu_0 &=\bar\nu_0 = u_{0,x}^2\:d x=\bar \mu_0=\mu_0.
\end{align}
Then $X_0 = L((u_0,\rho_0,\nu_0,\mu_0))$ and $\bar{X}_0 = L((\bar u_0,\bar\rho_0,\bar\nu_0,\bar\mu_0))$ are given by
\begin{align}
y_0(\xi) = \bar{y}_0(\xi) &=
	\begin{cases}
	\xi, &\xi\leq -1,\\
	\frac 12(\xi-1), &-1\leq\xi\leq 1+2\epsilon,\\
	\xi-1-\epsilon, & 1+2\epsilon\leq\xi,
	\end{cases}\\
U_0(\xi) &=
	\begin{cases}
	0, &\xi\leq-1,\\
	\frac 12(\xi+1),&-1\leq\xi\leq 1,\\
	-\frac 12(\xi-3), & 1\leq\xi\leq 1+2\epsilon,\\
	1-\epsilon, & 1+2\epsilon\leq\xi,
	\end{cases}\\
\bar{U}_0(\xi) &=
	\begin{cases}
	-2\epsilon, &\xi\leq-1,\\
	\frac 12(\xi+1-4\epsilon),&-1\leq\xi\leq 1+2\epsilon,\\
	1-\epsilon, & 1+2\epsilon\leq\xi,
	\end{cases}\\
H_0(\xi) = \bar{H}_0(\xi) &= 
	\begin{cases}
	0, &\xi\leq -1,\\
	\frac 12(\xi+1), &-1\leq\xi\leq 1+2\epsilon,\\
	1+\epsilon, & 1+2\epsilon\leq\xi,
	\end{cases}\\
r_0(\xi) = \bar{r}_0(\xi) &= 0.
\end{align} 
Let $X(t)=S_t(X_0)$ and $\bar{X}(t) = S_t(\bar{X}_0)$, then for $0\leq t < 2$ we have
\begin{align}
y(\xi,t) &= 
	\begin{cases}
	\xi-\frac 18(1+\epsilon)t^2, &\xi\leq -1,\\
	\frac 12(\xi-1)+\frac 12(\xi+1)t + \frac 18(\xi-\epsilon)t^2, &-1\leq\xi\leq 1,\\
	\frac 12(\xi-1)-\frac 12(\xi-3)t + \frac 18(\xi-\epsilon)t^2, & 1\leq\xi\leq 1+2\epsilon,\\
	\xi-1-\epsilon+(1-\epsilon)t + \frac 18(1+\epsilon)t^2, & 1+2\epsilon\leq\xi,
	\end{cases}\\
\bar{y}(\xi,t) &=
	\begin{cases}
	\xi-2\epsilon t-\frac 18(1+\epsilon)t^2, &\xi\leq -1,\\
	\frac 12(\xi-1)+\frac 12(\xi+1-4\epsilon)t + \frac 18(\xi-\epsilon)t^2, &-1\leq\xi\leq 1+2\epsilon,\\
	\xi-1-\epsilon + (1-\epsilon)t + \frac 18(1+\epsilon)t^2, & 1+2\epsilon\leq\xi,
	\end{cases}\\
U(\xi,t) &=
	\begin{cases}
	-\frac 14(1+\epsilon)t, &\xi\leq-1,\\
	\frac 12(\xi+1)+\frac 14(\xi-\epsilon)t,&-1\leq\xi\leq 1,\\
	-\frac 12(\xi-3)+\frac 14(\xi-\epsilon)t, & 1\leq\xi\leq 1+2\epsilon,\\
	1-\epsilon + \frac 14(1+\epsilon)t, & 1+2\epsilon\leq\xi,
	\end{cases}\\
\bar{U}(\xi,t) &=
	\begin{cases}
	-2\epsilon-\frac 14(1+\epsilon)t, &\xi\leq-1,\\
	\frac 12(\xi+1)-2\epsilon+\frac 14(\xi-\epsilon)t,&-1\leq\xi\leq 1+2\epsilon,\\
	1-\epsilon+\frac 14(1+\epsilon)t, & 1+2\epsilon\leq\xi.
	\end{cases}
\end{align}
Then $M(X(t))$, and $M(\bar X(t))$, the solutions in Eulerian coordinates, are given by
\begin{align}
x_1(t) &= -1-\frac 18(1+\epsilon)t^2,\\
x_2(t) &= t+\frac 18(1-\epsilon)t^2,\\
x_3(t) &= \epsilon +(1-\epsilon)t + \frac 18(1+\epsilon)t^2,\\
u(x,t) &=
	\begin{cases}
	-\frac 14(1+\epsilon)t, & x\leq x_1(t),\\
	\frac{1}{1+\frac 12t}\left(x+1-\frac 14(1+\epsilon)t\right),& x_1(t)\leq x\leq x_2(t),\\
	\frac{1}{1-\frac 12t}\left(1-x+\frac 14(3-\epsilon)t\right),& x_2(t)\leq x\leq x_3(t),\\
	1-\epsilon+\frac 14(1+\epsilon)t,& x_3(t)\leq x,
	\end{cases}\\
\rho(x,t) &= 0,\\
d\nu(t) &= \frac{1}{1+t+\frac 14t^2}\mathbf 1_{[x_1(t),x_2(t)]}\:d x + \frac{1}{1-t+\frac 14t^2}\mathbf{1}_{[x_2(t),x_3(t)]}\:d x,\\
\mu(t) &= \nu(t),\\
\bar x_1(t) &= -1-2\epsilon t -\frac 18(1+\epsilon)t^2,\\
\bar x_2(t) &= \epsilon +(1-\epsilon)t + \frac 18(1+\epsilon)t^2,\\
\bar u(x,t) &=
	\begin{cases}
	-2\epsilon-\frac 14(1+\epsilon)t, & x\leq \bar x_1(t),\\
	\frac{1}{1+\frac 12t}\left(x+1-2\epsilon -\frac 14t(1-3\epsilon)\right), & \bar x_1(t)\leq x\leq \bar x_2(t),\\ 
	1-\epsilon+\frac 14(1+\epsilon)t, & \bar x_2(t)\leq x,\\
	\end{cases}\\
\bar\rho(x,t) &= 0,\\
d\bar\nu(t) &=  \frac{1}{1+t+\frac 14t^2}\mathbf{1}_{[\bar x_1(t),\bar x_2(t)]}\:d x,\\
\bar\mu(t) &= \bar \nu(t),
\end{align}
For $2\leq t$ the solutions in Lagrangian coordinates are given by
\begin{align}
y(\xi,t) &= 
	\begin{cases}
	\xi+\frac{\alpha\epsilon}{2}-\frac{\alpha\epsilon}{2}t -\frac 18(1+(1-\alpha)\epsilon)t^2, &\xi\leq -1,\\
	\frac 12(\xi-1)+\frac{\alpha\epsilon}{2}+\frac 12(\xi+1-\alpha\epsilon)t+\frac 18(\xi-(1-\alpha)\epsilon)t^2, &-1\leq\xi\leq 1,\\
	\frac{1-\alpha}{2}(\xi-1)+\frac{\alpha\epsilon}{2} +(-\frac{1-\alpha}{2}(\xi-1)+1-\frac{\alpha\epsilon}{2})t\\ \qquad+ \frac 18((1-\alpha)(\xi-1-\epsilon)+1)t^2, & 1\leq\xi\leq 1+2\epsilon,\\
	\xi-1-\epsilon-\frac{\alpha\epsilon}{2}+(1-\epsilon+\frac{\alpha\epsilon}{2})t + \frac 18(1+(1-\alpha)\epsilon)t^2, & 1+2\epsilon\leq\xi,
	\end{cases}\\
\bar{y}(\xi,t) &=
	\begin{cases}
	\xi-2\epsilon t-\frac 18(1+\epsilon)t^2, &\xi\leq -1,\\
	\frac 12(\xi-1)+\frac 12(\xi+1-4\epsilon)t + \frac 18(\xi-\epsilon)t^2, &-1\leq\xi\leq 1+2\epsilon,\\
	\xi-1-\epsilon + (1-\epsilon)t + \frac 18(1+\epsilon)t^2, & 1+2\epsilon\leq\xi,
	\end{cases}\\
U(\xi,t) &=
	\begin{cases}
	-\frac{\alpha\epsilon}{2} -\frac 14(1+(1-\alpha)\epsilon)t, &\xi\leq-1,\\
	\frac 12(\xi+1-\alpha\epsilon)+\frac 14(\xi-(1-\alpha)\epsilon)t,&-1\leq\xi\leq 1,\\
	-\frac{1-\alpha}{2}(\xi-1)+1-\frac{\alpha\epsilon}{2} + \frac 14((1-\alpha)(\xi-1-\epsilon)+1)t, & 1\leq\xi\leq 1+2\epsilon,\\
	1-\epsilon+\frac{\alpha\epsilon}{2} + \frac 14(1+(1-\alpha)\epsilon)t, & 1+2\epsilon\leq\xi,
	\end{cases}\\
\bar{U}(\xi,t) &=
	\begin{cases}
	-2\epsilon-\frac 14(1+\epsilon)t, &\xi\leq-1,\\
	\frac 12(\xi+1)-2\epsilon+\frac 14(\xi-\epsilon)t,&-1\leq\xi\leq 1+2\epsilon,\\
	1-\epsilon+\frac 14(1+\epsilon)t, & 1+2\epsilon\leq\xi,
	\end{cases}\\
V(\xi,t) &=
	\begin{cases}
	0, &\xi\leq -1,\\
	\frac 12(\xi+1), &-1\leq\xi\leq 1,\\
	\frac{1-\alpha}{2}\xi+\frac{1+\alpha}{2}, &1\leq\xi\leq 1+2\epsilon,\\
	1+(1-\alpha)\epsilon, & 1+2\epsilon\leq\xi,
	\end{cases}\\
\bar V(\xi,t) &= \bar H_0(\xi),\\
H(\xi,t) &= H_0(\xi),\\
\bar H(\xi,t) &= \bar H_0(\xi). 
\end{align}
The solutions in Eulerian coordinates for $2\leq t$ are given by
\begin{align}
\label{eq:ex g eulerian}
x_1(t) &= -1+\frac{\alpha\epsilon}{2}-\frac{\alpha\epsilon}{2}t -\frac 18(1+(1-\alpha)\epsilon)t^2,\\
x_2(t) &= \frac{\alpha\epsilon}{2} + (1-\frac{\alpha\epsilon}{2})t+\frac 18(1-(1-\alpha)\epsilon)t^2,\\
x_3(t) &= \epsilon-\frac{\alpha\epsilon}{2}+(1-\epsilon+\frac{\alpha\epsilon}{2})t + \frac 18(1+(1-\alpha)\epsilon)t^2,\\
u(x,t) &=
	\begin{cases}
	-\frac{\alpha\epsilon}{2}-\frac 14(1+(1-\alpha)\epsilon)t, & x\leq x_1(t),\\
	\frac{x-x_1(t)}{(1+\frac 12t)^2}-\frac{\alpha\epsilon}{2}-\frac 14(1+(1-\alpha)\epsilon)t, & x_1(t)\leq x\leq x_2(t),\\
	-\frac{1}{1-\frac 12t}\left(x-2-\frac 12(1-\epsilon)t+\frac 18(1-\epsilon)t^2\right),& x_2(t)\leq x\leq x_3(t),\\
	1-\epsilon+\frac{\alpha\epsilon}{2} + \frac 14(1+(1-\alpha)\epsilon)t, & x_3(t) \leq x,\\ 
		\end{cases}\\
\rho(x,t) &= 0,\\
d\nu(t) &= \frac{1}{1+t+\frac 14t^2}\mathbf 1_{[x_1(t),x_2(t)]}\:d x +\frac{1}{(1-\alpha)(1-t+\frac 14t^2)}\mathbf{1}_{[x_2(t),x_3(t)]}\:d x \nonumber\\ &\quad + \epsilon\delta_{(x=2+\frac{1-\epsilon}{2},t=2)},\\
d\mu(t) &= \frac{1}{1+t+\frac 14t^2}\mathbf 1_{[x_1(t),x_2(t)]}\:d x +\frac{1}{(1-t+\frac 14t^2)}\mathbf{1}_{[x_2(t),x_3(t)]}\:d x \nonumber\\ &\quad + (1-\alpha)\epsilon\delta_{(x=2+\frac{1-\epsilon}{2},t=2)},\\
\bar x_1(t) &= -1-2\epsilon t-\frac 18(1+\epsilon)t^2,\\
\bar x_2(t) &= \epsilon +(1-\epsilon)t + \frac 18(1+\epsilon)t^2,\\
\bar u(x,t) &=
	\begin{cases}
	-2\epsilon-\frac 14(1+\epsilon)t, & x\leq \bar x_1(t),\\
	\frac{1}{1+\frac 12t}\left(x+1-2\epsilon -\frac 14t(1-3\epsilon)\right),& \bar x_1(t)\leq x\leq \bar x_2(t),\\
	1-\epsilon+\frac 14(1+\epsilon)t, &\bar x_2(t)\leq x,\\
		\end{cases}\\
\bar\rho(x,t) &= 0,\\
d\bar\nu(t) &= \frac{1}{1+t+\frac 14t^2}\mathbf{1}_{[\bar x_1(t),\bar x_2(t)]}\:d x,\\
\bar\mu(t) &= \bar\nu(t),
\end{align}
The curves tracking the break points, $x_1$, $x_2$, $x_3$, $\bar x_1$, and  $\bar x_2$, is drawn in Figure \ref{figure example why g}. In Figure \ref{figure energy example why g} the cumulative function of the measures $\mu$ and $\bar\mu$ are plotted for $t=0,2,4$. Note that they do not coincide after wave breaking.

For $t<2$ the norms of $X(t)-\bar{X}(t)$ are given by
\begin{align}
\|y(t)-\bar{y}(t)\|_{\infty} &= 2\epsilon t,\\
\|U(t)-\bar{U}(t)\|_{\infty} &= 2\epsilon,\\
\|y_{\xi}(t)-\bar{y}_{\xi}(t)\|_2 &= \sqrt{2\epsilon}t,\\
\|U_{\xi}(t)-\bar{U}_{\xi}(t)\|_2 &= \sqrt{2\epsilon},\\
\|V_\xi(t)-\bar V_\xi(t)\|_2 &= 0,\\
\|g(X(t))-g(\bar{X})\|_2 &= \sqrt{2\epsilon}\left(t+\frac{\alpha}{2}\right),\\
\|g_2(X(t))-g_2(\bar X(t))\|_2 &= 0,\\
\|g_3(X(t))-g_3(\bar X(t))\|_2 &= 0,\\
\|\alpha'\|_\infty\|U(t)H_{0,\xi}-\bar U(t)\bar H_{0,\xi}\|_2 &= 0,
\end{align}
while for $2\leq t$ we have
\begin{align}
\|y(t)-\bar{y}(t)\|_{\infty} &= \epsilon\left(2t + \frac{\alpha}{8}(t-2)^2\right),\\
\|U(t)-\bar{U}(t)\|_{\infty} &= 2\epsilon + \frac{\alpha\epsilon}{4}(t-2),\\
\|y_{\xi}(t)-\bar{y}_{\xi}(t)\|_2 &= \sqrt{2\epsilon}\left(t+\frac{\alpha}{8}(t-2)^2\right),\\
\|U_{\xi}(t)-\bar{U}_{\xi}(t)\|_2 &= \sqrt{2\epsilon}\left(1+\frac{\alpha}{4}(t-2)\right),\\
\|V_\xi(t)-\bar V_\xi(t)\|_2 &= \sqrt{2\epsilon}\frac{\alpha}{2},\\
\|g(X)-g(\bar{X})\|_2 &= \sqrt{2\epsilon}\left(\frac{\alpha}{2} + t + \frac{\alpha}{8}(t-2)^2\right),\\
\|g_2(X(t))-g_2(\bar X(t))\|_2 &= 0,\\
\|g_3(X(t))-g_3(\bar X(t))\|_2 &= 0,\\
\|\alpha'\|_\infty\|U(t)H_{0,\xi}-\bar U(t)\bar H_{0,\xi}\|_2 &= 0.
\end{align}
That is, at $t=2$ the norm $\|U_{\xi}(t)-\bar{U}_{\xi}(t)\|_2$ suddenly starts to grow, and the growth depends on $\alpha$. Moreover, there is a jump in $\|V_\xi(t)-\bar V_\xi(t)\|_2$ at $t=2$, so that term cannot be a part of the metric. Note that since $H_0 = \bar H_0$ all terms involving differences in $H$ or $H_\xi$ vanish.
\begin{figure}
\includegraphics[scale=0.8  ]{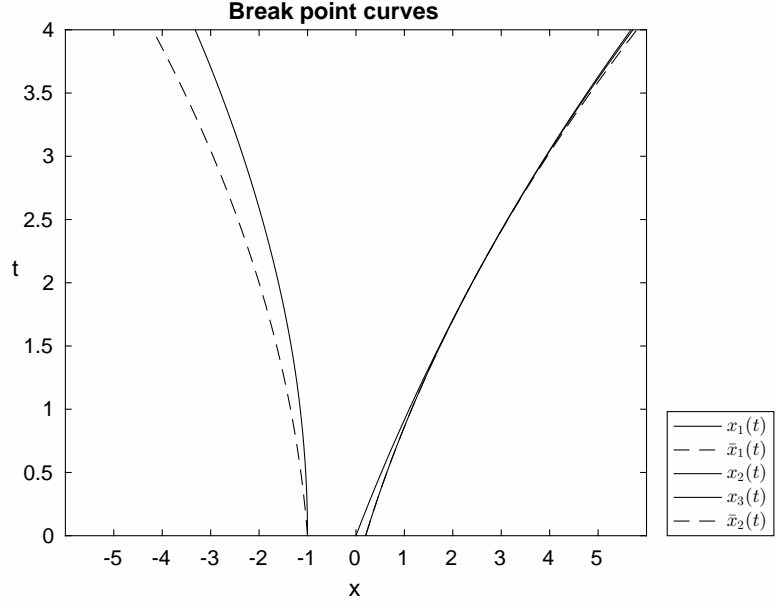}
\caption{A plot of characteristic curves $x_i(\xi,t), i=1,2,3$ and $\bar x_i(t), t=1,2$ from \eqref{eq:ex g eulerian} in Example \ref{example why g}.}
\label{figure example why g}
\end{figure}

\begin{figure}
\includegraphics[scale=0.8  ]{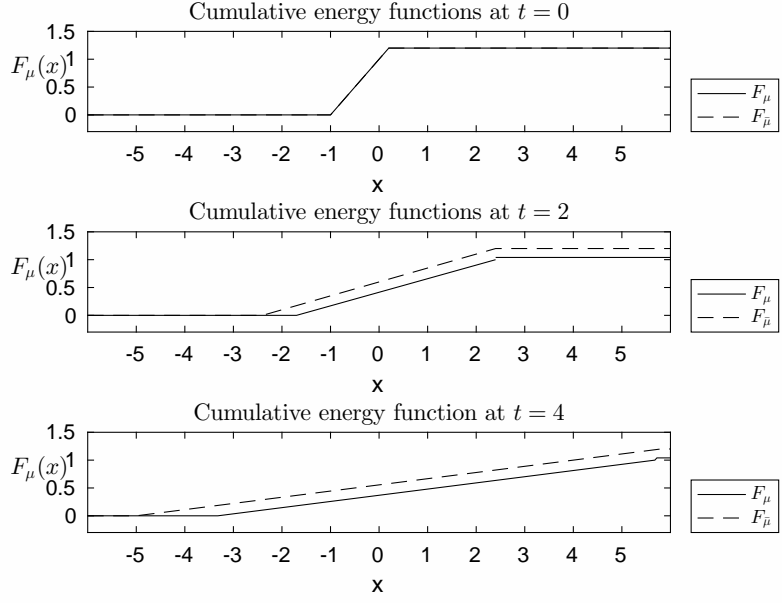}
\caption{Plots of the cumulative energy functions in Example \ref{example why g} for $t=0,2,4$ .}
\label{figure energy example why g}
\end{figure}
\end{example}

\begin{example}
\label{example iteration}
In this example we demonstrate the iteration scheme from the proof of Lemma \ref{solution_operator_lagrangian}. In particular we illustrate how the scheme reaches the limit in a finite number of iterations for multipeakon initial data. Let $X_0\in\F_0$ be given by
\begin{align}
y_0(\xi) &=
	\begin{cases}
	\xi, &\xi\leq -1,\\
	\frac 12(\xi-1), &-1\leq\xi\leq 1,\\
	\frac 45(\xi-1), &1\leq\xi\leq \frac 72,\\
	\xi-\frac 32, &\frac 72\leq\xi,
	\end{cases}\\
U_0(\xi) &=
	\begin{cases}
	1, &\xi\leq -1,\\
	-\frac 12(\xi-1), &-1\leq\xi\leq 1,\\
	-\frac 25(\xi-1), &1\leq\xi\leq \frac 72,\\
	-1, &\frac 72\leq\xi,
	\end{cases}\\
H_0(\xi) &=
	\begin{cases}
	0, &\xi\leq -1,\\
	\frac 12(\xi+1), &-1\leq\xi\leq 1,\\
	\frac 15(\xi+4), &1\leq\xi\leq \frac 72,\\
	\frac 32, &\frac 72\leq\xi,
	\end{cases}\\
V_0 &= H_0,\\
r_0 &= 0,
\end{align}
which gives
\begin{equation}
\tau(\xi) =
	\begin{cases}
	\infty,&\xi\leq -1,\\
	2, &-1\leq\xi\leq 1,\\
	4, &1\leq\xi\leq\frac 72,\\
	\infty, &\frac 72\leq\xi.
	\end{cases}
\end{equation}
Choose $\alpha(x)$ such that
\begin{equation}
\alpha(x) = 
	\begin{cases}
	0, &x\leq 0,\\
	\frac 14x, &0\leq x\leq 3,\\
	\frac 34, & 3\leq x.
	\end{cases}
\end{equation}
Then, in the notation of the proof of Lemma \ref{solution_operator_lagrangian}, we have $X_1(t) = X_0$. For $t<2$ the next term is given by
\begin{align}
y_2(\xi,t) &=
	\begin{cases}
	\xi+t-\frac{3}{16}t^2, &\xi\leq -1,\\
	\frac 12(\xi-1)-\frac 12(\xi-1)t+\frac 18(\xi-\frac 12)t^2, &-1\leq\xi\leq 1,\\
	\frac 45(\xi-1)-\frac 25(\xi-1)t+\frac 1{20}(\xi+\frac 14)t^2, &1\leq\xi\leq \frac 72,\\
	\xi-\frac 32-t+\frac 3{16}t^2, &\frac 72\leq\xi,
	\end{cases}\\
U_2(\xi,t) &=
	\begin{cases}
	1-\frac 38t, &\xi\leq -1,\\
	-\frac 12(\xi-1)+\frac 14(\xi-\frac 12)t, &-1\leq\xi\leq 1,\\
	-\frac 25(\xi-1)+\frac 1{10}(\xi+\frac 14)t, &1\leq\xi\leq \frac 72,\\
	-1+\frac 38t, &\frac 72\leq\xi,
	\end{cases}\\
H_2(\xi,t) &=
	\begin{cases}
	0, &\xi\leq -1,\\
	\frac 12(\xi+1), &-1\leq\xi\leq 1,\\
	\frac 15(\xi+4), &1\leq\xi\leq \frac 72,\\
	\frac 32, &\frac 72\leq\xi,
	\end{cases}\\
V_2(t) &= H_0,\\
r_2(t) &= 0.
\end{align}
The amount of energy dissipated at $t=2$ is given by
\begin{equation}
\alpha(y_1(\xi,\tau(\xi))) = 0,\qquad -1\leq\xi\leq 1,
\end{equation}
and hence we have for $2\leq t < 4$ that
\begin{align}
y_2(\xi,t) &=
	\begin{cases}
	\xi+t-\frac{3}{16}t^2, &\xi\leq -1,\\
	\frac 12(\xi-1)-\frac 12(\xi-1)t+\frac 18(\xi-\frac 12)t^2, &-1\leq\xi\leq 1,\\
	\frac 45(\xi-1)-\frac 25(\xi-1)t+\frac 1{20}(\xi+\frac 14)t^2, &1\leq\xi\leq \frac 72,\\
	\xi-\frac 32-t+\frac 3{16}t^2, &\frac 72\leq\xi,
	\end{cases}\\
U_2(\xi,t) &=
	\begin{cases}
	1-\frac 38t, &\xi\leq -1,\\
	-\frac 12(\xi-1)+\frac 14(\xi-\frac 12)t, &-1\leq\xi\leq 1,\\
	-\frac 25(\xi-1)+\frac 1{10}(\xi+\frac 14)t, &1\leq\xi\leq \frac 72,\\
	-1+\frac 38t, &\frac 72\leq\xi,
	\end{cases}\\
H_2(\xi,t) &=
	\begin{cases}
	0, &\xi\leq -1,\\
	\frac 12(\xi+1), &-1\leq\xi\leq 1,\\
	\frac 15(\xi+4), &1\leq\xi\leq \frac 72,\\
	\frac 32, &\frac 72\leq\xi,
	\end{cases}\\
V_2(t) &= H_0,\\
r_2(t) &= 0.
\end{align}
The amount dissipated at $t=4$ is given by
\begin{equation}
\alpha(y_1(\xi,\tau(\xi))) = \frac 15(\xi-1),\qquad 1\leq\xi\leq \frac 72.
\end{equation}
Thus, for $t\geq 4$,
\begin{align}
y_2(\xi,t) &=
	\begin{cases}
	\xi+1-\frac 12(t-4)-\frac{11}{64}(t-4)^2, &\xi\leq-1,\\
	\frac 12(\xi+1)+\frac 12\xi(t-4)+\frac 18(\xi-\frac{3}{8})(t-4)^2, &-1\leq\xi\leq 1,\\
	1+\frac 12(t-4)+\frac{1}{200}(\frac{37}8 + 12\xi-\xi^2)(t-4)^2, &1\leq\xi\leq \frac 72,\\
	\xi-\frac 52 +\frac 12(t-4)+\frac{11}{64}(t-4)^2, &\frac 72\leq\xi,
	\end{cases}\\
U_2(\xi,t) &=
	\begin{cases}
	-\frac 12-\frac{11}{32}(t-4), &\xi\leq -1,\\
	\frac 12\xi+\frac 14(\xi-\frac{3}{8})(t-4), &-1\leq\xi\leq 1,\\
	\frac 12 + \frac{1}{100}(\frac{37}8 + 12\xi-\xi^2)(t-4), &1\leq\xi\leq \frac 72,\\
	\frac 12+\frac{11}{32}(t-4), &\frac 72\leq\xi,
	\end{cases}\\
H_2(\xi,t) &=
	\begin{cases}
	0, &\xi\leq -1,\\
	\frac 12(\xi+1), &-1\leq\xi\leq 1,\\
	\frac 15(\xi+4), &1\leq\xi\leq \frac 72,\\
	\frac 32, &\frac 72\leq\xi,
	\end{cases}\\
V_2(\xi,t) &= 
	\begin{cases}
	0, &\xi\leq -1,\\
	\frac 12(\xi+1), &-1\leq\xi\leq 1,\\
	\frac 1{50}(39+12\xi-\xi^2), &1\leq\xi\leq \frac 72,\\
	\frac{11}{8}, &\frac 72\leq\xi,
	\end{cases}\\
r_2(t) &= 0.
\end{align}
Then
\begin{equation}
\alpha(y_2(\xi,\tau(\xi))) = 
	\begin{cases}
	\frac 1{16}, &-1\leq\xi\leq 1,\\
	\frac 14, &1\leq\xi\leq\frac 72,
	\end{cases}
\end{equation}
and $X_3(t)$ can be computed as follows. For $t<2$ we have $X_3(t) = X_2(t)$. For $2\leq t<4$ we have,
\begin{align}
y_3(\xi,t) &=
	\begin{cases}
	\xi+\frac 54 + \frac 14(t-2)-\frac{23}{128}(t-2)^2, &\xi\leq -1,\\
	\frac 14+\frac 14(t-2)+\frac 1{128}(15\xi-8)(t-2)^2, &-1\leq\xi\leq 1,\\
	\frac 15(\xi+\frac 14)-\frac 15(\xi-\frac 94)(t-2) + \frac 1{20}(\xi+\frac{3}{32})(t-2)^2, &1\leq\xi\leq \frac 72,\\
	\xi-\frac{11}{4}-\frac 14(t-2)+\frac{23}{128}(t-2)^2, &\frac 72\leq\xi,
	\end{cases}\\
U_3(\xi,t) &=
	\begin{cases}
	\frac 14-\frac{23}{64}(t-2), &\xi\leq -1,\\
	\frac 14+\frac 18(\frac{15}8\xi-1)(t-2), &-1\leq\xi\leq 1,\\
	-\frac 15(\xi-\frac 94)+\frac 1{10}(\xi+\frac{3}{32})(t-2), &1\leq\xi\leq \frac 72,\\
	-\frac 14+\frac{23}{64}(t-2), &\frac 72\leq\xi,
	\end{cases}\\
H_3(\xi,t) &=
	\begin{cases}
	0, &\xi\leq -1,\\
	\frac 12(\xi+1), &-1\leq\xi\leq 1,\\
	\frac 15(\xi+4), &1\leq\xi\leq \frac 72,\\
	\frac 32, &\frac 72\leq\xi,
	\end{cases}\\
V_3(\xi,t) &= 
	\begin{cases}
	0, &\xi\leq -1,\\
	\frac {15}{32}(\xi+1), &-1\leq\xi\leq 1,\\
	\frac 15\xi+\frac{59}{80}, &1\leq\xi\leq \frac 72,\\
	\frac{23}{16}, &\frac 72\leq\xi,
	\end{cases}\\
r_3(t) &= 0.
\end{align}
At $t=4$ there is wave breaking for $1<\xi<\frac 72$, and one fourth of the energy is dissipated. Thus for $t\geq 4$ we have
\begin{align}
y_3(\xi,t) &=
	\begin{cases}
	\xi+\frac{33}{32}-\frac{15}{32}(t-4)-\frac{21}{128}(t-4)^2, &\xi\leq -1,\\
	\frac{15}{32}\xi+\frac 12 + \frac{15}{32}\xi(t-4)+\frac 1{128}(15\xi-6)(t-4)^2, &-1\leq\xi\leq 1,\\
	\frac{31}{32}+\frac{15}{32}(t-4)+\frac 3{80}(\xi+\frac{7}8)(t-4)^2, &1\leq\xi\leq \frac 72,\\
	\xi-\frac{81}{32}+\frac{15}{32}(t-4)+\frac{21}{128}(t-4)^2, &\frac 72\leq\xi,
	\end{cases}\\
U_3(\xi,t) &=
	\begin{cases}
	-\frac{15}{32}-\frac{21}{64}(t-4), &\xi\leq -1,\\
	\frac{15}{32}\xi+\frac 1{64}(15\xi-6)(t-4), &-1\leq\xi\leq 1,\\
	\frac{15}{32}+\frac 3{40}(\xi+\frac{7}8)(t-4), &1\leq\xi\leq \frac 72,\\
	\frac{15}{32}+\frac{21}{64}(t-4), &\frac 72\leq\xi,
	\end{cases}\\
H_3(\xi,t) &=
	\begin{cases}
	0, &\xi\leq -1,\\
	\frac 12(\xi+1), &-1\leq\xi\leq 1,\\
	\frac 15(\xi+4), &1\leq\xi\leq \frac 72,\\
	\frac 32, &\frac 72\leq\xi,
	\end{cases}\\
V_3(\xi,t) &= 
	\begin{cases}
	0, &\xi\leq -1,\\
	\frac {15}{32}(\xi+1), &-1\leq\xi\leq 1,\\
	\frac{3}{20}(\xi+\frac{21}{4}), &1\leq\xi\leq \frac 72,\\
	\frac{21}{16}, &\frac 72\leq\xi,
	\end{cases}\\
r_3(t) &= 0.
\end{align}
Then
\begin{equation}
\alpha(y_3(\xi,\tau(\xi))) = 
	\begin{cases}
	\frac 1{16}, &-1\leq\xi\leq 1,\\
	\frac{31}{128}, &1\leq\xi\leq\frac 72,
	\end{cases}
\end{equation}
and we have that $X_4(t) = X_3(t)$ for $t<4$. When $t\geq 4$, then
\begin{align}
y_4(\xi,t) &=
	\begin{cases}
	\xi+\frac{33}{32}-\frac{15}{32}(t-4)-\frac{337}{2048}(t-4)^2, &\xi\leq -1,\\
	\frac{15}{32}\xi+\frac 12 + \frac{15}{32}\xi(t-4)+\frac 1{2048}(240\xi-97)(t-4)^2, &-1\leq\xi\leq 1,\\
	\frac{31}{32}+\frac{15}{32}(t-4)+\frac 1{10240}(388\xi+327)(t-4)^2, &1\leq\xi\leq \frac 72,\\
	\xi-\frac{81}{32}+\frac{15}{32}(t-4)+\frac{337}{2048}(t-4)^2, &\frac 72\leq\xi,
	\end{cases}\\
U_4(\xi,t) &=
	\begin{cases}
	-\frac{15}{32}-\frac{337}{1024}(t-4), &\xi\leq -1,\\
	\frac{15}{32}\xi+\frac 1{1024}(240\xi-97)(t-4), &-1\leq\xi\leq 1,\\
	\frac{15}{32}+\frac1{5120}(388\xi+327)(t-4), &1\leq\xi\leq \frac 72,\\
	\frac{15}{32}+\frac{337}{1024}(t-4), &\frac 72\leq\xi,
	\end{cases}\\
H_4(\xi,t) &=
	\begin{cases}
	0, &\xi\leq -1,\\
	\frac 12(\xi+1), &-1\leq\xi\leq 1,\\
	\frac 15(\xi+4), &1\leq\xi\leq \frac 72,\\
	\frac 32, &\frac 72\leq\xi,
	\end{cases}\\
V_4(\xi,t) &= 
	\begin{cases}
	0, &\xi\leq -1,\\
	\frac {15}{32}(\xi+1), &-1\leq\xi\leq 1,\\
	\frac{1}{640}(97\xi+503), &1\leq\xi\leq \frac 72,\\
	\frac{337}{256}, &\frac 72\leq\xi,
	\end{cases}\\
r_4(t) &= 0.
\end{align}
Since $X_5(t) = X_4(t)$ for all $t>0$, the $\alpha$-dissipative solution with initial data $X_0$ at time $t$ is given by $X_5(t)$. In Figure \ref{figure iteration} $y_i(\xi,t)$ has been plotted for $i=1,2,3,4$ and $\xi=-1,1,\frac 72$. 
\begin{figure}
\includegraphics[scale=0.8  ]{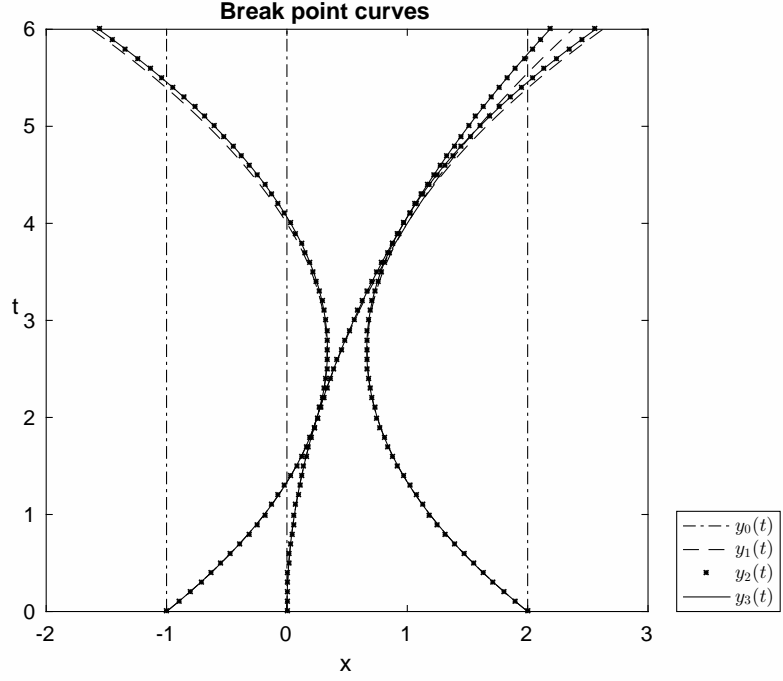}
\caption{A plot of characteristic curves $y_i(\xi,t), i=1,2,3,4$ in Example \ref{example iteration} corresponding to $\xi=-1$, $\xi=1$, and $\xi=\frac 72$ from left to right.}
\label{figure iteration}
\end{figure}
\end{example}

\begin{example}
\label{example degenerate nu}
In this example we demonstrate why it is necessary to restrict $alpha$ to $[0,1)$. Select $\alpha$ such that $\alpha(\frac 14) = 1$, and $\alpha(\frac 12) = \frac 12$. Let $(u_0,\rho_0,\nu_0,\mu_0)$ be given by $\rho_0=0$, $\nu_0=\mu_0= (\nu_0)_{ac}$, and
\begin{equation}
u_0(x) =
	\begin{cases}
	1, & x\leq -1,\\
	-x, & -1\leq x\leq 0,\\
	-\frac 12x, &0\leq x\leq 2,\\
	-1, &2\leq x.
	\end{cases}
\end{equation}
Then $X_0=L((u_0,\rho_0,\nu_0,\mu_0))$ is given by
\begin{align}
\label{equation example degenerate nu}
y_0(\xi) &=
	\begin{cases}
	\xi, &\xi\leq -1,\\
	\frac 12(\xi-1), &-1\leq\xi\leq 1,\\
	\frac 45(\xi-1), &1\leq\xi\leq \frac 72,\\
	\xi-\frac 32, &\frac 72\leq\xi,
	\end{cases}\\
U_0(\xi) &=
	\begin{cases}
	1, &\xi\leq -1,\\
	-\frac 12(\xi-1), &-1\leq\xi\leq 1,\\
	-\frac 25(\xi-1), &1\leq\xi\leq \frac 72,\\
	-1, &\frac 72\leq\xi,
	\end{cases}\\
H_0(\xi) &=
	\begin{cases}
	0, &\xi\leq -1,\\
	\frac 12(\xi+1), &-1\leq\xi\leq 1,\\
	\frac 15(\xi+4), &1\leq\xi\leq \frac 72,\\
	\frac 32, &\frac 72\leq\xi,
	\end{cases}\\
V_0 &= H_0,\\
r_0 &= 0.
\end{align}
We claim that $L\circ M\circ S_4(X_0) \neq \Pi\circ S_4(X_0)$. Note that $S_4(X_0)$ does not belong to the set $\F^\alpha$, since we choose an invalid $\alpha$. However the mappings $L$ and $M$ can still be applied in this more general case.  For $t=4$, $S_4(X_0)$ reads
\begin{align}
y(\xi,4) &=
	\begin{cases}
	\xi+\frac 32, &\xi\leq -1,\\
	\frac 12, &-1\leq\xi\leq \frac 72,\\
	\xi-3, &\frac 72\leq\xi,
	\end{cases}\\
U(\xi,4) &= 0,\\
H(\xi,4) &=
	\begin{cases}
	0, &\xi\leq -1,\\
	\frac 12(\xi+1), &-1\leq\xi\leq 1,\\
	\frac 15(\xi+4), &1\leq\xi\leq \frac 72,\\
	\frac 32, &\frac 72\leq\xi,
	\end{cases}\\
V(\xi,4) &=
	\begin{cases}
	0, &\xi\leq 1,\\
	\frac 1{10}(\xi-1), &1\leq\xi\leq \frac 72,\\
	\frac 14, &\frac 72\leq\xi.
	\end{cases}
\end{align}
ad $\tilde X= \Pi\circ S_4(X_0)$ is given by 
\begin{align}
\tilde y(\xi) &=
	\begin{cases}
	\xi, &\xi\leq \frac 12,\\
	\frac 12, &\frac 12\leq \xi\leq 2,\\
	\xi-\frac 32, &2\leq \xi,
	\end{cases}\\
\tilde U(\xi) &= 0,\\
\tilde H(\xi) &=
	\begin{cases}
	0,&\xi\leq\frac 12,\\
	\xi-\frac 12, &\frac 12\leq \xi\leq 2,\\
	\frac 32, &2\leq \xi,
	\end{cases}\\
\tilde V(\xi) &= 
	\begin{cases}
	0,&\xi\leq\frac 32,\\
	\frac 12\xi-\frac 34, &\frac 32\leq \xi\leq 2,\\
	\frac 14, &2\leq \xi.
	\end{cases}
\end{align}
On the other hand let $\bar X = L\circ M\circ S_4(X_0)$, then we get
\begin{align}
\bar y(\xi) &=
	\begin{cases}
	\xi, &\xi\leq \frac 12,\\
	\frac 12, &\frac 12\leq \xi\leq 2,\\
	\xi-\frac 32, &2\leq \xi,
	\end{cases}\\
\bar U(\xi) &= 0,\\
\bar H(\xi) &=
	\begin{cases}
	0,&\xi\leq\frac 12,\\
	\xi-\frac 12, &\frac 12\leq \xi\leq 2,\\
	\frac 32, &2\leq \xi,
	\end{cases}\\
\bar V(\xi) &= 
	\begin{cases}
	0,&\xi\leq\frac 12,\\
	\frac 16\xi-\frac 1{12}, &\frac 12\leq \xi\leq 2,\\
	\frac 14, &2\leq \xi.
	\end{cases}
\end{align}
Since $\tilde V \neq \bar V$ we have that if we allow that $\alpha(x) = 1$ for some $x$ and $\alpha(\bar x) < 1$ for some other $\bar x$, then we cannot guarantee that $L\circ M = \Id_{\F_0}$. By passing to Eulerian coordinates and back some information about $V$ is lost. In Figure \ref{figure energy degenerate nu} the cumulative function for $\nu$ is plotted at $t=0,2,4$. We see that the jump at $t=4$ contains two jumps, one created at $t=2$, the other at $t=4$. Uniqueness is lost for $t>4$ since we are unable to separate the jumps in a systematic way. This can be seen from Figure \ref{figure energy comp degenerate nu} where cumulative functions for $\nu$ and $\bar\nu$ have been plotted at $t=6$. The measure $\nu$ is obtained by $M\circ S_6\circ L$, while $\bar\nu$ is obtained by $M\circ S_2\circ L\circ M\circ S_4\circ L$. It should be pointed out that non-uniquenss after wave breaking for $\nu$ does not affect $(u,\rho,\mu)$ in any way.

\begin{figure}
\includegraphics[scale=0.7]{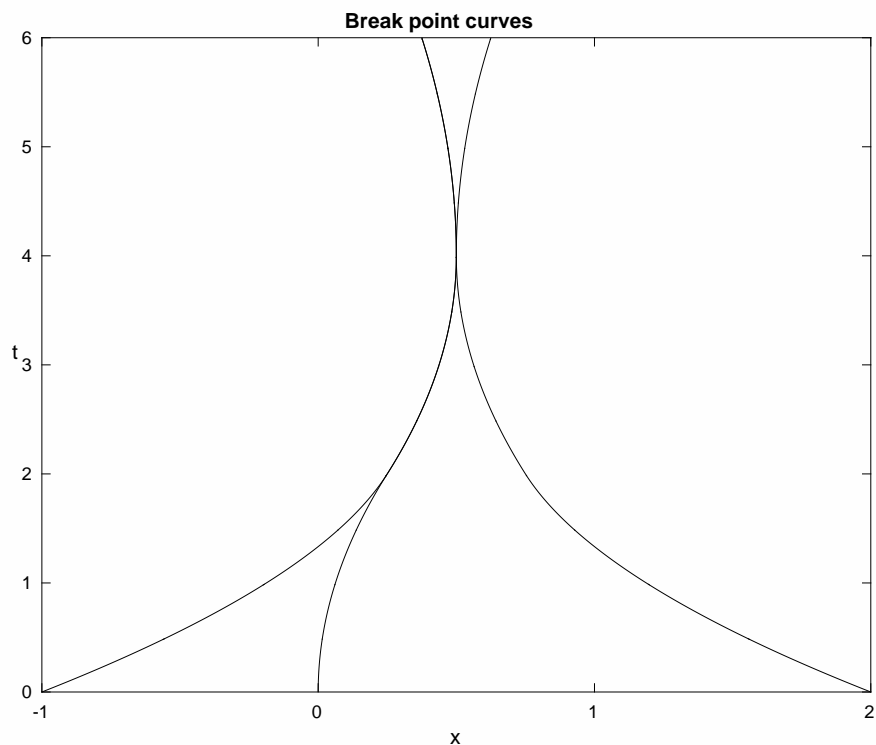}
\caption{A plot of characteristic curves $y(\xi,t)$ corresponding to $\xi=-1$, $\xi=1$, and $\xi=\frac 72$ in \eqref{equation example degenerate nu} in Example \ref{example degenerate nu} from left to right.}
\label{figure degenerate nu}
\end{figure}

\begin{figure}
\includegraphics[scale=0.8]{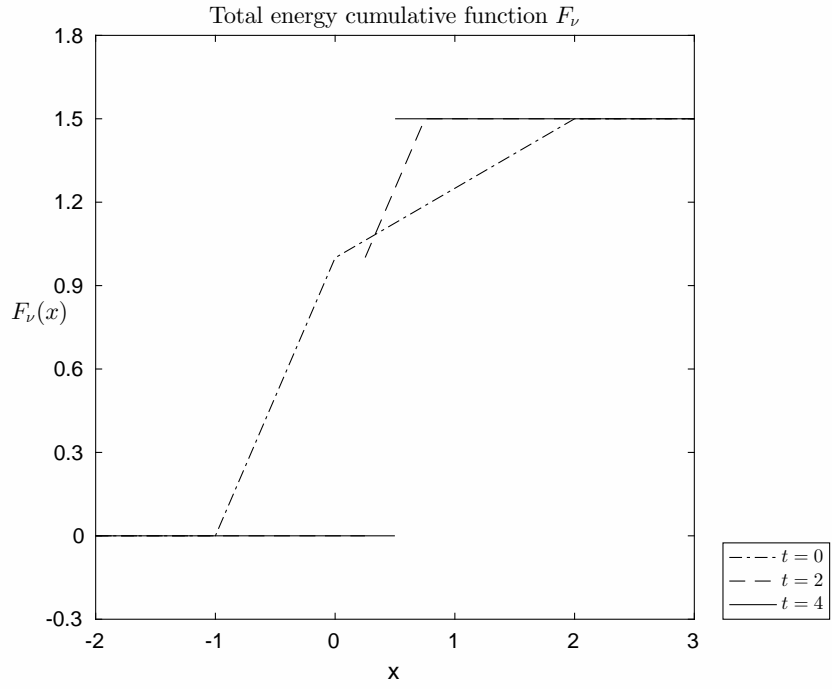}
\caption{A plot of the cumulative total energy function $F_\nu$ in Example \ref{example degenerate nu} at $t=0$, $t=2$, and $t=4$.}
\label{figure energy degenerate nu}
\end{figure}

\begin{figure}
\includegraphics[scale=0.8]{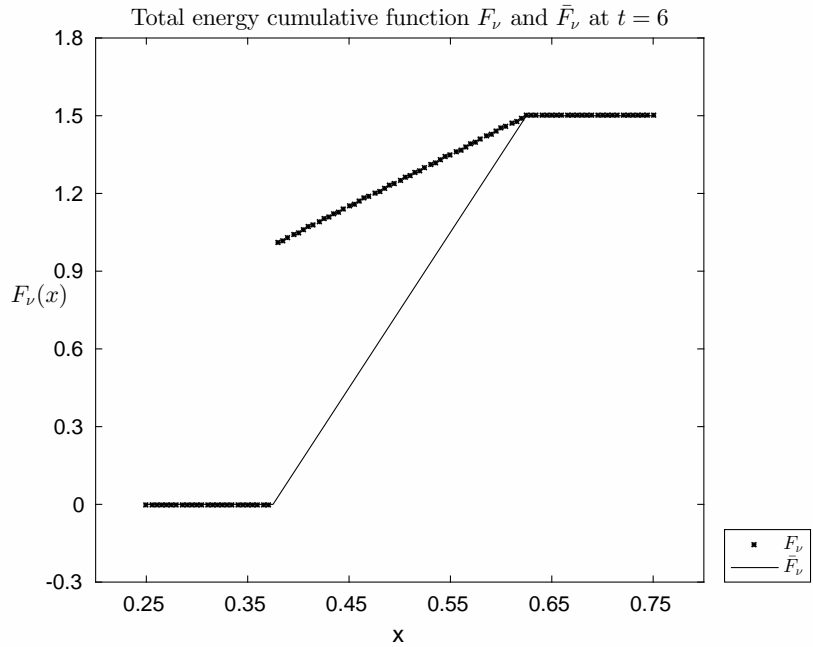}
\caption{A plot of the cumulative total energy functions $F_\nu$ and $F_{\bar\nu}$ from Example \ref{example degenerate nu} at $t=6$.}
\label{figure energy comp degenerate nu}
\end{figure}

\end{example}

\end{document}